\documentclass[12pt,a4paper,english,reqno]{amsart}

\usepackage[T1]{fontenc}
\usepackage[utf8]{inputenc}
\usepackage{lmodern}
\usepackage{microtype}
\usepackage{mathrsfs}
\usepackage{amssymb}
\usepackage{graphicx}
\usepackage{calrsfs}
\usepackage{tikz}
\usetikzlibrary{arrows.meta}

\usepackage{enumitem}
\usepackage[colorlinks=true,allcolors=blue]{hyperref}
\usepackage[nameinlink,noabbrev]{cleveref}

\theoremstyle{plain}
\newtheorem{theorem}{Theorem}[section]
\newtheorem{proposition}[theorem]{Proposition}
\newtheorem{lemma}[theorem]{Lemma}
\newtheorem{corollary}[theorem]{Corollary}

\theoremstyle{definition}
\newtheorem{definition}[theorem]{Definition}
\newtheorem{exm}[theorem]{Example}
\newtheorem{remark}[theorem]{Remark}
\newtheorem{Prob}[theorem]{Problem}

\newcommand{\kk}{\Bbbk}
\newcommand{\perf}{\operatorname{per}}
\newcommand{\Hom}{\operatorname{Hom}}
\newcommand{\End}{\operatorname{End}}
\newcommand{\RHom}{\operatorname{\mathbf{R}Hom}}
\newcommand{\thick}{\operatorname{thick}}
\newcommand{\Dyck}{\operatorname{Dyck}}
\newcommand{\gldim}{\operatorname{gldim}}
\newcommand{\add}{\operatorname{add}}
\newcommand{\op}{\mathrm{op}}
\newcommand{\id}{\mathrm{id}}
\newcommand{\C}{\mathcal C}
\newcommand{\X}{\mathcal X}
\newcommand{\G}{\Gamma}
\newcommand{\Gs}{\Gamma^{\sharp}}
\newcommand{\modu}{\operatorname{mod}}
\newcommand{\os}{\operatorname{os}}
\newcommand{\Wcat}{\mathcal W}
\newcommand{\Fcat}{\mathcal F}
\newcommand{\modcat}{\ensuremath{\mbox{\rm -mod}}}
\newcommand{\Ext}{\operatorname{Ext}}
\newcommand{\Db}[1]{{\mathscr D}^b(#1)}
\newcommand{\ovr}{\rm \overrightarrow}
\newcommand{\lra}{\longrightarrow}

\newcommand{\Alg}{\rm Alg}

\newcommand{\pmodcat}[1]{#1\mbox{{\rm -proj}}}
\newcommand{\Kb}[1]{{\mathscr K}^b(#1)}

\setlist{itemsep=2pt,topsep=4pt}
\allowdisplaybreaks
\title[Relative interval tilting and Dyck posets]
{Relative interval tilting, higher Auslander staircase corners and rational Dyck posets}
\author{Shengyong Pan}
\date{August 5, 2026}
\hypersetup{
  pdfauthor={Shengyong Pan},
  pdftitle={Relative Interval Tilting, Higher Auslander Staircase Corners, and Rational Dyck Posets},
  pdfsubject={Derived equivalences, staircase posets, rational Dyck paths, and higher Auslander algebras},
  pdfkeywords={interval tilting, higher Auslander algebra, rational Dyck path, derived equivalence, Fukaya category}
}
\begin{document}

\begin{abstract}
We construct an explicit tilting equivalence between the incidence algebra of
every rational Dyck staircase and a canonical idempotent corner of a higher
Auslander algebra of type~$A$. In the coprime case, this corner identifies
with the algebra $B_0$ introduced by Xing. The resulting Dyck-corner
equivalence supplies the missing link in the previously known chain of
equivalences and thereby proves the Chapoton-Ladkani-Rognerud conjecture
for coprime positive integers. The Dyck-corner equivalence itself requires
no coprimality hypothesis and is compatible with replicated algebras.

Our main tool is a linear-categorical extension of the interval-tilting
mechanism of Chapoton-Ladkani-Rognerud. The relative theorem applies to
finite $\kk$-linear categories under finite-global-dimension assumptions on
the total category and its fibers. In contrast with the incidence-category
setting, it allows arbitrary finite-dimensional $\Hom$ spaces and zero
composites of nonzero morphisms, and it does not require the diagonal
endomorphism algebras to be semisimple. The tilting object is constructed
from exact right Kan extensions of fiberwise representables. We compute its
opposite indexed endomorphism category, including all forced-zero
compositions, and hence its opposite endomorphism algebra. Iterating this
construction one coordinate at a time yields an explicit derived
equivalence between the incidence algebra of every finite coordinate
staircase and an idempotent corner of a higher Auslander algebra of type~$A$.
We further realize the resulting staircase derived categories as
triangulated subcategories generated by product Lagrangians in partially
wrapped Fukaya categories of stopped-disk symmetric products and, in the
coprime Dyck case, as Fukaya-Seidel categories of symmetric
Brieskorn--Pham singularities.

\end{abstract}

\subjclass[2020]{16G20, 18G80, 05E16, 53D37}
\keywords{derived equivalence, interval tilting, staircase poset, higher Auslander algebra, rational Dyck path, partially wrapped Fukaya category}
\address{School of Mathematics and Statistics, Beijing Jiaotong University,
Beijing, 100044, China.}
\address{Beijing Key Laboratory of Biological Big Data and Topological Statistics, Beijing Jiaotong University,
Beijing, 100044, China.}
\email{shypan@bjtu.edu.cn}
\maketitle

\section{Introduction}
Let $P$ be a finite poset.  Its incidence category $\kk P$ records the
order relation by one-dimensional morphism spaces, whereas related
representation-theoretic categories often carry additional commutativity and
zero relations.  A derived equivalence must therefore account not only for
objects and morphism spaces, but also for composites forced to vanish.  A
basic instance is the interval construction of Chapoton-Ladkani-Rognerud
\cite{CLR}.

Goguet obtains a different class of incidence-algebra equivalences from
$1$-APR tilting, flip-flops of torsion-class posets, and silting methods
\cite[Theorems~1 and~2]{Goguet}.  Here the posets are coordinate staircases
and rational Dyck posets, and the equivalences come from explicit
Kan-extension tilting objects.  The coordinatewise Dyck order used below is
not the Tamari or Cambrian order arising from torsion theory.

The motivating problem is \cite[Conjecture~3.7]{CLR}. If $n\in\mathbb N$, we denote by $\ovr{A_n}$ the set $\{1,\cdots ,n\}$. Unless specified otherwise, we see it with
the total order $1 <2 < \cdots < n$. For coprime
positive integers $a,b$, let $\mathcal L_{a,b}$ be the lattice of all north-east
paths in the $a\times b$ rectangle, and let 
$\Dyck^{\mathrm{above}}_{a,b}$ denote the poset of rational
$(a,b)$-Dyck paths staying weakly above the diagonal, ordered by their
horizontal-step coordinates.  Following the convention of
Chapoton-Ladkani-Rognerud, we write
\[
\Dyck_{a,b}
=
\Dyck^{\mathrm{above}}_{a,b}.
\]
The conjecture asserts
\begin{equation}
 \Db{\kk(\ovr A_{a+b}\times\Dyck_{a,b})} \simeq \Db{\kk \mathcal L_{a,b}}.
\label{eq:CLR-conj}
\end{equation}
Chapoton-Ladkani-Rognerud proved the case $a=2$.  The above- and
below-diagonal conventions, together with their coordinate descriptions, are
made explicit in Subsection \ref{subsec:dyck-coordinates}.

The known results bring the two sides of \eqref{eq:CLR-conj} close, but do not
identify them.  Gottesman relates the incidence algebra of the lattice of
order ideals of a rectangular grid to a higher Auslander algebra of type
$A$ \cite[Theorem~E]{Gottesman}.  For $\gcd(a,b)=1$, Xing relates the same
higher Auslander algebra to an $(a+b)$-replicated algebra $B_0^{(a+b)}$
\cite[Theorem~4.5 and Proposition~4.25]{Xing}, and realizes $B_0$ as a corner
of a lower-dimensional higher Auslander algebra
\cite[Proposition~4.33]{Xing}.  Although the vertices of this corner are
indexed by rational Dyck paths, that vertex identification alone does not
give a derived equivalence with the incidence algebra of the Dyck poset: the
multiplication, and in particular the forced-zero compositions, must still
be recovered.

The paper contributes four steps absent from the known chain.  First, we
extend the interval-tilting mechanism of
\cite[Theorem~3.1]{CLR} from incidence categories of finite posets to
arbitrary finite $\kk$-linear base categories satisfying explicit
homological hypotheses.  This strict extension allows higher-dimensional
$\Hom$ spaces, non-semisimple diagonal algebras, and zero composites of
nonzero morphisms, and it computes the indexed endomorphism category with
its multiplication.  Second, iteration by coordinates gives a tilting
equivalence from every finite staircase incidence category to its higher
Auslander corner, inserting one interlacing condition and its forced-zero
composites at each step.  Third, a forced-prefix deletion identifies rational
Dyck paths with such a staircase, and the resulting corner is proved to be
Xing's $B_0$, with the above/below convention and opposite algebra tracked
explicitly.  Fourth, the staircase and Dyck-corner equivalences require no
coprimality and are compatible with replication.  Only after these new
equivalences have been established do we invoke Ladkani, Xing, and Gottesman
to complete \eqref{eq:CLR-conj} in the coprime case.

We begin with the relative theorem.  Its novelty is the extension of the CLR
mechanism to finite linear categories with arbitrary finite-dimensional
morphism spaces, non-semisimple diagonal endomorphism algebras, and existing
forced-zero compositions; the Kan-extension idea itself originates in
\cite[Theorem~3.1]{CLR}.

\begin{theorem}[Theorem \ref{thm:relative}]
\label{thm:intro-relative}
Let $\X$ be a finite $\kk$-linear category and let $Y$ be a finite
poset.  Suppose that
\[
 F\colon Y\longrightarrow
 \{\text{full subcategories of }\X\},\qquad
 y\longmapsto F(y),
\]
is a family such that every $F(y)$ is a sieve and
\[
 y\leq y'\quad\Longrightarrow\quad
 F(y)\subseteq F(y').
\]
Define $\G=\G(\X,Y,F)$ to have objects
\[
 \operatorname{Ob}\G
 =\{(x,y):y\in Y,\ x\in F(y)\}
\]
and morphism spaces
\[
 \Hom_{\G}\bigl((x,y),(x',y')\bigr)
 =
 \begin{cases}
  \Hom_{\X}(x,x'),&y\leq y',\\
  0,&y\nleq y'.
 \end{cases}
\]
Its composition is induced by that of $\X$.  Define
$\Gs=\Gs(\X,Y,F)$ on the same objects by
\[
 \Hom_{\Gs}\bigl((x,y),(x',y')\bigr)
 =
 \begin{cases}
  \Hom_{\X}(x,x'),&
  y\leq y'\ \text{and}\ x'\in F(y),\\
  0,&\text{otherwise}.
 \end{cases}
\]
For composable morphisms
\[
 f\colon(x_0,y_0)\longrightarrow(x_1,y_1),
 \qquad
 g\colon(x_1,y_1)\longrightarrow(x_2,y_2),
\]
the composition in $\Gs$ is
\[
 g\star f=
 \begin{cases}
  gf,&x_2\in F(y_0),\\
  0,&x_2\notin F(y_0),
 \end{cases}
\]
where the first case is also interpreted as zero when $gf=0$ in
$\X$.  Assume that
\[
 \gldim\G(\X,Y,F)<\infty
 \qquad\text{and}\qquad
 \gldim F(y)<\infty\quad\text{for every }y\in Y.
\]
For $y\in Y$, let
\[
 \iota_y:F(y)\longrightarrow\G,\qquad x\longmapsto(x,y),
\]
and denote by $(\iota_y)_*$ the exact right Kan extension along
$\iota_y$.  Then $\star$ is associative, and the object
\[
 T=\bigoplus_{y\in Y}\ \bigoplus_{x\in F(y)}
 (\iota_y)_*\Hom_{F(y)}(x,-)
\]
is a tilting object in $\Db{\G(\X,Y,F)}$. Moreover, there is a canonical
isomorphism of linear categories
\[
 \mathcal E_T^{\op}\cong\Gs(\X,Y,F),
\]
where $\mathcal E_T$ is the indexed endomorphism category of the summands
$T_{x,y}=(\iota_y)_*\Hom_{F(y)}(x,-)$ of $T$, with the labels $(x,y)$
retained as its objects.
Equivalently, there is a canonical isomorphism of algebras
\[
 \End_{\Db{\G}}(T)^{\op}
 \cong\operatorname{Alg}\!\left(\Gs(\X,Y,F)\right),
\]
and consequently a triangle equivalence
\[
 \Db{\G(\X,Y,F)}
 \simeq
 \Db{\Gs(\X,Y,F)}.
\]
\end{theorem}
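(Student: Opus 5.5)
The plan is to compute everything from one adjunction, without ever writing down the Kan extension by a general limit formula. Modules over $\G$ are covariant functors $\G\to\modu\kk$. Write $\iota_y^*$ for restriction along $\iota_y$; its right adjoint is $(\iota_y)_*$.

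\emph{Step 1: the value of the Kan extension.} First I would check that $(\iota_y)_*$ is exact and compute it explicitly. The formula is
\[
(\iota_y)_*M\,(x',y')\cong\Hom_{F(y)}\bigl(\Hom_\G((x',y'),\iota_y-),M\bigr).
\]
Suppose $y'\le y$. Then $x'\in F(y')\subseteq F(y)$, and the functor $x\mapsto\Hom_\G((x',y'),(x,y))=\Hom_\X(x',x)$ on $F(y)$ is the representable $\Hom_{F(y)}(x',-)$. If $y'\nleq y$ this functor is $0$. In both cases it is projective, so $(\iota_y)_*$ is exact, and by Yoneda
\[
(\iota_y)_*M\,(x',y')=\begin{cases}M(x'),&y'\le y,\\0,&\text{otherwise}.\end{cases}
\]
In particular $T_{x,y}(x',y')=\Hom_\X(x,x')$ for $y'\le y$, and $T_{x,y}$ vanishes elsewhere.

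\emph{Step 2: Hom and Ext between summands.} Since $(\iota_{y'})_*$ is exact, it preserves injectives (as a right adjoint of an exact functor). Hence the adjunction derives to
\[
\RHom_\G\bigl(N,(\iota_{y'})_*M\bigr)\cong\RHom_{F(y')}\bigl(\iota_{y'}^*N,M\bigr).
\]
Take $N=T_{x,y}$. The restriction $\iota_{y'}^*T_{x,y}$ is $0$ unless $y'\le y$, in which case it is $\Hom_\X(x,-)|_{F(y')}$. If $x\in F(y')$ this is representable, hence projective, so all higher Ext vanish. By Yoneda,
\[
\Hom(T_{x,y},T_{x',y'})\cong\Hom_\X(x',x).
\]
If $x\notin F(y')$, the sieve property applies: a nonzero morphism $x\to z$ with $z\in F(y')$ would force $x\in F(y')$. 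So the restriction is $0$. Thus the only nonzero morphism spaces are $\Hom(T_{x,y},T_{x',y'})=\Hom_\X(x',x)$ with $y'\le y$ and $x\in F(y')$, and no higher extensions occur. This is exactly $\Hom_{\Gs}((x',y'),(x,y))$, reversed.

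\emph{Step 3: composition and associativity.} Next I would track composition through the adjunction isomorphisms. Composing $T_{x_2,y_2}\to T_{x_1,y_1}\to T_{x_0,y_0}$ corresponds to composing in $\X$, but the result is read in $\Hom(T_{x_2,y_2},T_{x_0,y_0})$. That space is zero unless $x_2\in F(y_0)$. This produces precisely the rule $g\star f$, including the forced zeros. It gives the isomorphism $\mathcal E_T^{\op}\cong\Gs$, and associativity of $\star$ then follows for free, since $\star$ is transported from an honest composition.

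\emph{Step 4: generation (the main obstacle).} It remains to show $\thick(T)=\Db{\G}$. Because $\gldim\G<\infty$, it suffices to show every module supported on a single fiber, i.e.\ the extension by zero $j_{y!}M$ of an $F(y)$-module $M$, lies in $\thick(T)$; the simples are then included. I would argue by induction on $y$ in a linear extension of $Y$.

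Because $\gldim F(y)<\infty$, $M$ is in the thick closure of the representables. Exactness of $(\iota_y)_*$ then puts $(\iota_y)_*M$ in $\thick(T)$. The module $(\iota_y)_*M$ is supported on the down-set of $y$. It has a submodule filtration whose top factor is $j_{y!}M$ and whose remaining factors are supported on the fibers $y'<y$; each of these is an extension of modules $j_{y'!}(M|_{F(y')})$. The key point is that $M|_{F(y')}$ is again an $F(y')$-module, of finite projective dimension there, so induction applies to it. Hence $j_{y!}M\in\thick(T)$.

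The delicate point is checking that the restriction maps of $(\iota_y)_*M$ really make this a filtration by $\G$-submodules. I would first try to do this via the order ideals $\{y'\le y\}\setminus\{\text{top part}\}$.

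Finally, since $T$ is a tilting object, the standard theorem gives $\Db{\G}\simeq\Db{\End(T)^{\op}}=\Db{\Gs}$.
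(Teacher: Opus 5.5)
Your proposal is correct. It differs from the paper mainly in how generation is proved, and less substantially in how associativity is obtained.

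\emph{Associativity.} The paper checks associativity of $\star$ directly (Lemma~\ref{lem:associativity}). You transport it from $\mathcal E_T^{\op}$, which is legitimate once compatibility with composition is actually proved. Your Step~3 only asserts that compatibility. The clean way to prove it is the paper's: the morphism attached to $f\in\Hom_{\X}(x,x')$ is the transformation $T_{x',y'}\to T_{x,y}$ with components $h\mapsto hf$ at every $(a,b)$ with $b\le y$, and zero elsewhere. In this form the adjunction identifications made at the different layers $y_0,y_1$ visibly agree, and $u_fu_g$ is either $u_{gf}$ or lands in a zero space.

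\emph{Generation.} The paper observes $(\iota_y)_*I^{F(y)}_x\cong I^{\G}_{(x,y)}$ and resolves $I^{F(y)}_x$ by projectives using $\gldim F(y)<\infty$. It then invokes $\gldim\G<\infty$ to know that the injectives generate $\Db{\G}$. This is a short argument with no induction.

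Your d\'evissage over a linear extension of $Y$ is also valid, and the step you call delicate is routine. Nonzero morphisms of $\G$ never decrease the $Y$-coordinate, so restricting a module to an up-set of $Y$ gives a submodule. Hence $j_{y!}M$ sits in $(\iota_y)_*M$ as a submodule rather than a ``top factor''; this is harmless, since only the resulting triangles matter. The quotient is filtered by the modules $j_{y'!}(M|_{F(y')})$ with $y'<y$.

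Your route costs an induction, but it places every module in $\thick(T)$ using only $\gldim F(y)<\infty$. The reduction to single-fibre modules needs no global-dimension hypothesis at all. In your argument, $\gldim\G<\infty$ is needed only to make $T$ perfect. You never state this, but Rickard's theorem requires it.
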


\begin{corollary}[Corollary \ref{cor:directed-relative}]
\label{cor:intro-directed-relative}
In the setting of Theorem \ref{thm:intro-relative}, suppose that the objects of
$\X$ admit an ordering $x_1,\ldots,x_n$ such that
\[
 \Hom_{\X}(x_i,x_j)=0\qquad\text{for }i>j,
\]
and
\[
 \gldim\End_{\X}(x_i)<\infty\qquad\text{for every }i.
\]
Then $\G(\X,Y,F)$ and every $F(y)$ have finite global dimension.
Hence all conclusions of Theorem \ref{thm:intro-relative} hold.
\end{corollary}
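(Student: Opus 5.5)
The plan is to show that a directed ordering of objects with finite-global-dimension endomorphism algebras forces finite global dimension, and then to transport such an ordering to $\G(\X,Y,F)$ and to each $F(y)$.

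\textbf{Standard fact.} Let $\mathcal C$ be a finite $\kk$-linear category whose objects admit an ordering $c_1,\ldots,c_m$ with $\Hom_{\mathcal C}(c_i,c_j)=0$ for $i>j$. Then
\[
\gldim\mathcal C\le (m-1)+\sum_i \gldim\End(c_i),
\]
or a similar finite bound. To prove it:
\begin{enumerate}
\item Write $e$ for the idempotent of the last object $c_m$. Since $\Hom(c_i,c_m)$ may be nonzero but $\Hom(c_m,c_i)=0$ for $i<m$, the category algebra $A$ is a triangular matrix algebra
\[
A=\begin{pmatrix} A' & M\\ 0 & \End(c_m)\end{pmatrix},
\]
possibly after transposing conventions.
\item Here $A'$ is the category algebra of the full subcategory on $c_1,\ldots,c_{m-1}$, and $M$ is a bimodule.
\item For triangular matrix algebras there is the classical estimate
\[
\gldim A\le \gldim A'+\gldim \End(c_m)+1.
\]
Equivalently, $\Db{A}$ admits a semiorthogonal decomposition into $\Db{A'}$ and $\Db{\End(c_m)}$, since the idempotent ideal $AeA$ is stratifying.
\item Induction on $m$ then gives finiteness.
\end{enumerate}

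\textbf{Fibers $F(y)$.} Each $F(y)$ is a full subcategory of $\X$. The restricted ordering still satisfies $\Hom=0$ for $i>j$. Its endomorphism algebras are the $\End_{\X}(x_i)$, which have finite global dimension by hypothesis. The standard fact applies directly.

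\textbf{Total category $\G$.} Choose a linear extension $y_1,\ldots,y_r$ of $Y$. Order the objects $(x_i,y_s)$ lexicographically: first by $s$, then by $i$. Suppose $(x_i,y_s)$ comes after $(x_j,y_t)$. There are two cases.
\begin{itemize}
\item If $s>t$, then $y_s\nleq y_t$, so the Hom space from $(x_i,y_s)$ to $(x_j,y_t)$ vanishes by definition.
\item If $s=t$ and $i>j$, the Hom space is $\Hom_{\X}(x_i,x_j)=0$.
\end{itemize}
The endomorphism algebra of $(x_i,y_s)$ is $\End_{\X}(x_i)$, because $y_s\le y_s$. So the standard fact again gives $\gldim\G<\infty$.

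\textbf{Conclusion.} The hypotheses of Theorem \ref{thm:intro-relative} are now satisfied, and all its conclusions follow.

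\textbf{Main obstacle.} The only nontrivial input is the triangular-matrix bound when the diagonal algebras are not semisimple. I would cite the standard result for triangular matrix rings, e.g.\ $\gldim\begin{pmatrix}R&M\\0&S\end{pmatrix}\le\gldim R+\gldim S+1$ for finite-dimensional $R,S$ and finitely generated $M$. The right-versus-left module conventions must match the paper's functor conventions. One also has to check that $\Hom$ vanishing in one direction gives the upper-triangular shape in the chosen convention. Either orientation works, since the bound is symmetric.
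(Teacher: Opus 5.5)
Your proposal is correct and follows the paper's argument essentially step for step. The paper also proves finiteness of global dimension for such directed categories by splitting off the last object to get a triangular block algebra, citing Minamoto--Yamaura's criterion where you cite the classical bound $\gldim A\le\gldim A'+\gldim\End(c_m)+1$; it then applies this to each full fibre $F(y)$ and to $\G$, ordered via a linear extension of $Y$ combined with the given ordering of $\X$, and your explicit lexicographic check that this ordering is directed is exactly the verification the paper leaves implicit.
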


For a sequence $H=(H_1,\ldots,H_m)$, consider the staircase
\[
 \Omega(H)=
 \{(z_1,\ldots,z_m):1\leq z_1<\cdots<z_m,\ z_i\leq H_i\}.
\]
We regard $\Omega(H)$ as a poset under the coordinatewise order.  For
$0\leq j\leq m-1$, define a finite $\kk$-linear category
$\C_j(\Omega(H))$ as follows.  Its objects are the elements of
$\Omega(H)$.  For
\[
 x=(x_1,\ldots,x_m),\qquad y=(y_1,\ldots,y_m),
\]
introduce a basis symbol $f_{yx}:x\to y$, and put
\[
 \Hom_{\C_j(\Omega(H))}(x,y)=
 \begin{cases}
  \kk f_{yx},&
  \substack{x_i\leq y_i\text{ for every }1\leq i\leq m,\\
  y_i<x_{i+1}\text{ for every }1\leq i\leq j,}\\
  0,&\text{otherwise}.
 \end{cases}
\]
The second family of inequalities is empty when $j=0$.  The identity of
$x$ is $f_{xx}$.  For composable basis morphisms
\[
 x\xrightarrow{f_{yx}}y\xrightarrow{f_{zy}}z,
\]
define
\[
 f_{zy}f_{yx}=
 \begin{cases}
  f_{zx},&\Hom_{\C_j(\Omega(H))}(x,z)\neq0,\\
  0,&\Hom_{\C_j(\Omega(H))}(x,z)=0,
 \end{cases}
\]
and extend the composition $\kk$-bilinearly.  Thus a composite of two
nonzero basis morphisms is forced to vanish precisely when the outer pair
$(x,z)$ fails one of the required interlacing inequalities.  This
composition is associative (see Definition \ref{def:Cj}) and the verification
immediately following it.  For $j=0$, the interlacing condition is vacuous and
\[
 \C_0(\Omega(H))=\kk\Omega(H)
\]
is the incidence category of the coordinatewise staircase poset.  Passing
from $\C_{j-1}$ to $\C_j$ inserts the single additional inequality
$y_j<x_{j+1}$, together with the corresponding forced-zero composites.
The second main result follows by applying the relative theorem one
coordinate at a time.

\begin{theorem}[Theorem \ref{thm:staircase}]
\label{thm:intro-staircase}
For every finite staircase $\Omega(H)$, there are triangle equivalences
\[
\begin{aligned}
 \Db{\kk\Omega(H)}
 &=\Db{\C_0(\Omega(H))}
 \simeq\Db{\C_1(\Omega(H))}\simeq\cdots\simeq
 \Db{\C_{m-1}(\Omega(H))}.
\end{aligned}
\]
The category algebra of the last category is the full staircase corner of a
higher Auslander algebra of type $A$.
\end{theorem}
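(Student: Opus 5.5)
The plan is to obtain each step $\Db{\C_{j-1}(\Omega(H))}\simeq\Db{\C_j(\Omega(H))}$ from Theorem \ref{thm:intro-relative}, in the directed form of Corollary \ref{cor:intro-directed-relative}. For fixed $j\ge1$ we split coordinates as $z=(z',z_{j+1},z'')$, with the $(j+1)$-st coordinate as the poset coordinate $Y$ and the other coordinates forming the category $\X$.

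\emph{Choice of the triple.} Take $Y=\{1,\dots,H_{j+1}\}$ with its total order. Let $\X$ be the category on the tuples $x=(z_1,\dots,z_j,z_{j+2},\dots,z_m)$ that occur in $\Omega(H)$ for some value of $z_{j+1}$. Its morphisms $x\to x'$ are one-dimensional exactly when $z_i\le z'_i$ for all $i$ and $z'_i<z_{i+1}$ for $1\le i\le j-1$, and zero otherwise. Composition is by the same forced-zero rule as in $\C_{j-1}$. Put
\[
F(y)=\{x: z_j<y<z_{j+2}\}.
\]
We need to check three things about this choice.
\begin{enumerate}
\item $F(y)$ is a sieve. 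If $x'\to x$ is nonzero and $x\in F(y)$, then $z'_j\le z_j<y$. The upper constraint needs care: $y<z'_{j+2}$ need not follow from $z'_{j+2}\le z_{j+2}$.
\item $F$ is monotone in $y$.
\item $\G(\X,Y,F)\cong\C_{j-1}(\Omega(H))$. A morphism $(x,y)\to(x',y')$ in $\G$ requires $y\le y'$ and a nonzero morphism in $\X$, which is exactly the $\C_{j-1}$ condition on the full tuple.
\end{enumerate}
Item (1) can fail, which forces the setup to be reoriented. Either we use cosieves and dualize Theorem \ref{thm:intro-relative} by passing to opposite categories, or we encode only the lower constraint $z_j<y$ in $F$ and absorb $z_{j+1}<z_{j+2}$ into the indexing of $\X$. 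A cleaner option is to let the poset coordinate be $z_j$ rather than $z_{j+1}$, so that $F(y)$ is cut out by $x_{j+1}>y$. That is an upward-closed condition in the correct variance, and then the new interlacing inequality $y_j<x_{j+1}$ appears precisely in the $\Gs$ morphism condition $x'\in F(y)$. I would try this orientation first.

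\emph{Matching $\Gs$ with $\C_j$.} In $\Gs$, a morphism $(x,y)\to(x',y')$ is nonzero iff $y\le y'$, the $\X$-morphism is nonzero, and $x'\in F(y)$. With the orientation above, the last condition says $y'_{j+1}>y_j$, i.e.\ the new inequality of $\C_j$. The composition rule $g\star f=0$ exactly when $x_2\notin F(y_0)$ then matches the forced-zero rule of $\C_j$: the outer pair fails the new inequality. The earlier inequalities are already built into $\X$ and its composition. Note that $y_j<x_{j+1}$ is exactly the required $F$-membership, so $\X$ must carry the $\C_{j-1}$ interlacings among the remaining coordinates.

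\emph{Finiteness of global dimension.} Order the objects of $\X$ lexicographically, or by coordinate sum. Then all morphisms go up, so $\Hom_\X(x_i,x_j)=0$ for $i>j$, and each $\End_\X(x_i)=\kk$. Corollary \ref{cor:intro-directed-relative} therefore applies and gives $\Db{\C_{j-1}}\simeq\Db{\C_j}$. Chaining these equivalences for $j=1,\dots,m-1$ yields the first part of the theorem.

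\emph{Identification of $\C_{m-1}$.} For the last claim, recall that the $(m-1)$-Auslander algebra of type $A_n$ has a basis indexed by interlacing pairs $x_1\le y_1<x_2\le y_2<\cdots$, with zero relations exactly where interlacing fails, following Iyama and Oppermann--Thomas. The condition $x_i\le H_i$ cuts out an idempotent corner whose multiplication is the restricted one, and this is exactly the definition of $\C_{m-1}(\Omega(H))$.

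\emph{Main obstacle.} I expect the hardest part to be getting the variance and the choice of coordinate right, so that $F(y)$ is a genuine sieve, $F$ is monotone, and the $\Gs$ condition $x'\in F(y)$ reproduces exactly the new inequality $y_j<x_{j+1}$. All inherited forced zeros must be carried correctly by $\X$, and the staircase bounds $z_i\le H_i$ must not break the sieve property.
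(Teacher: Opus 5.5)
There is a genuine gap. The whole content of the inductive step is a triple $(\X,Y,F)$ that satisfies the hypotheses of Theorem~\ref{thm:relative} with $\G\cong\C_{j-1}$ and $\Gs\cong\C_j$, and none of your candidates provides one for general $j$.

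\textbf{Your first choice.} It fails, as you note. It is not even monotone, since $y'<z_{j+2}$ can fail for $y'>y$.

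\textbf{Your preferred repair.} Here $Y$ is the single coordinate $z_j$ and $F(y)=\{x: x_{j+1}>y\}$. This $F(y)$ is upward closed and shrinks as $y$ grows, so it is a cosieve and $F$ is antitone. To get the right objects you would also need $x_{j-1}<y$, which makes the condition two-sided again. In the undualized $\Gs$ that you then use, $x'\in F(y)$ says that the $(j+1)$-st coordinate of the target exceeds the $j$-th coordinate of the source. That is automatic ($y\le y'<x'_{j+1}$) and is not the new inequality $y_j<x_{j+1}$.

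More seriously, even after passing to opposite categories, a single-coordinate $Y$ cannot reproduce $\C_{j-1}$ for $j\ge 2$. The old inequality $y_{j-1}<x_j$ couples the $\X$-part of the target with the $Y$-part of the source. But $\Hom_{\G}$ depends only on $\Hom_{\X}$ of the $\X$-components and on the order in $Y$. For example, take $m=3$, $j=2$, $H=(2,3,5)$:
\begin{itemize}
\item $(1,2,5)\to(2,3,5)$ is zero in $\C_1$;
\item $(1,3,5)\to(2,3,5)$ is nonzero in $\C_1$;
\item yet both have $\X$-components $(1,5)\to(2,5)$ and $Y$-components $2\le 3$ and $3\le 3$.
\end{itemize}

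\textbf{The missing idea} is to split into the whole prefix and the whole tail. Your remark about absorbing $z_{j+1}<z_{j+2}$ points the right way, but that constraint involves the $Y$-coordinate. It can only be absorbed into $Y$, which forces $Y$ to be the entire tail. Concretely:
\begin{itemize}
\item Let $\X_j$ be the category on prefixes $p=(z_1,\ldots,z_j)$ with the already inserted interlacings $p'_i<p_{i+1}$ ($i<j$). These involve only the first $j$ coordinates.
\item Let $Y_j$ be the set of tails $t=(z_{j+1},\ldots,z_m)$ with the coordinatewise order. Then $z_{j+1}<\cdots<z_m$ is internal to $Y_j$, and the only coupling is $p_j<t_1$.
\item Take $F_j(t)=\{p: p_j<t_1\}$. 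This is a one-sided, downward closed, monotone family of sieves.
\end{itemize}
With this triple, $\G\cong\C_{j-1}$, and the $\Gs$ condition $p'\in F_j(t)$ is exactly $y_j<x_{j+1}$. This is the paper's choice, and your first triple essentially coincides with it when $j=m-1$.

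Your dualization idea can also be made to work, but only with $\X$ the whole tail and $Y$ the whole prefix, inserting the inequalities from right to left. The intermediate categories then differ from $\C_1,\ldots,\C_{m-2}$. With a correct triple, your global-dimension argument via Corollary~\ref{cor:directed-relative} and your identification of $\C_{m-1}$ with the staircase corner go through as in the paper.
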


The staircase equivalence is compatible with replication.  If
$B_\Omega$ denotes the higher Auslander corner associated with a finite
coordinate staircase $\Omega$, then, for every $r\geq1$,
\begin{equation}
 \Db{\kk\ovr A_r\otimes\kk\Omega}\simeq\Db{B_\Omega^{(r)}}.
\label{eq:intro-replicated-staircase}
\end{equation}
This statement requires neither a rational boundary nor a coprimality
hypothesis.  Its proof combines the explicit staircase tilting complex with
Ladkani's theorem.  For the rational $(a,b)$-Dyck staircase in Xing's
convention, assume first that $a,b\geq2$, and put
\[
 s=\left\lceil\frac ab\right\rceil,\qquad m=a-s.
\]
After deleting the first $s$ forced coordinates, the Dyck paths are
identified with
\[
 \Omega_{a,b}=
 \left\{z_1<\cdots<z_m:
 z_j\leq j+\left\lfloor\frac{b(s+j-1)}a\right\rfloor
 \right\}.
\]
Xing's corner $B_0$ is precisely the category algebra
$\operatorname{Alg}(\C_{m-1}(\Omega_{a,b}))$. Consequently,
\begin{equation}
 \Db{\kk\Dyck^{\mathrm{below}}_{a,b}}
 \simeq\Db{B_0}.
\label{eq:dyck-B0-intro}
\end{equation}
If $a=1$ or $b=1$, the rational Dyck poset is a singleton and its
incidence category is $\kk$.  We treat this degenerate case directly
and do not encode it by the undefined symbols
$\C_{-1}$, $A_{b+1}^{0}$, or $\operatorname{os}_0(N)$.
Returning to $a,b\ge 2$, the convention in \cite{CLR} uses paths above the
diagonal.  A half-turn of
the rectangle identifies the above-diagonal poset with the opposite of the
below-diagonal poset, so the corresponding statement is
\begin{equation}
 \Db{\kk\Dyck^{\mathrm{above}}_{a,b}}
 \simeq\Db{B_0^{\op}}.
\label{eq:above-B0-intro}
\end{equation}
This opposite is essential: the two rational Dyck posets need not be
isomorphic.

The equivalence \eqref{eq:above-B0-intro} is the step that was absent from the
previously known chain.  After it has been established, Ladkani's
replicated-algebra equivalence \cite[Corollary~1.3]{Ladkani}, Xing's
equivalence, and Gottesman's theorem identify its two ends and yield
\eqref{eq:CLR-conj}.

\begin{theorem}[Theorem \ref{thm:CLR-conjecture}]
\label{thm:intro-conjecture}
Let $a,b$ be coprime positive integers and let $\kk$ be a field.
Then
\[
 \Db{\kk(\ovr A_{a+b}\times\Dyck_{a,b})}\simeq \Db{\kk L_{a,b}}.
\]
\end{theorem}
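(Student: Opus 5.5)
We prove Theorem \ref{thm:intro-conjecture} by concatenating derived equivalences. Gottesman and Xing supply the two outer links; the new link is the Dyck-corner equivalence \eqref{eq:above-B0-intro}, upgraded to its replicated form.

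First we dispose of the degenerate case $a=1$ or $b=1$. Then $\Dyck_{a,b}$ is a singleton, so $\kk(\ovr A_{a+b}\times\Dyck_{a,b})\cong\kk\ovr A_{a+b}$. The lattice $L_{a,b}$ of north-east paths in a $1\times b$ (or $a\times 1$) rectangle is a chain with $b+1=a+b$ (resp.\ $a+1=a+b$) elements. Hence the two incidence algebras are isomorphic and there is nothing to prove. From now on we assume $a,b\ge2$ and $\gcd(a,b)=1$.

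Next we write $\kk(\ovr A_{a+b}\times\Dyck_{a,b})\cong\kk\ovr A_{a+b}\otimes\kk\Dyck^{\mathrm{above}}_{a,b}$.
\begin{itemize}
\item By \eqref{eq:above-B0-intro} we have $\Db{\kk\Dyck^{\mathrm{above}}_{a,b}}\simeq\Db{B_0^{\op}}$. This equivalence is realized by an explicit tilting complex: take the one from Theorem \ref{thm:intro-staircase}, transported along the forced-prefix identification and the half-turn.
\item Ladkani's result \cite[Corollary~1.3]{Ladkani} says that a derived equivalence of this kind tensors with $\kk\ovr A_r$ to give a derived equivalence between $\kk\ovr A_r\otimes\kk\Dyck^{\mathrm{above}}_{a,b}$ and the $r$-replicated algebra of $B_0^{\op}$. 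This is exactly the mechanism behind \eqref{eq:intro-replicated-staircase}.
\item Taking $r=a+b$ gives $\Db{\kk(\ovr A_{a+b}\times\Dyck_{a,b})}\simeq\Db{(B_0^{\op})^{(a+b)}}$.
\end{itemize}
We must then reconcile $(B_0^{\op})^{(a+b)}$ with $(B_0^{(a+b)})^{\op}$, or with $B_0^{(a+b)}$ directly. For this we use that $\kk\ovr A_r\cong(\kk\ovr A_r)^{\op}$ after reversing the chain. Passing to opposite algebras preserves derived equivalence, since $\Db{A^{\op}}\simeq\Db{A}^{\op}$ via the duality $D$ and tilting complexes correspond.

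Finally we invoke Xing \cite[Theorem~4.5, Proposition~4.25]{Xing}: for coprime $a,b$, $B_0^{(a+b)}$ is derived equivalent to the relevant higher Auslander algebra of type $A$. Gottesman \cite[Theorem~E]{Gottesman} identifies that higher Auslander algebra, up to derived equivalence, with $\kk L_{a,b}$. Here $L_{a,b}\cong\mathcal L_{a,b}$ is the lattice of order ideals of an $a\times b$ grid, equivalently the lattice of north-east paths. This lattice is self-dual, so opposite conventions on this side do no harm. Composing all the links yields the theorem.

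The main obstacle is bookkeeping of conventions rather than new mathematics. One must check that Xing's $B_0$ matches our corner $\operatorname{Alg}(\C_{m-1}(\Omega_{a,b}))$ in the below-diagonal convention; this is asserted before \eqref{eq:dyck-B0-intro}. One must also check that the opposite appearing in \eqref{eq:above-B0-intro} is absorbed correctly in the replicated algebra. I would handle the latter by showing $(A^{\op})^{(r)}\cong(A^{(r)})^{\op}$, reversing the block-index order in the replicated algebra. Combined with self-duality of $L_{a,b}$ and of the higher Auslander algebra of type $A$ (via its canonical anti-automorphism), this makes both ends independent of the opposite.
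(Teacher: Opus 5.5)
Your proposal is correct and follows essentially the same chain as the paper. You tensor the Dyck-corner tilting complex for $\kk\Dyck^{\mathrm{above}}_{a,b}\simeq_{\mathrm{der}}B_0^{\op}$ with $\kk{\ovr A_{a+b}}$, apply Ladkani's replicated-algebra equivalence, and use $(B_0^{\op})^{(r)}\cong(B_0^{(r)})^{\op}$; then comes Xing's equivalence, self-oppositeness of $A^{a}_{b+1}$ (your appeal to self-duality of $L_{a,b}$ would do equally well), and Gottesman's Theorem~E. The points you leave implicit are these: the tensoring step is Rickard's tensor lemma rather than Ladkani's; Ladkani's theorem needs $B_0^{\op}$ to be Gorenstein, which holds because it has finite global dimension (e.g.\ by derived equivalence with an incidence algebra); and Xing's $A^{a}_{b+1}$ must be identified with Gottesman's $A^{a-1}_{b+1}$ by index conversion.
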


Using higher Auslander--Fukaya correspondences, we also realize the staircase
categories as Lagrangian-generated subcategories of partially wrapped Fukaya
categories of stopped-disk symmetric products and, in the coprime Dyck case,
as Fukaya--Seidel categories of symmetric Brieskorn--Pham singularities.

The paper is organized as follows.  We fix the module, Kan-extension, and
higher Auslander conventions in Section \ref{sec:preliminaries}.  The relative
interval-tilting theorem is proved in Section \ref{sec:relative}, and its iterated
staircase form is established in Section \ref{sec:staircase}.  Rational Dyck
staircases and Xing's corner are treated in Section \ref{sec:dyck}.  The
Chapoton-Ladkani-Rognerud conjecture is proved in Section \ref{sec:conjecture}.
The Fukaya-categorical interpretations and explicit examples occupy Sections \ref{sec:fukaya} and \ref{sec:examples}.  General staircase truncations and the
non-coprime Dyck-corner consequences are collected in Section 
\ref{sec:staircase-consequences}.

\section*{Acknowledgments}

This work was supported by the Beijing Natural Science Foundation (Grant Nos. 1262017, 1252011). The author gratefully acknowledges the assistance of ChatGPT-5.6 Sol in computing the examples.

For many years, the fundamental axiom of my mathematical creed has been my faithful adjoint companion—not a trivial tautology, but a dynamic invariant that has enriched the derived category of my curiosity and guided me through the intricate quiver of algebra representation theory. It has served as a stable base when my proofs hit obstructions, and as a canonical lift when flashes of insight revealed the natural isomorphism.

I am deeply indebted to all who supplied morphisms of counsel, counits of encouragement, and natural transformations of support—whether your name appears in the explicit presentation of these acknowledgements or remains in the kernel of memory, your contributions form the essential cohomology of my journey. To my family, whose enduring patience and unwavering support constitute the ground field of my life, I owe a debt whose magnitude transcends any finite-dimensional vector space. No bounded complex of words can do justice to the infinite-dimensional gratitude I hold for you.

\section{Preliminaries and conventions}
\label{sec:preliminaries}

\subsection{Finite linear categories and modules}

Throughout, $\kk$ is a field. By a finite $\kk$-linear category we
mean a $\kk$-linear category with finitely many objects and with
$\dim_{\kk}\Hom_{\mathcal A}(x,y)<\infty$ for every pair of objects
$x,y$. We distinguish such a category from its category algebra, denoted
$\operatorname{Alg}(\mathcal A)$ below. If $\mathcal A$ is such a category,
$\mathcal A\modcat$ denotes the category of finite-dimensional
covariant $\kk$-linear functors
\[
 \mathcal A\longrightarrow \kk\modcat.
\]
The standard projective and injective modules at $x\in\mathcal A$ are
\[
 P_x=\Hom_{\mathcal A}(x,-),\qquad
 I_x=D\Hom_{\mathcal A}(-,x),
\]
where $D=\Hom_{\kk}(-,\kk)$.  They are indecomposable when
$\End_{\mathcal A}(x)=\kk$.  More generally, the representables are
projective and the corepresentables are injective without any semisimplicity
assumption; they need not be indecomposable when the corresponding diagonal
endomorphism algebra is not local.  We write
$\Db{\mathcal A}=\Db{\mathcal A\modcat}$.

For a finite $\kk$-linear category $\mathcal A$, denote its category
algebra by
\begin{equation}
 \operatorname{Alg}(\mathcal A)
 =\bigoplus_{u,v\in\operatorname{Ob}\mathcal A} \Hom_{\mathcal A}(u,v),
\label{eq:category-algebra}
\end{equation}
with multiplication induced by composition and with the product of
noncomposable morphisms defined to be zero. Thus an isomorphism with a
finite-dimensional algebra refers to $\operatorname{Alg}(\mathcal A)$,
whereas an isomorphism of linear categories will be stated explicitly at the
categorical level.

\begin{definition}
Let $P$ be a finite poset. Its $\kk$-linear incidence category,
denoted by $\kk P$, has object set $P$ and morphism spaces
\[
\Hom_{\kk P}(x,y)=
\begin{cases}
\kk f_{yx},&x\leq y,\\
0,&x\nleq y.
\end{cases}
\]
Here $f_{yx}:x\to y$ denotes a fixed basis morphism. The identity
morphism of $x$ is $f_{xx}$, and composition is determined by
\[
f_{zy}f_{yx}=f_{zx}
\qquad\text{whenever }x\leq y\leq z,
\]
together with $\kk$-bilinearity.
Associativity follows from the transitivity of the partial order. Indeed,
if $w\leq x\leq y\leq z$, then
\[
(f_{zy}f_{yx})f_{xw}
=
f_{zx}f_{xw}
=
f_{zw},
\]
whereas $f_{zy}(f_{yx}f_{xw})=f_{zy}f_{yw}=f_{zw}$.
Therefore, $(f_{zy}f_{yx})f_{xw}=f_{zy}(f_{yx}f_{xw})$.
\end{definition}

The category algebra of the incidence category of a finite poset $P$ is
\[
A(P)
:=
\operatorname{Alg}(\kk P)
=
\bigoplus_{x,y\in P}
\operatorname{Hom}_{\kk P}(x,y).
\]
When no confusion can arise, this algebra is also denoted by $\kk P$
and is called the incidence algebra of $P$. It has the $\kk$-basis
\[
\{f_{yx}:x\leq y\},
\]
with multiplication $f_{zy}f_{yx}=f_{zx}\qquad\text{whenever }x\leq y\leq z$,
while the product of two basis morphisms that are not composable is zero.

For example, consider the three-element chain
\[
1<2<3.
\]
The nonzero basis morphisms in its incidence category are
\[
f_{11},\quad f_{22},\quad f_{33},\quad
f_{21},\quad f_{32},\quad f_{31},
\]
and $f_{32}f_{21}=f_{31}$.
Under the convention that matrix rows correspond to target objects and
matrix columns correspond to source objects, its category algebra is
isomorphic to the lower triangular matrix algebra
\[
\begin{pmatrix}
\Bbbk & 0     & 0\\
\Bbbk & \Bbbk & 0\\
\Bbbk & \Bbbk & \Bbbk
\end{pmatrix}.
\]

\begin{definition}
A finite $\kk$-linear category $\X$ is called
\emph{homologically directed} if its objects admit a total ordering such that
\[
 \Hom_{\X}(x_i,x_j)=0\quad\text{for }i>j,
\]
and
\[
 \gldim\End_{\X}(x_i)<\infty\quad\text{for every }i.
\]
The finite-dimensional spaces
$\Hom_{\X}(x_i,x_j)$, $i<j$, may have arbitrary dimension, and
compositions of nonzero morphisms are allowed to be zero.  The category is
called \emph{semisimple-directed} if, in addition,
$\End_{\X}(x_i)$ is semisimple for every $i$.  Thus every
semisimple-directed category is homologically directed.  A
\emph{directed Schur category} is a semisimple-directed category satisfying
\[
 \End_{\X}(x)=\kk,
 \qquad
 \dim_{\kk}\Hom_{\X}(x,x')\leq1.
\]
\end{definition}

\begin{lemma}
\label{lem:homologically-directed-global-dimension}
Every finite homologically directed category has finite global dimension.  The
same holds for each of its full subcategories.
\end{lemma}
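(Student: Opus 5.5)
We argue by induction on the number $n$ of objects, using the standard recollement/one-point-extension argument for triangular matrix algebras. Order the objects $x_1,\ldots,x_n$ so that $\Hom_{\X}(x_i,x_j)=0$ for $i>j$. Set $e=1_{x_n}$, viewed as an idempotent of $A=\operatorname{Alg}(\X)$, and put $A'=\operatorname{Alg}(\X')$, where $\X'$ is the full subcategory on $x_1,\ldots,x_{n-1}$. The vanishing of $\Hom_{\X}(x_n,x_j)$ for $j<n$ shows that $A$ is a triangular matrix algebra
\[
 A\cong\begin{pmatrix} A' & 0\\ M & R\end{pmatrix},\qquad R=\End_{\X}(x_n),\quad M=\bigoplus_{j<n}\Hom_{\X}(x_j,x_n),
\]
with $M$ an $R$-$A'$-bimodule. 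The orientation depends on the row/column convention, but either orientation works.

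The key input is the classical estimate for triangular matrix algebras $\Lambda=\begin{pmatrix} A'&0\\ M&R\end{pmatrix}$ with $M$ finite-dimensional:
\[
 \gldim\Lambda\le \gldim A'+\gldim R+1.
\]
This is a result of Palmer–Roos and Fossum–Griffith–Reiten. In our setting I would prove it directly. A $\Lambda$-module is a triple $(U,V,\phi)$ with $U$ an $A'$-module, $V$ an $R$-module and $\phi\colon M\otimes_{A'}U\to V$. The standard exact sequence relating $(U,V,\phi)$ to $(U,M\otimes U,\id)$ and $(0,V,0)$ gives the bound:
\begin{enumerate}
\item simple modules supported at $x_n$ are $R$-modules, and their projective dimension over $\Lambda$ equals their projective dimension over $R$, because $(0,R,0)$ is projective;
\item for a module $(U,M\otimes U,\id)$, a projective resolution of $U$ over $A'$ induces one over $\Lambda$, since projectives of the form $(P,M\otimes P,\id)$ are projective over $\Lambda$;
\item a general module is an extension of these two types, and the projective dimension of an extension is at most the maximum of the projective dimensions of its ends, up to a shift of $1$.
\end{enumerate}
So the only real work is checking that $e\Lambda$-side and $(1-e)\Lambda$-side projectives behave as claimed. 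Alternatively, one can cite the recollement $\Db{R}\to\Db{A}\to\Db{A'}$ induced by the idempotent $e$. This idempotent is stratifying because $AeA$ is projective: here $eA$ is just $R$ on the relevant side.

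Induction then gives $\gldim\operatorname{Alg}(\X)\le \sum_i\gldim\End_{\X}(x_i)+n-1<\infty$, with base case $n=1$ being the hypothesis on $\End_{\X}(x_1)$. For the statement about subcategories, note that every full subcategory of $\X$ inherits the ordering: the vanishing $\Hom(x_i,x_j)=0$ for $i>j$ and the endomorphism algebras are unchanged. So it is again finite and homologically directed, and the first part applies. Finally, $\X\modcat$ is equivalent to $\operatorname{Alg}(\X)$-mod by the usual identification of functors with modules via $M\mapsto\bigoplus_x M(x)$, so global dimensions agree.

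The main obstacle is the triangular-matrix estimate when $R$ is not semisimple and $M$ is not projective over $R$. The bound must not require $\operatorname{pd}_R M<\infty$ separately; this is automatic from $\gldim R<\infty$, and I would make that explicit in the extension argument.
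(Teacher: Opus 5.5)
Your overall strategy is the paper's. You induct on the number of objects, split off $x_n$ to get the lower triangular block form with diagonal blocks $A'$ and $R=\End_{\X}(x_n)$ and off-diagonal bimodule $M$, and apply a finite-global-dimension criterion for triangular matrix algebras. The paper cites Minamoto--Yamaura, Corollary~6.3, and the bound $\gldim\Lambda\le\gldim A'+\gldim R+1$ you quote is correct and serves equally well. If you simply cite that bound, your argument is complete. Your remarks on full subcategories and on identifying $\X\modcat$ with $\operatorname{Alg}(\X)$-modules are also fine.

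The self-contained verification you propose instead fails at step~(2). Applying $U\mapsto(U,M\otimes_{A'}U,\id)$, i.e.\ $\Lambda(1-e)\otimes_{A'}-$, to a projective resolution $P_\bullet\to U$ gives a complex of projective $\Lambda$-modules. Its homology in degree $i>0$ is $(0,\operatorname{Tor}_i^{A'}(M,U),0)$, so it is a resolution only when these groups vanish, and in general they do not.

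Example~\ref{ex:existing-zero-relation} already shows this. For $\X=\kk(0\to1\to2)/(\beta\alpha)$ you get the hereditary algebra $A'=\kk(0\to1)$, $R=\kk$, and $M=\Hom_{\X}(-,2)$ supported at $1$. For $U=S_0$ one has $M\otimes_{A'}S_0=0$ but $\operatorname{Tor}_1^{A'}(M,S_0)\cong\kk$. Accordingly, the induced map $P_1\to P_0$ is not injective at the object $2$. Moreover, $(U,M\otimes_{A'}U,\id)$ is just the simple $\X$-module $S_0$, whose minimal resolution $0\to P_2\to P_1\to P_0\to S_0\to0$ gives projective dimension $2>1=\gldim A'$. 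This contradicts the bound step~(2) would give. So the real difficulty is that $M\otimes_{A'}-$ is not exact, not the $R$-side issue you single out at the end.

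The repair is to work with $(U,0,0)$ instead.
\begin{enumerate}
\item For a projective $A'$-module $P$, the sequence $0\to(0,M\otimes_{A'}P,0)\to(P,M\otimes_{A'}P,\id)\to(P,0,0)\to0$ gives $\operatorname{pd}_\Lambda(P,0,0)\le\gldim R+1$.
\item Extension by zero $U\mapsto(U,0,0)$ is exact. An $A'$-resolution of $U$ of length at most $\gldim A'$ therefore yields $\operatorname{pd}_\Lambda(U,0,0)\le\gldim A'+\gldim R+1$.
\item The sequence $0\to(0,V,0)\to(U,V,\phi)\to(U,0,0)\to0$ then bounds the projective dimension of every $\Lambda$-module.
\end{enumerate}

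Your recollement alternative is sound in outline. With $e=1_{x_n}$ one has $\Lambda e\Lambda=e\Lambda$, which is projective as a right $\Lambda$-module, so $\Lambda\to\Lambda/\Lambda e\Lambda\cong A'$ is a homological epimorphism. It still requires the standard result that finite global dimension glues along such a recollement, so it is no shorter than citing the triangular bound.
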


\begin{proof}
We argue by induction on the number of objects.  For one object the assertion
is precisely the finite-global-dimension hypothesis on its endomorphism
algebra.  Suppose that the assertion holds for categories with fewer than
$n$ objects.  Let $x_1,\ldots,x_n$ be a directed ordering of the objects
of $\X$, let $\X'$ be the full subcategory on
$x_1,\ldots,x_{n-1}$, and put
\[
 B=\kk\X',\qquad A_n=\End_{\X}(x_n),\qquad
 M=\bigoplus_{i=1}^{n-1}\Hom_{\X}(x_i,x_n).
\]
Let
\[
e=e_{x_1}+\cdots+e_{x_{n-1}},
\qquad
f=e_{x_n},
\]
where $e_{x_i}=\id_{x_i}$ is the object idempotent. Then
$1=e+f$, and the Peirce decomposition gives
\[
\kk\X
\cong
\begin{pmatrix}
e(\kk\X)e & e(\kk\X)f\\
f(\kk\X)e & f(\kk\X)f
\end{pmatrix}.
\]
Our multiplication convention gives $e_{x_j}(\kk\X)e_{x_i}=\Hom_{\X}(x_i,x_j).
$
Consequently,$e(\kk\X)e=B, f(\kk\X)f=A_n, f(\kk\X)e=M$.
Moreover, $e(\kk\X)f=\bigoplus_{j=1}^{n-1}\Hom_{\X}(x_n,x_j)=0$,
because $n>j$ and the chosen ordering is directed. Therefore the
category algebra has the triangular block form
\[
\kk\X\cong
\begin{pmatrix}
B&0\\
M&A_n
\end{pmatrix}.
\]
Here $M$ is naturally an $A_n$--$B$-bimodule: the left action is
given by postcomposition with endomorphisms of $x_n$, and the right
action is given by precomposition with morphisms in $\X'$.
By induction,
$\gldim B<\infty$, while $\gldim A_n<\infty$ by assumption.  The
finite-global-dimension criterion for triangular matrix algebras
\cite[Corollary~6.3]{MinamotoYamaura} therefore gives
$\gldim\kk\X<\infty$.  This completes the induction.

A full subcategory inherits a directed ordering, and its diagonal
endomorphism algebras are among the algebras $\End_{\X}(x_i)$.  Applying
the same argument proves the final assertion.
\end{proof}

\subsection{Sieves}

\begin{definition}
Let $\X$ be a linear category.  A full subcategory $\mathcal S\subseteq\X$
is a \emph{sieve} if, whenever $s\in\mathcal S$ and
$\Hom_{\X}(x,s)\neq0$, one has $x\in\mathcal S$.
\end{definition}

For an ordinary poset regarded as a category, a full sieve is simply a lower
order ideal.  In a category with zero compositions, the definition only uses
the existence of individual nonzero morphisms.

\subsection{Tensor product categories}

If $Y$ is a finite poset, write $\kk Y$ for its incidence category.
The tensor product $\X\otimes\kk Y$ has objects $(x,y)$ and
\[
 \Hom_{\X\otimes\kk Y}((x,y),(x',y'))
 =
 \Hom_{\X}(x,x')\otimes_{\kk}\Hom_{\kk Y}(y,y').
\]
Thus this space is $\Hom_{\X}(x,x')$ when $y\leq y'$, and zero otherwise.

\subsection{Increasing sequences and higher Auslander categories}

For a positive integer $N$, write $[N]=\{1,2,\ldots,N\}$.
For positive integers $m$ and $N$, define
\[
\os_m(N)
=
\{(z_1,\ldots,z_m):
1\leq z_1<\cdots<z_m\leq N\}.
\]
Thus $\os_m(N)$ is the set of strictly increasing sequences of length
$m$ with entries in $[N]$. Equivalently, it is the set of $m$-element
subsets of $[N]$, written in increasing order. Hence
\[
|\os_m(N)|=\binom{N}{m}
\]
when $1\leq m\leq N$, while $\os_m(N)=\varnothing$ when $m>N$.

\begin{exm}

For $m=1$ and $N=4$, we have
\[
\os_1(4)=\{(1),(2),(3),(4)\}.
\]
For $m=2$ and $N=4$,
\[
\os_2(4)
=
\{(1,2),(1,3),(1,4),(2,3),(2,4),(3,4)\}.
\]
Similarly,
\[
\begin{split}
\os_3(5)=\{&
(1,2,3),(1,2,4),(1,2,5),(1,3,4),(1,3,5),\\
&
(1,4,5),(2,3,4),(2,3,5),(2,4,5),(3,4,5)
\}.
\end{split}
\]
For instance, $(1,3,5)\in\os_3(5)$ is the increasing representative
of the subset $\{1,3,5\}\subseteq[5]$.

For $x=(x_1,\ldots,x_m),\qquad
y=(y_1,\ldots,y_m)$
in $\os_m(N)$, write
\begin{equation}
x\preceq y
\quad\Longleftrightarrow\quad
x_1\leq y_1<x_2\leq y_2<\cdots<x_m\leq y_m.
\label{eq:interlace}
\end{equation}
Thus $x\preceq y$ means that the coordinates of $x$ and $y$
interlace. Equivalently, it requires $x_i\leq y_i
\qquad\text{for }1\leq i\leq m$,
together with $y_i<x_{i+1}
\qquad\text{for }1\leq i<m$.

For example, in $\os_2(5)$, $(1,3)\preceq(2,4)$,
because $1\leq2<3\leq4$.
On the other hand, $(1,3)\npreceq(3,5)$,
because the required middle inequality would be $3<3$.

The relation $\preceq$ describes the support of the morphism spaces
below, but it is not transitive in general. Indeed, in $\os_2(5)$, let
\[
x=(1,3),\qquad y=(2,4),\qquad z=(3,5).
\]
Then $x\preceq y
\qquad\text{and}\qquad
y\preceq z$,
since
\[
1\leq2<3\leq4
\qquad\text{and}\qquad
2\leq3<4\leq5.
\]
However, $x\npreceq z$,
because the required inequality $3<3$ fails. This failure of
transitivity is responsible for the forced-zero compositions appearing
in the higher Auslander category.
\end{exm}

\begin{remark}
The notation used here is the strict-coordinate form of the
ordered-sequence notation of Jasso and K\"ulshammer
\cite[Definition~1.9]{JassoKulshammer}. For a poset $P$, they write
\[
\operatorname{os}^{m}(P)
=
\{(u_1,\ldots,u_m)\in P^m:
u_1\leq\cdots\leq u_m\}.
\]
When $1\leq m\leq N$, the coordinate shift
\[
z_i=u_i+i-1
\qquad\text{for }1\leq i\leq m
\]
induces a bijection
\[
\operatorname{os}^{m}([N-m+1])
\xrightarrow{\sim}
\os_m(N).
\]
Its inverse is given by $u_i=z_i-i+1$.
Under this bijection, the weak interlacing condition
\[
u_1\leq v_1\leq u_2\leq v_2
\leq\cdots\leq u_m\leq v_m
\]
becomes the strict-coordinate condition
\[
z_1\leq w_1<z_2\leq w_2
<\cdots<z_m\leq w_m
\]
used in \eqref{eq:interlace}.
\end{remark}

\begin{definition}
The higher Auslander category of type $A$ with vertex set $\os_m(N)$
has morphism spaces
\begin{equation}
\Hom(x,y)
=
\begin{cases}
\kk f_{yx}, & x\preceq y,\\
0,              & x\npreceq y,
\end{cases}
\label{eq:higher-hom}
\end{equation}
where $f_{yx}\colon x\to y$ is a fixed basis morphism. For composable
basis morphisms
\[
x\xrightarrow{f_{yx}}y\xrightarrow{f_{zy}}z,
\]
composition is defined by
\begin{equation}
f_{zy}f_{yx}
=
\begin{cases}
f_{zx}, & x\preceq z,\\
0,      & x\npreceq z.
\end{cases}
\label{eq:higher-composition}
\end{equation}
In particular, for the three vertices
\[
x=(1,3),\qquad y=(2,4),\qquad z=(3,5)
\]
considered above, both $f_{yx}$ and $f_{zy}$ are nonzero, but $f_{zy}f_{yx}=0$
because $x\npreceq z$.
\end{definition}
Equivalently, this category is the path category of the quiver whose
arrows increment one coordinate, modulo the commutativity relations
associated with complete squares and the zero relations associated with
half-squares.
The higher Auslander algebras of type $A$ arise from Iyama's higher
Auslander--Reiten theory and higher-dimensional Auslander correspondence;
see \cite{Iyama2007,Iyama2011}. Their combinatorial description in terms of
interlacing sequences was developed by Oppermann and Thomas; see
\cite{OppermannThomas,JassoKulshammer}. We use Xing's notation
$A_n^m$: its vertex set is
\[
\os_m(n+m-1),
\]
and it is the $(m-1)$-Auslander algebra of type $A_n$
\cite[Section~4.1]{Xing}.

\begin{remark}
\label{rem:index-conversion}
Gottesman's $d$-Auslander algebra has vertices indexed by increasing
sequences of length $d+1$ \cite{Gottesman}. Consequently,
Gottesman's $A_n^{m-1}$ is Xing's $A_n^m$. We will display this index
conversion whenever the conventions of the two sources are used
together.
\end{remark}

\section{A relative interval-tilting theorem}
\label{sec:relative}

\subsection{The two categories}

Let $\X$ be a finite $\kk$-linear category, and let $Y$ be a
finite poset. Denote by $\operatorname{Sieve}(\X)$ the poset of full
sieves of $\X$, ordered by inclusion. Let
\[
F\colon Y\longrightarrow \operatorname{Sieve}(\X),
\qquad
y\longmapsto F(y),
\]
be an order-preserving map. Thus, for every $y\in Y$, the category
$F(y)$ is a full sieve of $\X$, and
\begin{equation}
y\leq y'
\quad\Longrightarrow\quad
F(y)\subseteq F(y')
\qquad
\text{for all }y,y'\in Y.
\label{eq:F-monotone}
\end{equation}

\begin{definition}
\label{def:Gamma}
Let $\G=\G(\X,Y,F)$ be the full subcategory of
$\X\otimes\kk Y$ whose object set is
\[
\operatorname{Ob}\G
=
\{(x,y):y\in Y,\ x\in\operatorname{Ob}F(y)\}.
\]
For any two objects $(x,y),(x',y')\in\operatorname{Ob}\G$,
that is, for $y,y'\in Y$, $x\in F(y)$, and $x'\in F(y')$, the
morphism space is
\[
\Hom_{\G}((x,y),(x',y'))
=
\begin{cases}
\Hom_{\X}(x,x'), & y\leq y',\\
0,                & y\nleq y'.
\end{cases}
\]
Composition is induced from the tensor-product category
$\X\otimes\kk Y$. Explicitly, if $(x,y)\xrightarrow{f}(x',y')
\xrightarrow{g}(x'',y'')$
are composable morphisms, then $y\leq y'\leq y''$, and their composite
is the morphism $gf\in\Hom_{\X}(x,x'')$
viewed as a morphism from $(x,y)$ to $(x'',y'')$ in $\G$.
\end{definition}
We now insert an additional ``target lies in the source fibre'' condition.

\begin{definition}
\label{def:Gamma-sharp}
The category $\Gs=\Gs(\X,Y,F)$ has the same objects as $\G$, and
\begin{equation}
 \Hom_{\Gs}((x,y),(x',y'))=
 \begin{cases}
 \Hom_{\X}(x,x'),&y\leq y'\text{ and }x'\in F(y),\\
 0,&\text{otherwise}.
 \end{cases}
\label{eq:Gamma-sharp-hom}
\end{equation}
For three objects $(x_0,y_0),\ (x_1,y_1),\ (x_2,y_2)\in\operatorname{Ob}\Gs$
and composable morphisms
$(x_0,y_0)\xrightarrow{f}(x_1,y_1)
\xrightarrow{g}(x_2,y_2)$,
define
\begin{equation}
g\star f=
\begin{cases}
gf, & x_2\in\operatorname{Ob}F(y_0),\\
0,  & x_2\notin\operatorname{Ob}F(y_0).
\end{cases}
\label{eq:sharp-composition}
\end{equation}
Here $F(y_0)$ is the full sieve of $\X$ assigned by $F$ to the
source layer $y_0$, and $x_2$ is the $\X$-component of the final
target object. Thus the first case means that the final target lies
in the sieve associated with the initial source layer. The product
$gf$ in the first line is the ordinary composite in $\X$ and is
understood to be zero if that composite vanishes in $\X$.
\end{definition}

\begin{lemma}
\label{lem:associativity}
The operation $\star$ is associative, so $\Gs$ is a well-defined
$\kk$-linear category.
\end{lemma}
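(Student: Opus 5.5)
The plan is to check associativity directly on homogeneous morphisms, using only two facts: monotonicity \eqref{eq:F-monotone} of $F$ and the sieve property of each $F(y)$. Before that I will check that $\star$ is well defined and unital.

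\emph{Well-definedness and units.} Let $f\colon(x_0,y_0)\to(x_1,y_1)$ and $g\colon(x_1,y_1)\to(x_2,y_2)$ be morphisms of $\Gs$. We may assume both lie in nonzero spaces, so $y_0\le y_1\le y_2$, $x_1\in F(y_0)$ and $x_2\in F(y_1)$. If $x_2\in F(y_0)$, then $gf\in\Hom_{\X}(x_0,x_2)$ is exactly $\Hom_{\Gs}((x_0,y_0),(x_2,y_2))$, since $y_0\le y_2$ and $x_2\in F(y_0)$. Otherwise that space is zero, and $\star$ returns $0$. So $\star$ lands in the correct space.

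Bilinearity is inherited from $\X$, because the case distinction depends only on objects. The identity of $(x,y)$ is $\id_x$, which lies in $\Hom_{\Gs}$ since $x\in F(y)$. For $f$ as above, $f\star\id_{x_0}$ uses the condition $x_1\in F(y_0)$, which holds. Likewise $\id_{x_1}\star f$ uses $x_1\in F(y_0)$, which holds. Hence both composites equal $f$.

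\emph{Associativity.} Take a third morphism $h\colon(x_2,y_2)\to(x_3,y_3)$. By bilinearity we may assume all three Hom spaces are nonzero, so $y_0\le y_1\le y_2\le y_3$, $x_1\in F(y_0)$, $x_2\in F(y_1)$ and $x_3\in F(y_2)$. I will compute both bracketings.

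First bracketing: $h\star g$ equals $hg$ if $x_3\in F(y_1)$ and $0$ otherwise. Then $(h\star g)\star f$ equals $(hg)f$ if additionally $x_3\in F(y_0)$. By \eqref{eq:F-monotone}, $F(y_0)\subseteq F(y_1)$, so the first condition is implied by the second. Hence
\[
(h\star g)\star f=\begin{cases}hgf,&x_3\in F(y_0),\\0,&\text{otherwise.}\end{cases}
\]

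Second bracketing: $h\star(g\star f)$ equals $h(gf)$ when both $x_2\in F(y_0)$ and $x_3\in F(y_0)$, and $0$ otherwise. The only possible discrepancy is the case $x_3\in F(y_0)$ but $x_2\notin F(y_0)$, where we need $hgf=0$.

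This case is the one real point of the argument, and the sieve hypothesis settles it. If $h\neq0$ in $\Hom_{\X}(x_2,x_3)$ and $x_3\in F(y_0)$, then $x_2\in F(y_0)$ because $F(y_0)$ is a sieve, which is a contradiction. So in this case $h=0$, and both bracketings vanish. Associativity of the underlying composition in $\X$ then finishes the argument, and $\Gs$ is a well-defined $\kk$-linear category.
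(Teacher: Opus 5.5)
Your proof is correct and follows essentially the same route as the paper. Both arguments reduce by bilinearity to three composable morphisms. Both use monotonicity of $F$ to see that the intermediate fibre test in $(h\star g)\star f$ is implied by $x_3\in F(y_0)$, and both use the sieve property of $F(y_0)$ to dispose of the case $x_2\notin F(y_0)$. Your extra checks that $\star$ lands in the correct morphism space and that the identities are units are not spelled out in the paper, but they are correct and complete the verification that $\Gamma^{\sharp}$ is a well-defined category.
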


\begin{proof}
Because $\star$ is bilinear, it is enough to consider three composable
nonzero morphisms
\[
 (x_0,y_0)\xrightarrow{f}(x_1,y_1)
 \xrightarrow{g}(x_2,y_2)
 \xrightarrow{h}(x_3,y_3).
\]
If $x_3\notin F(y_0)$, both bracketings vanish: in either bracketing the
result, if not already zero, is a morphism from layer $y_0$ to the object
$x_3$.  Suppose that $x_3\in F(y_0)$.  Since $F(y_0)$ is a sieve and
the displayed morphisms $g$ and $h$ are nonzero, it follows that $x_2,x_1\in F(y_0)$.
Moreover, \eqref{eq:F-monotone} implies $x_3\in F(y_1)$.  Hence no fibre
test kills an intermediate product.  Both bracketings are therefore equal to
$h(gf)=(hg)f$ in $\X$.  If an intermediate or total composition vanishes
in $\X$, associativity in $\X$ makes both bracketings zero.  This proves
associativity without any assumption on the dimensions of the Hom spaces.
\end{proof}

\subsection{Restriction and exact right Kan extension}

For $y\in Y$, let
\[
 \iota_y:F(y)\longrightarrow\G,\qquad x\longmapsto(x,y).
\]
This precomposition gives an exact restriction functor
\[
 \iota_y^*:\G\modcat\longrightarrow F(y)\modcat.
\]

\begin{lemma}
\label{lem:kan}
For each $y\in Y$, the functor $\iota_y^*$ has an exact right adjoint
\[
 (\iota_y)_*: F(y)\modcat\longrightarrow\G\modcat
\]
given on objects by
\begin{equation}
 ((\iota_y)_*M)(a,b)=
 \begin{cases}
 M(a),&b\leq y,\\
 0,&b\nleq y.
 \end{cases}
\label{eq:kan-formula}
\end{equation}
\end{lemma}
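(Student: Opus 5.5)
We first have to define $(\iota_y)_*M$ on morphisms, then show it is a $\G$-module, then check the adjunction $\Hom_{\G}(N,(\iota_y)_*M)\cong\Hom_{F(y)}(\iota_y^*N,M)$ naturally in $N$ and $M$; exactness will be read off from the formula.

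\textbf{Functoriality.} Let $\phi\in\Hom_{\G}((a,b),(a',b'))=\Hom_{\X}(a,a')$ with $b\le b'$. If $b'\le y$, then $b\le y$. We have $a'\in F(b')\subseteq F(y)$ by \eqref{eq:F-monotone}, and likewise $a\in F(y)$. So we set $((\iota_y)_*M)(\phi)=M(\phi)\colon M(a)\to M(a')$. If $b'\nleq y$, the target is $0$ and the map is $0$.

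Compatibility with composition holds for the following reason. For $(a,b)\to(a',b')\to(a'',b'')$ with $b''\le y$, all three layers lie below $y$, so we simply use functoriality of $M$ on the full subcategory $F(y)$. Otherwise the composite lands in $0$. The key input is that the set $\{b\le y\}$ is a lower set, so a target below $y$ forces the source below $y$. This is also where one must ensure that no nonzero map is required from a zero space into a nonzero one; that case never arises, since a source outside the support maps only to a target outside the support. Finite-dimensionality is clear.

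\textbf{Adjunction.} Given $\alpha\colon N\to(\iota_y)_*M$, take its components at the objects $(a,y)$; these give $\iota_y^*N\to M$. Conversely, given $\beta\colon\iota_y^*N\to M$, define $\tilde\beta_{(a,b)}$ for $b\le y$ as the composite
\[
N(a,b)\xrightarrow{N(\id_a\otimes f_{yb})}N(a,y)\xrightarrow{\beta_a}M(a).
\]
Here $(\id_a,\ b\le y)$ is a legitimate morphism $(a,b)\to(a,y)$ of $\G$ because $a\in F(b)\subseteq F(y)$. For $b\nleq y$ we set $\tilde\beta_{(a,b)}=0$.

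Naturality of $\tilde\beta$ along $\phi\colon(a,b)\to(a',b')$ with $b'\le y$ uses the factorization
\[
(\id_{a'},b'\le y)\circ(\phi,b\le b')=(\phi,b\le y)=(\phi,y\le y)\circ(\id_a,b\le y)
\]
in $\X\otimes\kk Y$, together with naturality of $\beta$. When $b'\nleq y$, naturality is trivial.

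The two constructions are mutually inverse. Restricting $\tilde\beta$ returns $\beta$. Conversely, any $\alpha$ is determined by its layer-$y$ components, by naturality along $(\id_a,b\le y)$. This reconstruction step is the main point to check carefully. Naturality of the bijection in $N$ and $M$ is routine.

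\textbf{Exactness.} Exactness of $(\iota_y)_*$ is immediate, since kernels and cokernels in functor categories are computed objectwise and the formula \eqref{eq:kan-formula} is objectwise either $M(a)$ or $0$. Exactness of $\iota_y^*$ holds for the same reason.

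I expect no serious obstacle. The only delicate point is to invoke the sieve and monotonicity hypotheses at the right moments, so that $(a,b)\mapsto(a,y)$ is a genuine morphism of $\G$ and the support $\{b\le y\}$ is closed downward.
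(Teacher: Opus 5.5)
Your proposal is correct and follows essentially the same route as the paper. Both arguments define the functor on morphisms using that $\{b\le y\}$ is downward closed, build the adjunction from restriction to layer $y$ and extension along the vertical morphisms $(a,b)\to(a,y)$ (with the same factorization giving naturality), and read off exactness pointwise; note only that this lemma uses monotonicity of $F$ and fullness of $F(y)$, not the sieve property, which enters later, e.g.\ in Lemma \ref{lem:restriction-kan}.
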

\begin{proof}
We first verify that the formula defines a $\G$-module. Let $(a,b)\in\operatorname{Ob}\G$.
If $b\leq y$, then $a\in F(b)\subseteq F(y)$
by the monotonicity of $F$. Hence $M(a)$ is defined.

Now let $h:(a,b)\longrightarrow(c,d)$ be nonzero only if
$b\leq d$, and its $\X$-component is a morphism
$ u:a\longrightarrow c$
in $\X$. Define
\[
((\iota_y)_*M)(h)
=
\begin{cases}
M(u), & d\leq y,\\
0,    & d\nleq y.
\end{cases}
\]
If $d\leq y$, then $b\leq d\leq y$, and therefore
\[
a\in F(b)\subseteq F(y),
\qquad
c\in F(d)\subseteq F(y).
\]
Since $F(y)$ is full in $\X$, the morphism $u:a\to c$ belongs to
$F(y)$, so $M(u)$ is defined.

The only apparently problematic case would be
\[
b\nleq y
\qquad\text{and}\qquad
d\leq y.
\]
This case cannot occur, because $b\leq d\leq y$ would imply
$b\leq y$. Thus the formula is well defined on every morphism.

It also respects identities. If $b\leq y$, the identity of $(a,b)$ is
sent to $M(\id_a)=\id_{M(a)}$.
If $b\nleq y$, the value at $(a,b)$ is the zero vector space, whose
identity morphism is the zero map.

To check composition, consider morphisms
\[
(a,b)\xrightarrow{h}(c,d)\xrightarrow{k}(e,r).
\]
If $r\leq y$, then $b\leq d\leq r\leq y$,
so all three objects lie in layers below $y$, and functoriality follows
from $M(kh)=M(k)M(h)$.
If $r\nleq y$, then the map associated with $kh$ is zero. The composite
of the two maps defined above is also zero, because the second map has
zero target. Therefore \eqref{eq:kan-formula} defines a covariant
$\G$-module.

We next prove the adjunction. Let $N\in\modu\G$ and
$M\in\modu F(y)$. Restriction to the layer $y$ defines a map
\[
\Phi:
\Hom_{\G}\bigl(N,(\iota_y)_*M\bigr)
\longrightarrow
\Hom_{F(y)}(\iota_y^*N,M).
\]
Explicitly, for a natural transformation
\[
\eta:N\longrightarrow(\iota_y)_*M,
\]
the component of $\Phi(\eta)$ at $a\in F(y)$ is
\[
\Phi(\eta)_a=\eta_{(a,y)}:
N(a,y)\longrightarrow M(a).
\]
Naturality of $\eta$ with respect to morphisms in the layer $y$ shows
that these components form a natural transformation $\iota_y^*N\longrightarrow M$.

Conversely, suppose that $\alpha:\iota_y^*N\longrightarrow M$
is a natural transformation. For every $(a,b)\in\operatorname{Ob}\G$
with $b\leq y$, let $v_{a,b}:(a,b)\longrightarrow(a,y)$
be the vertical morphism induced by $\id_a$. Define
\[
\Psi(\alpha)_{(a,b)}
=
\begin{cases}
\alpha_a\circ N(v_{a,b}), & b\leq y,\\
0,                         & b\nleq y.
\end{cases}
\]
We claim that these maps define a natural transformation
\[
\Psi(\alpha):N\longrightarrow(\iota_y)_*M.
\]

Indeed, let $h:(a,b)\longrightarrow(c,d)$
have $\X$-component $u:a\to c$. If $d\leq y$, then
$b\leq d\leq y$, and the following equality holds in $\G$:
\[
v_{c,d}\,h
=
\iota_y(u)\,v_{a,b}.
\]
Using this equality and the naturality of $\alpha$, we obtain
\[
\begin{split}
\Psi(\alpha)_{(c,d)}N(h)
&=
\alpha_cN(v_{c,d})N(h)\\
&=
\alpha_cN(\iota_y(u))N(v_{a,b})\\
&=
M(u)\alpha_aN(v_{a,b})\\
&=
((\iota_y)_*M)(h)\Psi(\alpha)_{(a,b)}.
\end{split}
\]
If $d\nleq y$, both sides of the required naturality identity are zero.
Thus $\Psi(\alpha)$ is natural.

Finally, $\Phi$ and $\Psi$ are inverse to each other. Since
$v_{a,y}=\id_{(a,y)}$, we have $\Phi(\Psi(\alpha))_a=\alpha_a$.

Conversely, if $\eta:N\to(\iota_y)_*M$ and $b\leq y$, naturality of
$\eta$ with respect to $v_{a,b}$ gives
\[
\eta_{(a,b)}
=
\eta_{(a,y)}N(v_{a,b}),
\]
because $(\iota_y)_*M$ sends $v_{a,b}$ to $\id_{M(a)}$. Hence $\Psi(\Phi(\eta))=\eta$.
We have therefore obtained a natural isomorphism
\[
\Hom_{\G}\bigl(N,(\iota_y)_*M\bigr)
\cong
\Hom_{F(y)}(\iota_y^*N,M),
\]
which proves that $(\iota_y)_*$ is right adjoint to $\iota_y^*$.

It remains to prove exactness. Kernels and cokernels in a functor
category are computed pointwise. By \eqref{eq:kan-formula}, evaluation
of $(\iota_y)_*$ at any object $(a,b)$ is either evaluation of $M$ at
$a$ or the zero functor. Both operations are exact. Therefore
$(\iota_y)_*$ is exact.
\end{proof}

\begin{lemma}
\label{lem:restriction-kan}
Let $y,y'\in Y$ and $x\in F(y)$, and denote by $P_x^{F(y)}=\Hom_{F(y)}(x,-)$.
Then
\[
\iota_{y'}^*(\iota_y)_*P_x^{F(y)}
\cong
\begin{cases}
P_x^{F(y)}\vert_{F(y')}, & y'\leq y,\\
0,                        & y'\nleq y.
\end{cases}
\]
More explicitly,
\begin{equation}
\iota_{y'}^*(\iota_y)_*P_x^{F(y)}
\cong
\begin{cases}
P_x^{F(y')},
  & y'\leq y\text{ and }x\in F(y'),\\
0,
  & \text{otherwise}.
\end{cases}
\label{eq:restriction-kan}
\end{equation}
\end{lemma}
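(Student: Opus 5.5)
Unwinding the definitions of restriction and of the exact right Kan extension in Lemma~\ref{lem:kan} gives the result directly. For $a\in F(y')$ we have
\[
(\iota_{y'}^*(\iota_y)_*P_x^{F(y)})(a)=((\iota_y)_*P_x^{F(y)})(a,y').
\]
By \eqref{eq:kan-formula}, this equals $P_x^{F(y)}(a)=\Hom_{F(y)}(x,a)$ if $y'\leq y$, and is $0$ otherwise. On morphisms, a morphism $u\colon a\to c$ in $F(y')$ is sent by $\iota_{y'}$ to the morphism $(a,y')\to(c,y')$ with $\X$-component $u$. The target layer is $y'$, so when $y'\leq y$ the formula in the proof of Lemma~\ref{lem:kan} sends it to $P_x^{F(y)}(u)$. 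Hence, when $y'\leq y$, the restriction is literally the functor $P_x^{F(y)}$ composed with the inclusion $F(y')\subseteq F(y)$, which makes sense because $F(y')\subseteq F(y)$ by \eqref{eq:F-monotone}. When $y'\nleq y$ the restriction is the zero module. This proves the first display.

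For the refined form \eqref{eq:restriction-kan}, assume $y'\leq y$. Since $F(y)$ and $F(y')$ are full in $\X$, for $a\in F(y')$ we have
\[
P_x^{F(y)}\vert_{F(y')}(a)=\Hom_{\X}(x,a).
\]
There are two cases.

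\emph{Case $x\in F(y')$.} Then $\Hom_{\X}(x,a)=\Hom_{F(y')}(x,a)=P_x^{F(y')}(a)$ by fullness. The action on morphisms is postcomposition in both functors, so the identification is natural in $a$ and we get $P_x^{F(y)}\vert_{F(y')}=P_x^{F(y')}$.

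\emph{Case $x\notin F(y')$.} This is the one point requiring an argument, and it is where the sieve hypothesis enters. Suppose $\Hom_{\X}(x,a)\neq0$ for some $a\in F(y')$. Since $F(y')$ is a sieve, this forces $x\in F(y')$, a contradiction. Hence $\Hom_{\X}(x,a)=0$ for all $a\in F(y')$, and the restriction vanishes.

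Combining the two cases with the case $y'\nleq y$ gives \eqref{eq:restriction-kan}. All isomorphisms used are in fact equalities of functors, so nothing further needs checking. No dimension or semisimplicity hypothesis is used anywhere.
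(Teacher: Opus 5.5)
Your proposal is correct and follows essentially the same route as the paper: evaluate pointwise via the Kan formula \eqref{eq:kan-formula}, use monotonicity to make the restriction to $F(y')$ defined, then use fullness when $x\in F(y')$ and the sieve property of $F(y')$ when $x\notin F(y')$. Your explicit check that $\iota_{y'}$ lands in target layer $y'$, so that morphisms act by $P_x^{F(y)}(u)$, makes precise a point the paper only states in passing.
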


\begin{proof}
We compute the restriction pointwise. Let $a\in F(y')$. By definition,
$(\iota_{y'}^*(\iota_y)_*P_x^{F(y)})(a)
=((\iota_y)_*P_x^{F(y)})(a,y')$.
Applying \eqref{eq:kan-formula}, we obtain
\[
\bigl(\iota_{y'}^*(\iota_y)_*P_x^{F(y)}\bigr)(a)
=
\begin{cases}
P_x^{F(y)}(a), & y'\leq y,\\
0,              & y'\nleq y.
\end{cases}
\]
If $y'\nleq y$, this value is zero for every $a\in F(y')$, and hence
the entire restricted module is zero.

Suppose now that $y'\leq y$. By monotonicity, $F(y')\subseteq F(y)$.

Therefore the restriction of $P_x^{F(y)}$ to $F(y')$ is defined, and
for every $a\in F(y')$ its value is
\[
P_x^{F(y)}(a)
=
\Hom_{F(y)}(x,a)
=
\Hom_{\X}(x,a),
\]
where the last equality follows from the fullness of $F(y)$ in $\X$.
This proves $\iota_{y'}^*(\iota_y)_*P_x^{F(y)}
\cong
P_x^{F(y)}\vert_{F(y')}$.

It remains to identify this restriction more explicitly. First suppose
that $x\in F(y')$.
Since $F(y')$ is full in $\X$, for every $a\in F(y')$ we have
\[
\Hom_{\X}(x,a)
=
\Hom_{F(y')}(x,a).
\]
The action on morphisms is also given by composition in $F(y')$.
Consequently,
\[
P_x^{F(y)}\vert_{F(y')}
\cong
\Hom_{F(y')}(x,-)
=
P_x^{F(y')}.
\]

Now suppose that $x\notin F(y')$. We claim that $\Hom_{\X}(x,a)=0
\qquad
\text{for every }a\in F(y')$.
Indeed, if there were a nonzero morphism $x\longrightarrow a$
with $a\in F(y')$, then the sieve property of $F(y')$ would imply $x\in F(y')$,
contrary to our assumption. Hence $P_x^{F(y)}(a)=\Hom_{\X}(x,a)=0$
for every $a\in F(y')$. Thus the restriction is the zero module.

Combining the three cases gives
\[
\iota_{y'}^*(\iota_y)_*P_x^{F(y)}
\cong
\begin{cases}
P_x^{F(y')},
  & y'\leq y\text{ and }x\in F(y'),\\
0,
  & \text{otherwise}.
\end{cases}
\]
\end{proof}

\subsection{The tilting object}

For $x\in F(y)$, put
\begin{equation}
 T_{x,y}=(\iota_y)_*P_x,\qquad
 T=\bigoplus_{y\in Y}\ \bigoplus_{x\in F(y)}T_{x,y}.
\label{eq:T-relative}
\end{equation}

\begin{proposition}
\label{prop:T-rigid}
The object $T$, regarded as an object of $\Db{\G}$, satisfies
\[
 \Hom_{\Db{\G}}(T,T[r])=0\qquad(r\neq0).
\]
\end{proposition}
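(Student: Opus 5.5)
Because $(\iota_y)_*$ is an exact right adjoint of the exact functor $\iota_y^*$ (Lemma \ref{lem:kan}), the adjunction passes to bounded derived categories: the derived functors exist without further resolution and satisfy
\[
 \Hom_{\Db{\G}}\bigl(N,(\iota_y)_*M[r]\bigr)\cong
 \Hom_{\Db{F(y)}}\bigl(\iota_y^*N,M[r]\bigr)
\]
for all $N\in\Db{\G}$ and $M\in\Db{F(y)}$. To justify this I would argue that an exact functor with an exact left adjoint induces adjoint triangle functors on $\Db{-}$, since both compose with quasi-isomorphisms and the unit and counit extend termwise. Alternatively, $(\iota_y)_*$ preserves injectives (its left adjoint is exact), so $\RHom$ can be computed with injective resolutions.

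Applying this with $N=T_{x',y'}$ and $M=P_x^{F(y)}$ gives
\[
\Hom_{\Db{\G}}(T_{x',y'},T_{x,y}[r])\cong
\Hom_{\Db{F(y)}}\bigl(\iota_y^*(\iota_{y'})_*P_{x'}^{F(y')},\,P_x^{F(y)}[r]\bigr).
\]
By Lemma \ref{lem:restriction-kan}, applied with the roles of $y$ and $y'$ swapped, the first argument is either $0$ or the representable $P_{x'}^{F(y)}$. The latter case occurs exactly when $y\le y'$ and $x'\in F(y)$. In the zero case every $\Hom$ vanishes. In the other case the first argument is a projective $F(y)$-module, so $\Ext^r_{F(y)}(P_{x'}^{F(y)},P_x^{F(y)})=0$ for $r\neq0$. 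Summing over the finitely many summands of $T$ gives the claim.

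As a byproduct, the same computation in degree $0$ yields
\[
\Hom(T_{x',y'},T_{x,y})\cong \Hom_{F(y)}(x,x')=\Hom_{\X}(x,x')
\]
when $y\le y'$ and $x'\in F(y)$, and zero otherwise, by Yoneda. This matches the Hom spaces of $\Gs$ after taking opposites, which anticipates the later endomorphism computation.

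The main obstacle is justifying the derived adjunction cleanly. Once it is in place, the rest is formal. No finite-global-dimension hypothesis is needed at this stage; it will enter only in the generation part of the tilting statement.
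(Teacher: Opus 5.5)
Your proposal is correct and follows essentially the same route as the paper. Both arguments use the derived adjunction $\iota_y^*\dashv(\iota_y)_*$, which is available because both functors are exact. They then apply Lemma~\ref{lem:restriction-kan} to reduce the first argument to $0$ or a representable $F(y)$-module, use projectivity of representables to kill all nonzero shifts, and sum over the finitely many summands. Your degree-zero byproduct likewise matches the computation the paper carries out in Proposition~\ref{prop:end-relative}.
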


\begin{proof}
Since $Y$ and $\X$ are finite, the decomposition
\[
T=
\bigoplus_{y\in Y}\;
\bigoplus_{x\in F(y)}T_{x,y}
\]
is a finite direct sum. It is therefore enough to prove that
\[
\Hom_{\Db{\G}}(T_{x,y},T_{x',y'}[r])=0
\]
for every pair $x\in F(y),\qquad x'\in F(y')$,
and every integer $r\neq0$.

Recall that $T_{x,y}=(\iota_y)_*P_x^{F(y)},
\qquad
T_{x',y'}=(\iota_{y'})_*P_{x'}^{F(y')}$.
By Lemma~\ref{lem:kan}, the functors
\[
\iota_{y'}^*:\modu\G\longrightarrow\modu F(y')
\]
and
\[
(\iota_{y'})_*:\modu F(y')\longrightarrow\modu\G
\]
are exact and form an adjoint pair $\iota_{y'}^*\dashv(\iota_{y'})_*$.
Because both functors are exact, they extend degreewise to bounded
derived categories and give an adjoint pair of triangulated functors
\[
\iota_{y'}^*:\Db{\G}\longrightarrow\Db{F(y')},
\qquad
(\iota_{y'})_*:\Db{F(y')}\longrightarrow\Db{\G}.
\]
Consequently, for every $r\in\mathbb Z$, derived adjunction gives
\begin{align}
\Hom_{\Db{\G}}(T_{x,y},T_{x',y'}[r])
&=
\Hom_{\Db{\G}}(T_{x,y},(\iota_{y'})_*P_{x'}^{F(y')}[r])
\nonumber\\
&\cong
\Hom_{\Db{F(y')}}(\iota_{y'}^*T_{x,y},P_{x'}^{F(y')}[r]).
\label{eq:rigidity-derived-adjunction}
\end{align}

Lemma~\ref{lem:restriction-kan} gives
\[
\iota_{y'}^*T_{x,y}
=
\iota_{y'}^*(\iota_y)_*P_x^{F(y)}
\cong
\begin{cases}
P_x^{F(y')},
  & y'\leq y\text{ and }x\in F(y'),\\
0,
  & \text{otherwise}.
\end{cases}
\]

Suppose first that either $y'\nleq y$ or $x\notin F(y')$. Then $\iota_{y'}^*T_{x,y}=0$,
and \eqref{eq:rigidity-derived-adjunction} immediately gives $\Hom_{\Db{\G}}(T_{x,y},T_{x',y'}[r])=0$
for every $r\in\mathbb Z$.

It remains to consider the case $y'\leq y$ and $x\in F(y')$.
In this case, $\iota_{y'}^*T_{x,y}\cong P_x^{F(y')}$,
and hence
\begin{align*}
\Hom_{\Db{\G}}(T_{x,y},T_{x',y'}[r])
&\cong
\Hom_{\Db{F(y')}}(P_x^{F(y')},P_{x'}^{F(y')}[r]).
\end{align*}

The module $P_x^{F(y')}$ is representable and therefore projective in
$\modu F(y')$. Thus it can be used as its own projective resolution,
concentrated in degree zero. It follows that the derived $\Hom$ complex
$\RHom_{F(y')}(P_x^{F(y')},P_{x'}^{F(y')})$
is represented by the ordinary vector space $\Hom_{F(y')}(P_x^{F(y')},P_{x'}^{F(y')})$
placed in degree zero. Consequently,
$\Hom_{\Db{F(y')}}(P_x^{F(y')},P_{x'}^{F(y')}[r])=0$ for every $r\neq0.$
Equivalently, for $r>0$ this vanishing is $\Ext_{F(y')}^r(P_x^{F(y')},P_{x'}^{F(y')})=0$,
because the first argument is projective, while for $r<0$ it follows
because two modules concentrated in degree zero have no negative
derived morphisms.

We have therefore proved $\Hom_{\Db{\G}}(T_{x,y},T_{x',y'}[r])=0$ for all $r\neq0$.
Finally, since the direct sum defining $T$ is finite,
\[
\Hom_{\Db{\G}}(T,T[r])
\cong
\bigoplus_{\substack{y,y'\in Y\\
x\in F(y),\,x'\in F(y')}}
\Hom_{\Db{\G}}(T_{x,y},T_{x',y'}[r]).
\]
Every summand on the right vanishes for $r\neq0$, and hence
$\Hom_{\Db{\G}}(T,T[r])=0$ for every $r\neq0$.
\end{proof}

\begin{proposition}
\label{prop:T-generates}
Assume that
\[
 \gldim\G<\infty
 \qquad\text{and}\qquad
 \gldim F(y)<\infty\quad\text{for every }y\in Y.
\]
Then the object $T$ is perfect and generates $\Db{\G}$.
\end{proposition}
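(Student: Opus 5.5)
Perfectness is immediate from the hypothesis $\gldim\G<\infty$: every finite-dimensional $\G$-module has a finite projective resolution by finitely generated projectives, so each $T_{x,y}$, and hence the finite sum $T$, lies in $\Kb{\pmodcat{\G}}\simeq\Db{\G}$. The substance is generation. I will show that $\thick(T)$ contains every indecomposable projective summand of $P_{(x,y)}=\Hom_{\G}((x,y),-)$; since $\gldim\G<\infty$ makes $\Db{\G}=\thick(\G)$, this suffices.

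The plan is an induction on $y\in Y$ along a linear extension of the poset. The key step is to compare $P_{(x,y)}$ with $T_{x,y}$. By the formula in Lemma~\ref{lem:kan}, $T_{x,y}(a,b)=\Hom_{\X}(x,a)$ for $b\le y$ and $0$ otherwise. Meanwhile $P_{(x,y)}(a,b)=\Hom_{\X}(x,a)$ for $y\le b$ and $0$ otherwise. So I will not compare these directly. Instead I use the dual picture: $T_{x,y}$ is supported on layers $\le y$.

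I consider the localization sequence given by the idempotent $e_y=\sum_{a\in F(y)}\id_{(a,y)}$. For $y$ maximal in $Y$, the modules supported only on layer $y$ are the $F(y)$-modules extended by zero. The object $T_{x,y}$ restricts on layer $y$ to $P_x^{F(y)}$, by Lemma~\ref{lem:restriction-kan}. Its lower layers carry the vertical identity maps, which makes $T_{x,y}$ the injective-type hull relative to the layer.

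More cleanly, I argue by induction on $|Y|$ through the recollement. Pick $y_0$ minimal in $Y$ and let $Y'=Y\setminus\{y_0\}$. Write $\G'=\G(\X,Y',F|_{Y'})$, the full subcategory on the layers $\ne y_0$. Because no nonzero morphism goes from another layer into layer $y_0$, $\G'$ is a co-sieve. This yields a recollement of $\Db{\G}$ with pieces $\Db{F(y_0)}$, via $(\iota_{y_0})_*$, and $\Db{\G'}$, via extension by zero, which is exact. The summands $T_{x,y_0}=(\iota_{y_0})_*P_x^{F(y_0)}$ are concentrated on layer $y_0$. Since $\gldim F(y_0)<\infty$, their thick closure is the whole image $(\iota_{y_0})_*\Db{F(y_0)}$, that is, all complexes supported on layer $y_0$.

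For $y\ne y_0$, the summand $T_{x,y}$ differs from $(\iota'_{y})_*P_x^{F(y)}$ computed in $\G'$ only on layer $y_0$. That difference is a piece supported on layer $y_0$, and it already lies in $\thick(T)$. Hence $\thick(T)$ contains the extensions by zero of the $\G'$-tilting summands together with everything on layer $y_0$. By induction, which needs $\gldim\G'<\infty$, the $\G'$-summands generate $\Db{\G'}$. Gluing along the recollement then gives $\thick(T)=\Db{\G}$.

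The main obstacle is making this comparison precise and checking the inductive hypothesis. First, there is a short exact sequence $0\to K\to T_{x,y}\to j_!T'_{x,y}\to 0$. Here $j_!$ is extension by zero from $\G'$. One must check it is a $\G$-submodule/quotient in the right direction, since layer $y_0$ is a source layer, so the layer-$y_0$ part forms a quotient rather than a sub. The kernel $K$ is supported on layer $y_0$. Second, finite global dimension of $\G'$ follows because $\G'$ is an idempotent corner $e\G e$ with $\G e\G$ stratifying: it is a triangular decomposition
\[
\operatorname{Alg}(\G)\cong\begin{pmatrix}\operatorname{Alg}(F(y_0))&0\\ M&\operatorname{Alg}(\G')\end{pmatrix},
\]
and \cite[Corollary~6.3]{MinamotoYamaura} applies in reverse, or alternatively the recollement bounds it. If that reverse implication is problematic, I would instead replace the induction by a direct filtration of $\Db{\G}$ by layer-supported pieces, using only $\gldim F(y)<\infty$ for each fibre.
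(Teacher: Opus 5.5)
Your argument is correct in substance, but it takes a different route from the paper. The paper does not induct on $Y$. It notes that $(\iota_y)_*I_x^{F(y)}\cong I^{\G}_{(x,y)}$ is a standard injective of $\G$: both sides are $D\Hom_{\X}(a,x)$ on layers $b\le y$ and $0$ elsewhere. Applying the exact functor $(\iota_y)_*$ to a finite resolution of $I_x^{F(y)}$ by $\add\{P_z^{F(y)}\}$ puts every injective of $\G$ into $\thick(T)$, and $\gldim\G<\infty$ then shows that the injectives generate $\Db{\G}$.

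Your d\'evissage is longer but more elementary. You remove a minimal layer $y_0$, use that $\G'$ is a co-sieve, and use $\gldim F(y_0)<\infty$ to put everything supported on layer $y_0$ into $\thick(T)$. You then glue along the functorial sequence $0\to N_{\G'}\to N\to N_{y_0}\to 0$, where $N_{\G'}$ is the submodule supported on $\G'$ and $N_{y_0}$ the quotient supported on layer $y_0$. What this buys is that generation uses only the fibre hypotheses; $\gldim\G<\infty$ is needed only for perfectness.

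Three points need fixing.

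\textbf{Direction of the exact sequence.} Your displayed short exact sequence goes the wrong way, even though you had just stated the correct principle. Since $\G'$ is closed under targets of nonzero morphisms, $j_!T'_{x,y}$ (the restriction of $T_{x,y}$ to $\G'$, extended by zero) is a submodule of $T_{x,y}$. The correct sequence is therefore $0\to j_!T'_{x,y}\to T_{x,y}\to Q\to 0$, with $Q$ supported on layer $y_0$. A surjection $T_{x,y}\to j_!T'_{x,y}$ that is the identity on $\G'$ fails naturality as soon as $T_{x,y}$ has a nonzero structure map from layer $y_0$ into $\G'$. An example is the vertical map at $a=x$ when $x\in F(y_0)$ and $y_0<y$. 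Either direction gives a triangle, so your conclusion survives once this is corrected.

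\textbf{Finite global dimension of $\G'$.} You do not need any ``reverse'' use of the triangular-matrix criterion. Order the layers by a linear extension of $Y$. Then $\operatorname{Alg}(\G')$ is block lower triangular with diagonal blocks $\operatorname{Alg}(F(y))$, $y\in Y'$, so the forward direction suffices, exactly as in Lemma~\ref{lem:homologically-directed-global-dimension}. Alternatively, state the inductive claim as ``every $\G'$-module lies in $\thick(T')$''; this needs no hypothesis on $\gldim\G'$ at all.

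\textbf{Abandoned material.} Your opening plan of putting the projectives into $\thick(T)$ is never carried out, and the second paragraph is exploratory. Both should be removed.
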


\begin{proof}
Since $\G$ has finite global dimension, every finite-dimensional
$\G$-module is perfect.  In particular, every summand of $T$ is perfect.

It remains to prove generation.  Let $I_x^{F(y)}$ be the standard injective
of $F(y)$ at $x$.  It may be a finite direct sum of indecomposable
injectives when $\End_{\X}(x)$ is not a division algebra.  Concretely, $I_x^{F(y)}=D\Hom_{F(y)}(-,x)$.
A direct evaluation gives
\begin{equation}
 (\iota_y)_*I_x^{F(y)}
 \cong I_{(x,y)}^{\G}.
\label{eq:injective-kan}
\end{equation}
Indeed, at $(a,b)$ both sides are
\[
 \begin{cases}
 D\Hom_{\X}(a,x),&b\leq y,\\
 0,&b\nleq y.
 \end{cases}
\]
By assumption, $F(y)$ has finite global dimension.  Therefore
$I_x^{F(y)}$ has a finite projective resolution whose terms belong to $\add\{P_z^{F(y)}:z\in F(y)\}$.
Here direct summands are essential when the diagonal endomorphism algebras
are not semisimple.  Since the exact functor $(\iota_y)_*$ preserves finite
direct sums and direct summands, applying it gives a finite resolution of
$I_{(x,y)}^{\G}$ whose terms belong to $\add(T)$.
Hence
\[
 I_{(x,y)}^{\G}\in\thick(T)
 \qquad\text{for every }(x,y)\in\G.
\]
The direct sum of these modules is the standard injective cogenerator of
$\modu\G$; in particular, every indecomposable injective is one of their
direct summands.  Finally, finite global dimension gives every bounded
complex a finite injective resolution.  The injectives therefore generate
$\Db{\G}$, and we conclude that
$\thick(T)=\Db{\G}$.
\end{proof}

\subsection{The endomorphism category}

Let $\mathcal D$ be a $\kk$-linear category and let
$\{T_i:i\in I\}$ be a finite indexed family of objects of $\mathcal D$.
The \emph{endomorphism category} of this family is the $\kk$-linear
category
\[
 \operatorname{EndCat}_{\mathcal D}\{T_i:i\in I\}
\]
whose object set is $I$ and whose morphism spaces are
\[
 \Hom_{\operatorname{EndCat}_{\mathcal D}\{T_i:i\in I\}}(i,j)
 =
 \Hom_{\mathcal D}(T_i,T_j).
\]
Its identities and composition are inherited from $\mathcal D$. Retaining the
index set is important if the family contains repeated or isomorphic objects.
If there are no repetitions, this category may be identified with the full
$\kk$-linear subcategory of $\mathcal D$ on the objects $T_i$. In
contrast, the endomorphism algebra of the direct sum is canonically
isomorphic to the category algebra:
\begin{equation}
 \End_{\mathcal D}\!\left(\bigoplus_{i\in I}T_i\right)
 \cong
 \operatorname{Alg}\!\left(
  \operatorname{EndCat}_{\mathcal D}\{T_i:i\in I\}
 \right).
\label{eq:endcat-algebra}
\end{equation}

For the indexed family $T_{x,y}=(\iota_y)_*P_x^{F(y)}, y\in Y,\quad x\in F(y)$,
of summands of the tilting object $T$ in \eqref{eq:T-relative}, let
\[
 \mathcal E_T
 =
 \operatorname{EndCat}_{\Db{\G}}
 \{T_{x,y}:y\in Y,\ x\in F(y)\}.
\]
Thus the labels $(x,y)$ are retained as the objects of $\mathcal E_T$, and
its morphisms and composition are those in $\Db(\G)$.

\begin{proposition}
\label{prop:end-relative}
There is an isomorphism of linear categories
\[
 \mathcal E_T^{\op}\cong\Gs.
\]
Consequently, taking category algebras gives an isomorphism of algebras
\[
 \End_{\Db{\G}}(T)^{\op}
 \cong\operatorname{Alg}(\Gs).
\]
\end{proposition}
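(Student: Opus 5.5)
The plan is to compute each morphism space of $\mathcal E_T$ by the adjunction of Lemma~\ref{lem:kan}, and then to check that composition in $\mathcal E_T$, transported through these identifications, becomes the operation $\star$ with arguments reversed. Since $T_{x,y}$ and $T_{x',y'}$ are modules, morphisms in $\Db{\G}$ between them in degree zero are module homomorphisms. The degree-zero case of \eqref{eq:rigidity-derived-adjunction} together with Lemma~\ref{lem:restriction-kan} gives
\[
 \Hom_{\G}(T_{x,y},T_{x',y'})\cong\Hom_{F(y')}(\iota_{y'}^*T_{x,y},P_{x'}^{F(y')}),
\]
which vanishes unless $y'\leq y$ and $x\in F(y')$. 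In that case it equals $\Hom_{F(y')}(P_x^{F(y')},P_{x'}^{F(y')})\cong\Hom_{\X}(x',x)$ by Yoneda, using fullness. Thus
\[
 \Hom_{\mathcal E_T}((x,y),(x',y'))\cong\Hom_{\Gs}((x',y'),(x,y)),
\]
because the condition ``$y'\leq y$ and $x\in F(y')$'' is exactly the support condition \eqref{eq:Gamma-sharp-hom} for morphisms $(x',y')\to(x,y)$. This defines the candidate functor $\Theta\colon\mathcal E_T^{\op}\to\Gs$, the identity on the labels.

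To make $\Theta$ explicit, I will write down, for $u\in\Hom_{\X}(x',x)$ with $y'\leq y$ and $x\in F(y')$, the corresponding natural transformation $\theta_u\colon T_{x,y}\to T_{x',y'}$. Following $\Psi$ in the proof of Lemma~\ref{lem:kan}, its component at $(a,b)$ with $b\leq y'$ is precomposition $-\circ u\colon\Hom_{\X}(x,a)\to\Hom_{\X}(x',a)$. The component is zero for $b\nleq y'$, and also whenever $a$ receives no nonzero map from $x$ inside the layer. This description shows that $\theta_{\id}$ is the identity, so $\Theta$ preserves identities.

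The main step is composition. Take $\theta_u\colon T_{x_2,y_2}\to T_{x_1,y_1}$ and $\theta_v\colon T_{x_1,y_1}\to T_{x_0,y_0}$, corresponding to $u\colon x_1\to x_2$ and $v\colon x_0\to x_1$, with $y_0\leq y_1\leq y_2$, $x_1\in F(y_0)$ and $x_2\in F(y_1)$. On components, $\theta_v\theta_u$ at $(a,b)$ with $b\leq y_0$ is $-\circ uv$ from $\Hom_{\X}(x_2,a)$ to $\Hom_{\X}(x_0,a)$. There are two cases.
\begin{itemize}
\item If $x_2\in F(y_0)$, this is exactly $\theta_{uv}$, which matches $u\star v=uv$ in $\Gs$.
\item If $x_2\notin F(y_0)$, then $\Hom_{\X}(x_2,a)=0$ for every $a\in F(y_0)$ by the sieve property, as in Lemma~\ref{lem:restriction-kan}. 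Hence every component vanishes and the composite is $0$, which matches the forced zero in \eqref{eq:sharp-composition}.
\end{itemize}
Reversal of order gives $\Theta(\theta_v\circ\theta_u)=\Theta(\theta_u)\star\Theta(\theta_v)$ up to the op convention, which is the contravariance recorded by $\mathcal E_T^{\op}$.

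I expect the obstacle to be bookkeeping rather than depth. The main risks are making sure the Yoneda identification uses precomposition consistently, so that variance comes out as op rather than as an anti-isomorphism twisted twice, and checking that no nonzero contribution hides in layers $b\nleq y_0$ where $T_{x_0,y_0}$ vanishes. Bijectivity on each morphism space already follows from the adjunction computation, so $\Theta$ is an isomorphism of linear categories. The algebra statement then follows by applying \eqref{eq:endcat-algebra} together with $\operatorname{Alg}(\mathcal C^{\op})=\operatorname{Alg}(\mathcal C)^{\op}$.
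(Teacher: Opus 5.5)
Your proposal is correct and follows essentially the same route as the paper. Both compute the morphism spaces via the degree-zero adjunction, Lemma~\ref{lem:restriction-kan} and Yoneda, and describe the maps explicitly as precomposition on the layers below the target. Both then settle composition by the two cases $x_2\in F(y_0)$ and $x_2\notin F(y_0)$, with the sieve property producing the forced zeros.
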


\begin{proof}
All the objects $T_{x,y}$ are modules concentrated in degree zero.  Hence
there are canonical identifications
\[
 \Hom_{\Db{\G}}(T_{x',y'},T_{x,y})
 =\Ext^0_{\G}(T_{x',y'},T_{x,y})
 =\Hom_{\G}(T_{x',y'},T_{x,y}).
\]
Thus the morphism spaces of the indexed endomorphism category may be
computed in the ordinary module category.  By adjunction and Lemma
\ref{lem:restriction-kan},
\begin{align*}
 \Hom_{\G}(T_{x',y'},T_{x,y})
 &\cong
 \Hom_{F(y)}
 \bigl(\iota_y^*T_{x',y'},P_x\bigr)\\
 &\cong
 \begin{cases}
 \Hom_{F(y)}(P_{x'},P_x),
     &y\leq y'\text{ and }x'\in F(y),\\
 0,&\text{otherwise}.
 \end{cases}
\end{align*}
Yoneda's lemma identifies $\Hom_{F(y)}(P_{x'},P_x)\cong\Hom_{\X}(x,x')$.
After passing to the opposite category $\mathcal E_T^{\op}$, this is exactly
\eqref{eq:Gamma-sharp-hom}.

It remains to check multiplication.  Under the Yoneda identification, the
map corresponding to
\[
 f\in\Hom_{\X}(x,x'),\qquad
 y\leq y',\quad x'\in F(y),
\]
is a natural transformation
\begin{equation}
 u_f:T_{x',y'}\longrightarrow T_{x,y}.
\label{eq:uf}
\end{equation}
Its component at $(a,b)$ is zero when $b\nleq y$, while for $b\leq y$
it is the precomposition map
\begin{equation}
 \Hom_{\X}(x',a)\longrightarrow\Hom_{\X}(x,a),
 \qquad h\longmapsto hf.
\label{eq:uf-component}
\end{equation}
These formulas also give a direct check of naturality.

Now take composable morphisms $f:x_0\longrightarrow x_1,\qquad
 g:x_1\longrightarrow x_2$
in $\Gs$, lying over $y_0\leq y_1\leq y_2$.  In the endomorphism
category their product is represented, in the reverse order, by
$u_f u_g:T_{x_2,y_2}\longrightarrow T_{x_0,y_0}$.
If $x_2\in F(y_0)$, then \eqref{eq:uf-component} gives $(u_fu_g)_{(a,b)}(h)=hgf$
for every $b\leq y_0$, and consequently $u_fu_g=u_{gf}$, including the
case $gf=0$ in $\X$.

Suppose instead that $x_2\notin F(y_0)$.  If $b\leq y_0$, then
$a\in F(b)\subseteq F(y_0)$.  A nonzero element of
$\Hom_{\X}(x_2,a)$ would force $x_2\in F(y_0)$ by the sieve property.
Thus $T_{x_2,y_2}(a,b)=0$ whenever $b\leq y_0$.  When
$b\nleq y_0$, the target $T_{x_0,y_0}(a,b)$ is zero.  Hence
$u_fu_g$ is the zero natural transformation. After passing to
$\mathcal E_T^{\op}$, this is exactly the multiplication rule
\eqref{eq:sharp-composition}, including its forced zero composites.
\end{proof}

\begin{theorem}
\label{thm:relative}
Let $\X$ be a finite $\kk$-linear category, let $Y$ be a finite
poset, and let
$F(y)\subseteq\X$, $y\in Y$, be full sieves satisfying
\[
 y\leq y'\quad\Longrightarrow\quad F(y)\subseteq F(y').
\]
Assume that $\gldim\G(\X,Y,F)<\infty$ and $\gldim F(y)<\infty$ for every $y\in Y$.
Then the categories in Definitions \ref{def:Gamma} and \ref{def:Gamma-sharp} are well defined,
and the object
\[
 T=\bigoplus_{y\in Y}\ \bigoplus_{x\in F(y)}
 (\iota_y)_*\Hom_{F(y)}(x,-)
\]
of \eqref{eq:T-relative} is a tilting complex. There are canonical
isomorphisms
\[
 \mathcal E_T^{\op}\cong\Gs,
 \qquad
 \End_{\Db{\G}}(T)^{\op}
 \cong\operatorname{Alg}(\Gs),
\]
and consequently a triangle equivalence $\Db{\G(\X,Y,F)}\simeq\Db{\Gs(\X,Y,F)}$.
\end{theorem}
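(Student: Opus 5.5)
Theorem \ref{thm:relative} is an assembly of the results proved earlier in this section, and the proof is a matter of putting them together in the right order. Well-definedness comes first. The category $\G$ is a full subcategory of $\X\otimes\kk Y$, so it is automatically a $\kk$-linear category. The category $\Gs$ is well defined by Lemma \ref{lem:associativity}: that lemma gives associativity of $\star$, and $\id_{(x,y)}=\id_x$ is a unit because $x\in F(y)$, so the fibre test $x_2\in F(y_0)$ never fires when one of the two morphisms is an identity.

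The tilting statement comes next. The object $T$ is a finite direct sum of finite-dimensional $\G$-modules. Proposition \ref{prop:T-rigid} gives $\Hom_{\Db{\G}}(T,T[r])=0$ for $r\neq0$, and it uses only the exact adjunction of Lemma \ref{lem:kan} and the projectivity of representables in each $F(y)$. Proposition \ref{prop:T-generates} uses the two finite-global-dimension hypotheses to show that $T$ is perfect and that $\thick(T)=\Db{\G}$. Finite global dimension identifies $\Db{\G}$ with $\Kb{\pmodcat{\G}}$, so $T$ is a tilting complex in the sense of Rickard.

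The identification of the endomorphism ring is Proposition \ref{prop:end-relative}. It gives $\mathcal E_T^{\op}\cong\Gs$ as linear categories, with the forced-zero composites matched. Via \eqref{eq:endcat-algebra} it also gives $\End_{\Db{\G}}(T)^{\op}\cong\operatorname{Alg}(\Gs)$. I would state explicitly that these isomorphisms are canonical: they are built from adjunction and Yoneda, with no choices made.

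The equivalence follows from Rickard's Morita theorem for derived categories. It gives
\[
\Db{\G}\simeq\Db{\End_{\Db{\G}}(T)}.
\]
Here the module convention matters, because $\G$-modules are covariant functors, that is, left modules over $\operatorname{Alg}(\G)$. Under the convention of this paper, the algebra whose module category is equivalent to $\Db{\G}$ is $\End(T)^{\op}$, which is $\operatorname{Alg}(\Gs)$. Since $\Gs\modcat$ is equivalent to $\operatorname{Alg}(\Gs)\modcat$ for a finite category, the equivalence reads $\Db{\G}\simeq\Db{\Gs}$.

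The main point to check carefully is this op convention. If it came out the wrong way, the result would be $\Db{\Gs^{\op}}$ instead. I would settle it with the standard form: for a tilting module $T$ over an algebra $A$, one has $\Db{A\modcat}\simeq\Db{\End_A(T)^{\op}\modcat}$ via $\RHom_A(T,-)$, where $\Hom_A(T,M)$ is a left $\End(T)^{\op}$-module. With $A=\operatorname{Alg}(\G)$ and left modules matching covariant functors, this gives exactly $\operatorname{Alg}(\Gs)$.

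A second, minor point is that Rickard's theorem needs $T\in\Kb{\pmodcat{\G}}$. This is supplied by finite global dimension, since each $T_{x,y}$ is then quasi-isomorphic to a bounded complex of projectives.
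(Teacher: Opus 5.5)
Your proposal is correct and follows the paper's proof: $\Gs$ is well defined by Lemma~\ref{lem:associativity}, $T$ is tilting by Propositions~\ref{prop:T-rigid} and~\ref{prop:T-generates}, $\mathcal E_T^{\op}\cong\Gs$ by Proposition~\ref{prop:end-relative}, and Rickard's theorem gives the equivalence. Your additional checks (units in $\Gs$, perfectness from finite global dimension, and the $\End(T)^{\op}$ convention for covariant functors viewed as left modules) are sound and only make explicit what the paper leaves implicit; for the unit axiom, the fibre test passes because a nonzero morphism of $\Gs$ out of $(x_0,y_0)$ already has its target in $F(y_0)$, not merely because $x\in F(y)$.
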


\begin{proof}
By Lemma \ref{lem:associativity}, the category $\Gs$ is well defined.  By
Propositions \ref{prop:T-rigid} and \ref{prop:T-generates}, the object $T$ of
\eqref{eq:T-relative} is a tilting complex. By
Proposition \ref{prop:end-relative}, the opposite indexed endomorphism
category of its summands is $\Gs$, and the opposite endomorphism algebra of
their direct sum is $\operatorname{Alg}(\Gs)$.
The claim follows from Rickard's derived Morita theorem
\cite{Rickard1989}.
\end{proof}

\begin{corollary}
\label{cor:directed-relative}
In the setting of Theorem \ref{thm:relative}, suppose that the objects of $\X$
admit an ordering $x_1,\ldots,x_n$ such that $\Hom_{\X}(x_i,x_j)=0$ for $i>j$,
and $\gldim\End_{\X}(x_i)<\infty$ for every $i$.
Then $\G(\X,Y,F)$ and every $F(y)$ have finite global dimension.
Consequently all conclusions of Theorem \ref{thm:relative} hold.
\end{corollary}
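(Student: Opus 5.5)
The plan is to reduce both finiteness claims to Lemma \ref{lem:homologically-directed-global-dimension} by showing that $\G(\X,Y,F)$ and each $F(y)$ are themselves homologically directed. Then all conclusions follow from Theorem \ref{thm:relative}.

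For $F(y)$ this is immediate. It is a full subcategory of $\X$, and $\X$ is homologically directed by hypothesis, so the final assertion of Lemma \ref{lem:homologically-directed-global-dimension} gives $\gldim F(y)<\infty$.

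For $\G$, I will build a directed ordering of its objects. First choose a linear extension $y_1,\ldots,y_p$ of the poset $Y$. Then order the objects $(x_i,y_k)$ lexicographically: first by $k$, then by $i$.

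To see that this ordering is directed, suppose $(x_i,y_k)$ comes strictly after $(x_j,y_l)$, and that $\Hom_{\G}((x_i,y_k),(x_j,y_l))\neq0$. Nonvanishing forces $y_k\leq y_l$, so $k\leq l$. Being later in the order then forces $k=l$ and $i>j$. But the Hom space equals $\Hom_{\X}(x_i,x_j)=0$ for $i>j$, a contradiction. Hence all Hom spaces from later objects to earlier ones vanish.

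The diagonal endomorphism algebras of $\G$ are $\End_{\G}(x,y)=\End_{\X}(x)$, because $y\leq y$ holds. These have finite global dimension by hypothesis. So $\G$ is a finite homologically directed category, and Lemma \ref{lem:homologically-directed-global-dimension} gives $\gldim\G<\infty$.

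With both finiteness hypotheses verified, Theorem \ref{thm:relative} applies and yields all its conclusions.

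The only step needing care is the choice of ordering on $\G$: it must refine both the order on $Y$ and the directed order on $\X$. The lexicographic order built from a linear extension of $Y$ does this, and I expect no real obstacle beyond that.
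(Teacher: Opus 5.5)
Your proposal is correct and follows essentially the same route as the paper. Both arguments show that each fibre $F(y)$ inherits homological directedness as a full subcategory, and that a linear extension of $Y$ combined with the directed ordering of $\X$ gives a directed ordering of $\G$ with $\End_{\G}(x,y)=\End_{\X}(x)$; both then invoke Lemma~\ref{lem:homologically-directed-global-dimension}. Your explicit check, that a nonzero morphism from a later object to an earlier one forces $k=l$ and $i>j$, spells out the step the paper states in one line.
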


\begin{proof}
The hypotheses say exactly that $\X$ is homologically directed.  Every full
fibre $F(y)$ inherits the same property.  Moreover, the product of the
given ordering of $\X$ with a linear extension of $Y$ gives a directed
ordering of $\G$, and $\End_{\G}(x,y)=\End_{\X}(x)$.
Thus $\G$ is homologically directed as well.  Applying
Lemma \ref{lem:homologically-directed-global-dimension} to $\G$ and to every
$F(y)$ verifies the homological hypotheses of Theorem \ref{thm:relative}.
\end{proof}

\begin{remark}
When $\X$ is the incidence category of a finite poset, this construction is
the mechanism of \cite[Theorem~3.1]{CLR}.  The extension needed here is that
$\X$ may have arbitrary finite-dimensional Hom spaces, non-semisimple
diagonal endomorphism algebras, and its own zero compositions.  Moreover,
closed subsets of a poset are replaced by full sieves, while finite global
dimension supplies the perfectness and generation needed for derived
Morita theory.  The proof above shows that no poset-specific factorization
or Schur-basis argument is needed: exact Kan extension, Yoneda's lemma, and
the sieve property suffice.
\end{remark}

\begin{exm}
\label{ex:multiple-arrows}
Let $\X_m$ have two objects $0<1$, diagonal endomorphism algebras
$\kk$, and $\Hom_{\X_m}(0,1)=\kk^m,\Hom_{\X_m}(1,0)=0$.
Choose a basis $\alpha_1,\ldots,\alpha_m$ of
$\Hom_{\X_m}(0,1)$.  Let $Y=\{u<v\}$ and take $F(u)=\{0\},F(v)=\X_m$.
The full subcategory $\{0\}$ is a sieve, so this is an order-preserving
family of full sieves.  Put $a=(0,u),\qquad b=(0,v),\qquad c=(1,v)$.
Let $Q_m$ be the quiver with arrows
\[
 p:a\longrightarrow b,
 \qquad
 q_i:b\longrightarrow c\quad(1\leq i\leq m).
\]
Equivalently, it has the form
\[
 a\xrightarrow{\ p\ }b
 \mathrel{\substack{
   \xrightarrow{\ q_1\ }\\[-2pt]
   \xrightarrow{\ q_2\ }\\[-3pt]
   \vdots\\[-3pt]
   \xrightarrow[\ q_m\ ]{}
 }}c.
\]

In $\G$, the $m$ morphisms from $a$ to $c$ are precisely the paths
$q_i p$.  Consequently $\Alg(\G)\cong \kk Q_m$.

In $\Gs$, however, $c$ has $\X_m$-component $1\notin F(u)$.  Thus $\Hom_{\Gs}(a,c)=0,
 q_i\star p=0$ for $1\leq i\leq m$,
and hence $\Alg(\Gs)\cong
\frac{\kk Q_m}{(q_i p:1\leq i\leq m)}$.
Corollary \ref{cor:directed-relative} verifies the hypotheses of
Theorem \ref{thm:relative}, which yields
\[
 \Db{\kk Q_m}
 \simeq
 \Db{\frac{\kk Q_m}{(q_i p:1\leq i\leq m)}}.
\]
Under the identification $\G\cong\kk Q_m$, the tilting object of
Theorem \ref{thm:relative} is
\[
 T\cong S_a\oplus P_a\oplus P_c,
\]
where $S_a$ is represented in $\perf(\kk Q_m)$ by the two-term complex
$[P_b\to P_a]$, placed in degrees $-1$ and $0$, respectively.  We use
this degree convention for all two-term complexes in the examples below.
The algebra $\kk Q_m$ is hereditary, whereas the bound
quiver algebra on the right has global dimension two.  Thus this is a
genuine derived equivalence rather than a Morita equivalence.  For
$m\geq2$, the category $\X_m$ is semisimple-directed but not Schur, so the
example also shows that allowing arbitrary finite-dimensional morphism
spaces is a strict extension of the directed Schur setting.
\end{exm}

\begin{exm}
\label{ex:nonsemisimple-diagonal}
Let
\[
 R=
 \begin{pmatrix}
  \kk&\kk\\
  0&\kk
 \end{pmatrix}.
\]
Then $R$ is non-semisimple and $\gldim R=1$.  Let $\X_R$ be the
$\kk$-linear category with two objects $0,1$ such that $\End_{\X_R}(0)=R,\End_{\X_R}(1)=\kk,\Hom_{\X_R}(1,0)=0$,and $\Hom_{\X_R}(0,1)=M$,
where $M$ is any nonzero finite-dimensional right $R$-module.  Composition
is given by the right $R$-action on $M$ and scalar multiplication on the
left.  With the ordering $0<1$, the category $\X_R$ is homologically
directed, but its diagonal endomorphism algebra at $0$ is not semisimple.

Let $Y=\{u<v\}$, and define a monotone family of full sieves by $F(u)=\{0\}, F(v)=\X_R$.
The subcategory $\{0\}$ is a sieve because
$\Hom_{\X_R}(1,0)=0$.  Corollary \ref{cor:directed-relative} therefore
applies.  Notice that
\[
 \Hom_{\G}\bigl((0,u),(1,v)\bigr)=M,
 \qquad
 \Hom_{\Gs}\bigl((0,u),(1,v)\bigr)=0,
\]
since $1\notin F(u)$.  Thus the theorem produces a genuine forced-zero
modification in a case with a non-semisimple diagonal endomorphism algebra.

The tilting object can also be written explicitly.  Put $a=(0,u),\qquad b=(0,v),\qquad c=(1,v)$,
and let $P_a,P_b,P_c$ denote the corresponding representable
$\G$-modules.  Let $U_a$ be the module supported at $a$ with
$U_a(a)=R$.  Equivalently, extension by zero from the fibre $F(u)$ gives
$U_a=(\iota_u)_*R$.  The vertical morphism $a\to b$ induces an exact
sequence
\[
 0\longrightarrow P_b\longrightarrow P_a
   \longrightarrow U_a\longrightarrow0.
\]
Hence the tilting object of Theorem \ref{thm:relative} is
\[
 T\cong U_a\oplus P_a\oplus P_c
 \simeq [P_b\longrightarrow P_a]\oplus P_a\oplus P_c
 \quad\text{in }\perf(\G).
\]
This formula is meant with the indexed summands prescribed by the theorem;
no basicization of the possibly decomposable $R$-module summand is being
performed.
\end{exm}

\begin{exm}
\label{ex:nonchain-Y}
Let $\X$ be the path category of $0\xrightarrow{\alpha}1$.  Let $Y$ be
the three-element poset with $u<v,u<w$,
and with $v$ and $w$ incomparable.  Define
\[
 F(u)=\{0\},
 \qquad
 F(v)=F(w)=\X.
\]
As before, $\{0\}$ is a full sieve, and the family is order preserving.
Set
\[
 a=(0,u),\quad b=(0,v),\quad c=(1,v),\quad
 d=(0,w),\quad e=(1,w).
\]
Let $Q_{\vee}$ be the fork-shaped quiver
\[
 a\xrightarrow{p_v}b\xrightarrow{q_v}c,
 \qquad
 a\xrightarrow{p_w}d\xrightarrow{q_w}e.
\]
Since $v$ and $w$ are incomparable, there are no morphisms between the
two upper branches.  In $\G$, the morphisms $a\to c$ and $a\to e$ are
the paths $q_vp_v$ and $q_wp_w$, respectively, and therefore $\Alg(\G)\cong\kk Q_{\vee}$.
Both terminal vertices $c$ and $e$ have $\X$-component $1$, which does
not belong to the source fibre $F(u)$.  The multiplication in $\Gs$
therefore gives $q_v\star p_v=0,q_w\star p_w=0$,
and $\Alg(\Gs)\cong\frac{\kk Q_{\vee}}{(q_vp_v,q_wp_w)}$.
The categories involved are homologically directed, so
Corollary \ref{cor:directed-relative} and Theorem \ref{thm:relative}
give
\[
 \Db{\kk Q_{\vee}}
 \simeq
 \Db{\frac{\kk Q_{\vee}}{(q_vp_v,q_wp_w)}}.
\]
Here the tilting object admits a useful branchwise description.  Let
$P_a,P_b,P_c,P_d,P_e$ be the indecomposable projectives of
$\kk Q_{\vee}$.  Let $U_v$ be the representation supported on the branch
$a\to b\to c$, with one-dimensional values and identity structure maps,
and let $U_w$ be defined analogously on $a\to d\to e$.  There are exact
sequences
\[
 \begin{aligned}
  0&\longrightarrow P_b\oplus P_d\longrightarrow P_a
       \longrightarrow S_a\longrightarrow0,\\
  0&\longrightarrow P_d\longrightarrow P_a
       \longrightarrow U_v\longrightarrow0,\\
  0&\longrightarrow P_b\longrightarrow P_a
       \longrightarrow U_w\longrightarrow0.
 \end{aligned}
\]
The five fibrewise Kan-extension summands are therefore
\[
 T_{0,u}=S_a,
 \quad T_{0,v}=U_v,
 \quad T_{1,v}=P_c,
 \quad T_{0,w}=U_w,
 \quad T_{1,w}=P_e,
\]
and the tilting object is explicitly
\[
 \begin{aligned}
 T\simeq{}&[P_b\oplus P_d\longrightarrow P_a]
 \oplus[P_d\longrightarrow P_a]\oplus P_c\\
 &\oplus[P_b\longrightarrow P_a]\oplus P_e
 \qquad\text{in }\perf(\kk Q_{\vee}).
 \end{aligned}
\]
This example shows that the relative construction can insert forced-zero
relations simultaneously along incomparable branches of $Y$; the indexing
poset need not be a chain.
\end{exm}

\begin{exm}
\label{ex:existing-zero-relation}
Let $\X=\frac{\kk(0\xrightarrow{\alpha}1\xrightarrow{\beta}2)}
             {(\beta\alpha)}$
and let $Y=\{u<v\}$.  Define
\[
 F(u)=\{0,1\},
 \qquad
 F(v)=\X.
\]
The subcategory on $\{0,1\}$ is a full sieve.  Moreover, $\X$ is
homologically directed, so Corollary \ref{cor:directed-relative} applies.
Write
\[
 a=(0,u),\quad b=(1,u),\quad
 a'=(0,v),\quad b'=(1,v),\quad c'=(2,v).
\]
Both $\G$ and $\Gs$ have the same underlying quiver $Q$ with arrows
\[
 \alpha_u:a\longrightarrow b,
 \qquad
 s_0:a\longrightarrow a',
 \qquad
 s_1:b\longrightarrow b',
\]
and $\alpha_v:a'\longrightarrow b',\beta_v:b'\longrightarrow c'$.

The category algebra of $\G$ has the presentation
\[
 \Alg(\G)
 \cong
 \frac{\kk Q}
 {(s_1\alpha_u-\alpha_vs_0,\ \beta_v\alpha_v)}.
\]
Here the commutativity relation identifies the two representatives of the
morphism $a\to b'$, while $\beta_v\alpha_v=0$ is inherited from the
original relation $\beta\alpha=0$ in $\X$.  In contrast, the path $\beta_vs_1:b\longrightarrow c'$
is nonzero in $\G$: it represents the nonzero morphism
$\beta\in\Hom_{\X}(1,2)$ across the two layers.

For $\Gs$, the final target of this path has $\X$-component
$2\notin F(u)$.  Hence the target-in-the-source-fibre test inserts the
additional relation $\beta_v\star s_1=0$.
It follows that $\Alg(\Gs)
 \cong
 \frac{\kk Q}
 {(s_1\alpha_u-\alpha_vs_0,\ \beta_v\alpha_v,\ \beta_vs_1)}$.
The tilting object is again completely explicit.  Denote the projectives of
$\G$ by $P_a,P_b,P_{a'},P_{b'},P_{c'}$.  Let $U_0$ be the module supported
on $a\xrightarrow{\alpha_u}b$, with value $\kk$ at both vertices and
identity arrow map.  The other representable of the lower fibre extends to
the simple module $S_b$.  The vertical arrows give exact sequences
\[
 \begin{aligned}
  0&\longrightarrow P_{a'}\longrightarrow P_a
       \longrightarrow U_0\longrightarrow0,\\
  0&\longrightarrow P_{b'}\longrightarrow P_b
       \longrightarrow S_b\longrightarrow0.
 \end{aligned}
\]
Thus the five summands prescribed by \eqref{eq:T-relative} are
\[
 T_{0,u}=U_0,
 \quad T_{1,u}=S_b,
 \quad T_{0,v}=P_a,
 \quad T_{1,v}=P_b,
 \quad T_{2,v}=P_{c'},
\]
and $T\simeq
 [P_{a'}\longrightarrow P_a]
 \oplus[P_{b'}\longrightarrow P_b]
 \oplus P_a\oplus P_b\oplus P_{c'}$ in $\perf(\G)$.
Theorem \ref{thm:relative} therefore gives a derived equivalence between
these two displayed bound quiver algebras.  This example separates the two
kinds of zero composition in the theorem: the relation
$\beta_v\alpha_v=0$ is inherited from $\X$, whereas
$\beta_vs_1=0$ is newly forced by the relative fibre condition.
\end{exm}

\section{Staircase categories}
\label{sec:staircase}

\subsection{Staircase sets}

Fix an integer $m\geq1$ and an $m$-tuple of integers
$H=(H_1,\ldots,H_m)$.  Put
\begin{equation}
 \Omega(H)=
 \{z=(z_1,\ldots,z_m):
 1\leq z_1<\cdots<z_m,\ z_i\leq H_i\}.
\label{eq:staircase}
\end{equation}
We assume $\Omega(H)$ is finite and nonempty.  It is ordered
coordinatewise:
\[
 x\leq y\quad\Longleftrightarrow\quad x_i\leq y_i
 \text{ for all }i.
\]

\begin{definition}
\label{def:Cj}
For $0\leq j\leq m-1$, define a linear category
$\C_j(\Omega(H))$ with object set $\Omega(H)$ and
\begin{equation}
 \Hom_{\C_j}(x,y)=
 \begin{cases}
 \kk f_{yx},&
 x_i\leq y_i\text{ for all }i,\quad
 y_i<x_{i+1}\text{ for }1\leq i\leq j,\\
 0,&\text{otherwise}.
 \end{cases}
\label{eq:Cj-hom}
\end{equation}
For composable basis morphisms, set
\begin{equation}
 f_{zy}f_{yx}=
 \begin{cases}
 f_{zx},&\Hom_{\C_j}(x,z)\neq0,\\
 0,&\Hom_{\C_j}(x,z)=0.
 \end{cases}
\label{eq:Cj-composition}
\end{equation}
\end{definition}

We verify associativity directly.  Consider three composable nonzero basis
morphisms $x^0\longrightarrow x^1\longrightarrow x^2\longrightarrow x^3$.
If $\Hom_{\C_j}(x^0,x^3)\neq0$, then, for every $i\leq j$, $x_i^2\leq x_i^3<x_{i+1}^0$ and $x_i^3<x_{i+1}^0\leq x_{i+1}^1$.
Thus both intermediate outer pairs $(x^0,x^2)$ and $(x^1,x^3)$
satisfy the required inequalities, and both bracketings are the basis
morphism from $x^0$ to $x^3$.  If
$\Hom_{\C_j}(x^0,x^3)=0$, the last multiplication in either bracketing is
zero by the outer-pair rule.  Hence the composition is associative.  Notice
that $\C_0(\Omega(H))=\kk\Omega(H)$
is the incidence category.

\begin{exm}

\begin{enumerate}
\item If $m=1$ and $H=(4)$, then $\Omega(H)=\{(1),(2),(3),(4)\}.$
With the coordinatewise order, this is simply the four-element chain
$(1)<(2)<(3)<(4)$.

\item Let $m=2$ and $H=(3,5)$.  The allowed pairs can be displayed by fixing
the first coordinate:
\[
\begin{array}{c|l}
 z_1&\text{possible elements}\\
 \hline
 1&(1,2),(1,3),(1,4),(1,5),\\
 2&(2,3),(2,4),(2,5),\\
 3&(3,4),(3,5).
\end{array}
\]
The row lengths are $4,3,2$, which gives the usual two-dimensional
staircase shape.  For example, $(1,3)\leq(2,5)$, whereas $(1,5)$ and
$(2,4)$ are incomparable.

\item Let $m=3$ and $H=(2,4,5)$.  Then
\[
\begin{aligned}
 \Omega(H)=\{& (1,2,3),(1,2,4),(1,2,5),
 (1,3,4),(1,3,5),\\
 & (1,4,5),(2,3,4),(2,3,5),(2,4,5)\}.
\end{aligned}
\]
This is a three-dimensional staircase.  More generally, $\Omega(H)$ is a
lower ideal in the poset of increasing $m$-tuples: if $y\in\Omega(H)$ and
$x$ is an increasing $m$-tuple satisfying $x\leq y$, then
$x_i\leq y_i\leq H_i$ for every $i$, and hence $x\in\Omega(H)$.

\item The same two-dimensional example also illustrates the difference
between $\C_0$ and $\C_1$.  In $\Omega(3,5)$, put
\[
 x=(1,3),\qquad y=(2,4),\qquad z=(3,5).
\]
The basis morphisms $f_{yx}\colon x\to y$ and
$f_{zy}\colon y\to z$ are nonzero in $\C_1$, since
$2<3$ and $3<4$.  However, $\Hom_{\C_1}(x,z)=0$
because the required inequality $z_1<x_2$ becomes $3<3$.
Consequently, $f_{zy}f_{yx}=0$ in $\C_1$, although the two factors are
nonzero.  In $\C_0=\kk\Omega(3,5)$, the corresponding composite is the
nonzero incidence morphism $f_{zx}$.
\end{enumerate}
\end{exm}

\subsection{Adding one interlacing inequality}

Fix $1\leq j\leq m-1$.  Write an element of $\Omega(H)$ as
\[
 z=(p,t),\qquad
 p=(z_1,\ldots,z_j),\quad
 t=(z_{j+1},\ldots,z_m).
\]
Define the prefix and tail sets as the corresponding coordinate projections
of $\Omega(H)$:
\begin{equation}
 X_j
 =
 \left\{
 (z_1,\ldots,z_j):
 (z_1,\ldots,z_m)\in\Omega(H)
 \text{ for some }z_{j+1},\ldots,z_m
 \right\},
\label{eq:Xj-projection}
\end{equation}
and
\begin{equation}
 Y_j
 =
 \left\{
 (z_{j+1},\ldots,z_m):
 (z_1,\ldots,z_m)\in\Omega(H)
 \text{ for some }z_1,\ldots,z_j
 \right\}.
\label{eq:Yj-projection}
\end{equation}
We equip $Y_j$ with the coordinatewise order. Since $\Omega(H)$ is finite,
both $X_j$ and $Y_j$ are finite. Equivalently, an element
$t=(t_1,\ldots,t_{m-j})$ belongs to $Y_j$ precisely when
\[
 1\leq t_1<\cdots<t_{m-j},
 \qquad
 t_\ell\leq H_{j+\ell}\quad(1\leq\ell\leq m-j),
\]
and there exists $p\in X_j$ such that $p_j<t_1$.
Define the fully interlaced prefix category $\X_j$ as follows.  Its
objects are the elements of $X_j$, and
\begin{equation}
 \Hom_{\X_j}(p,p')=
 \begin{cases}
  \kk f_{p'p},&
  p_i\leq p'_i\text{ for all }i,\quad
  p'_i<p_{i+1}\text{ for }1\leq i<j,\\
  0,&\text{otherwise}.
 \end{cases}
\label{eq:Xj-hom}
\end{equation}
The identity of $p$ is $f_{pp}$.  For composable basis morphisms, put
\begin{equation}
 f_{p''p'}f_{p'p}=
 \begin{cases}
  f_{p''p},&\Hom_{\X_j}(p,p'')\neq0,\\
  0,&\Hom_{\X_j}(p,p'')=0,
 \end{cases}
\label{eq:Xj-composition}
\end{equation}
and extend composition $\kk$-bilinearly.  The same outer-pair
argument used after Definition \ref{def:Cj}, with tuples of length $j$, proves that
\eqref{eq:Xj-composition} is associative.  Thus $\X_j$ is a well-defined
directed Schur category, including all of its forced-zero composites.

For $t\in Y_j$, define
\begin{equation}
 F_j(t)=
 \{p\in X_j:p_j<t_1\}.
\label{eq:Fj}
\end{equation}

\begin{lemma}
\label{lem:Fj}
The assignment $t\mapsto F_j(t)$ is order preserving and each $F_j(t)$
is a full sieve in $\X_j$.
\end{lemma}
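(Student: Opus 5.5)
Both assertions are direct verifications from the definitions \eqref{eq:Xj-hom} and \eqref{eq:Fj}. No earlier result beyond these definitions is needed.

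\emph{Monotonicity.} Let $t\leq t'$ in $Y_j$ for the coordinatewise order. In particular $t_1\leq t'_1$. If $p\in F_j(t)$, then $p_j<t_1\leq t'_1$, so $p\in F_j(t')$. Hence $F_j(t)\subseteq F_j(t')$. Each $F_j(t)$ is, by definition, the full subcategory of $\X_j$ on the indicated set of objects, so fullness is automatic.

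\emph{Sieve property.} Suppose $p'\in F_j(t)$ and $\Hom_{\X_j}(p,p')\neq0$; we must show $p\in F_j(t)$. By \eqref{eq:Xj-hom}, a nonzero morphism $p\to p'$ forces $p_i\leq p'_i$ for every $i$, and in particular $p_j\leq p'_j$. Therefore $p_j\leq p'_j<t_1$, which gives $p\in F_j(t)$. The extra interlacing conditions $p'_i<p_{i+1}$ play no role here: only the coordinatewise inequality in the last prefix coordinate is used. Consistent with the definition of a sieve, the argument uses only the existence of a single nonzero morphism and not any composite, so forced-zero composites in $\X_j$ are irrelevant.

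\emph{Expected obstacle.} There is essentially none. The one point to check carefully is that the sieve direction matches the convention of the relative theorem, namely closure under sources of nonzero morphisms into the sieve. This matches because morphisms in $\X_j$ increase coordinates, so lowering $p_j$ preserves the condition $p_j<t_1$. I would also note that $F_j(t)$ may be empty for some $t$. This is harmless: the empty subcategory is trivially a sieve, and the definition of $Y_j$ in fact guarantees $F_j(t)\neq\varnothing$.
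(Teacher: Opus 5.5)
Your proof is correct and follows the paper's argument exactly: monotonicity from $t_1\le t'_1$, and the sieve property from the coordinatewise inequality $p_j\le p'_j<t_1$ forced by a nonzero morphism $p\to p'$ in $\X_j$. Your side remark that $F_j(t)\neq\varnothing$ is also right, since any $z\in\Omega(H)$ with tail $t$ has prefix $p$ satisfying $p_j=z_j<z_{j+1}=t_1$.
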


\begin{proof}
If $t\leq t'$, then $t_1\leq t'_1$, so $p_j<t_1\quad\Longrightarrow\quad p_j<t'_1$.
Thus $F_j(t)\subseteq F_j(t')$.

Suppose $q\to p$ is nonzero in $\X_j$ and $p\in F_j(t)$.  In particular
\[
 q_j\leq p_j<t_1,
\]
so $q\in F_j(t)$.  Hence $F_j(t)$ is a sieve.
\end{proof}

\begin{proposition}
\label{prop:one-step}
For $1\leq j\leq m-1$,
\[
 \Db{\C_{j-1}(\Omega(H))}\simeq\Db{\C_j(\Omega(H))}.
\]
\end{proposition}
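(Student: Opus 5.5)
The plan is to realize the pair $\C_{j-1}(\Omega(H))$, $\C_j(\Omega(H))$ as the pair $\G(\X_j,Y_j,F_j)$, $\Gs(\X_j,Y_j,F_j)$ of Theorem \ref{thm:relative}, and then read off the derived equivalence. By Lemma \ref{lem:Fj}, the family $F_j$ is an order-preserving family of full sieves in $\X_j$, so the setting of Section \ref{sec:relative} applies.

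\textbf{Hypotheses.} Order $X_j$ by any linear extension of the coordinatewise order. A nonzero $\Hom_{\X_j}(p,p')$ forces $p\leq p'$ coordinatewise, and every diagonal algebra is $\kk$. Hence $\X_j$ is homologically directed, and Corollary \ref{cor:directed-relative} supplies the finite-global-dimension hypotheses on $\G$ and on each $F_j(t)$.

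\textbf{Objects.} I will show that $(p,t)\mapsto z=(p,t)$ is a bijection from $\operatorname{Ob}\G$ onto $\Omega(H)$. The constraints defining $\Omega(H)$ in \eqref{eq:staircase} are a strictly increasing condition together with coordinatewise upper bounds $z_i\leq H_i$. Given $p\in X_j$ and $t\in Y_j$, the concatenation lies in $\Omega(H)$ exactly when $p_j<t_1$, that is, when $p\in F_j(t)$. Conversely, every $z\in\Omega(H)$ splits into a prefix in $X_j$ and a tail in $Y_j$ with $p_j<t_1$.

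\textbf{Morphisms of $\G$.} Write $x=(p,t)$ and $y=(p',t')$. The space $\Hom_{\G}(x,y)$ is $\Hom_{\X_j}(p,p')$ when $t\leq t'$, and zero otherwise. By \eqref{eq:Xj-hom}, this is one-dimensional exactly when:
\begin{itemize}
\item $x_i\leq y_i$ for all $i$;
\item $y_i<x_{i+1}$ for $i<j$.
\end{itemize}
This is precisely the support condition \eqref{eq:Cj-hom} for $\C_{j-1}$. For composition, the $Y_j$-condition $t\leq t''$ is automatic from transitivity. So the composite in $\G$ is the $\X_j$-composite, which is $f_{p''p}$ or $0$ according to the outer-pair rule for indices $i<j$. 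This is exactly the $\C_{j-1}$ rule \eqref{eq:Cj-composition}. Hence $\G\cong\C_{j-1}(\Omega(H))$ as linear categories.

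\textbf{Morphisms of $\Gs$.} Definition \ref{def:Gamma-sharp} adds the condition $p'\in F_j(t)$, i.e. $y_j<x_{j+1}$, which is exactly the extra interlacing inequality defining $\C_j$. For composable $x\to y\to z$, the $\star$-product is nonzero precisely when two conditions hold:
\begin{itemize}
\item $p''\in F_j(t)$, i.e. $z_j<x_{j+1}$;
\item the $\X_j$-composite is nonzero, i.e. $z_i<x_{i+1}$ for $i<j$.
\end{itemize}
Together with the automatic coordinatewise inequalities, this says exactly $\Hom_{\C_j}(x,z)\neq0$. So $\Gs\cong\C_j(\Omega(H))$, with the forced-zero composites matching.

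\textbf{Conclusion and main obstacle.} Theorem \ref{thm:relative} then gives
\[
\Db{\C_{j-1}(\Omega(H))}\simeq\Db{\G}\simeq\Db{\Gs}\simeq\Db{\C_j(\Omega(H))},
\]
where the outer equivalences come from the category isomorphisms, which induce algebra isomorphisms of $\operatorname{Alg}$. I expect the main obstacle to be the careful identification of compositions: one must check that every zero case in $\C_j$ arises either from the $\X_j$ outer-pair rule or from the fibre test $p''\notin F_j(t)$. One must also check that no extra zero is introduced by the $Y_j$ component, which holds because the incidence order on $Y_j$ is transitive.
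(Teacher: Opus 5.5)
Your proposal is correct and follows the paper's proof. You apply Theorem~\ref{thm:relative} to $(\X_j,Y_j,F_j)$ and verify its homological hypotheses with Corollary~\ref{cor:directed-relative}. You then identify $\G\cong\C_{j-1}(\Omega(H))$ and $\Gs\cong\C_j(\Omega(H))$ through the boundary inequality $p_j<t_1$ and the fibre test $y_j<x_{j+1}$; your explicit check of both composition rules fills in what the paper only asserts.
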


\begin{proof}
Apply Theorem \ref{thm:relative} to the directed Schur category $\X_j$, the tail
poset $Y_j$, and the sieve family $F_j$.  Since every directed Schur
category is homologically directed, Corollary \ref{cor:directed-relative}
verifies the finite-global-dimension hypotheses of Theorem \ref{thm:relative} for
$\X_j$, all the fibres $F_j(t)$, and the resulting total category.
The objects of the resulting
category $\G$ satisfy the strict equality
\[
 \operatorname{Ob}\G(\X_j,Y_j,F_j)
 =
 \{(p,t):(p,t)\in\Omega(H)\}.
\]
Indeed, the condition $p\in F_j(t)$ is exactly the missing boundary
inequality $p_j<t_1$. A product morphism $(p,t)\to(p',t')$ exists exactly when
 $p_i\leq p'_i,\quad p'_i<p_{i+1}\ (i<j)$, and $t\leq t'$.
These are the defining conditions for
$\C_{j-1}(\Omega(H))$.

In $\Gs$, the extra condition is $p'\in F_j(t)$, or equivalently
$p'_j<t_1$.
For the full tuples $x=(p,t)$ and $y=(p',t')$, this is exactly the new
interlacing inequality $y_j<x_{j+1}$.
The composition rule in $\Gs$ agrees with
\eqref{eq:Cj-composition}.  Hence
$\Gs\cong\C_j(\Omega(H))$, and the result follows from Theorem 
\ref{thm:relative}.
\end{proof}

\begin{theorem}
\label{thm:staircase}
For every staircase $\Omega(H)$,
\begin{equation}
 \Db{\kk\Omega(H)}\simeq\Db{\C_{m-1}(\Omega(H))}.
\label{eq:staircase-equivalence}
\end{equation}
More precisely, \eqref{eq:staircase-equivalence} is the composite of
$m-1$ explicit tilting equivalences
\[
 \C_0\simeq_{\mathrm{der}}\C_1
 \simeq_{\mathrm{der}}\cdots
 \simeq_{\mathrm{der}}\C_{m-1}.
\]
\end{theorem}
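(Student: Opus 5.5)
The plan is to chain the one-step equivalences of Proposition \ref{prop:one-step}. The identification $\C_0(\Omega(H))=\kk\Omega(H)$ was already observed after Definition \ref{def:Cj}. For each $1\leq j\leq m-1$, Proposition \ref{prop:one-step} gives a triangle equivalence
\[
 \Db{\C_{j-1}(\Omega(H))}\simeq\Db{\C_j(\Omega(H))}.
\]
This equivalence is induced by the explicit tilting object
\[
 T^{(j)}=\bigoplus_{t\in Y_j}\ \bigoplus_{p\in F_j(t)}(\iota_t)_*\Hom_{F_j(t)}(p,-)
\]
of Theorem \ref{thm:relative}, applied to the data $(\X_j,Y_j,F_j)$. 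Composing these $m-1$ equivalences yields \eqref{eq:staircase-equivalence}. When $m=1$ there is nothing to prove, since then $\C_{m-1}=\C_0=\kk\Omega(H)$.

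The substantive content sits inside Proposition \ref{prop:one-step}, so I would first make sure the identifications used there are genuinely isomorphisms of linear categories, not merely bijections on objects. There are two identifications to check.

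\begin{itemize}
\item $\G(\X_j,Y_j,F_j)\cong\C_{j-1}(\Omega(H))$. On objects, a pair $(p,t)$ with $p\in X_j$, $t\in Y_j$ and $p_j<t_1$ is exactly an element of $\Omega(H)$. The constraints $p_i\le H_i$ and $t_\ell\le H_{j+\ell}$ are built into $X_j$ and $Y_j$, and monotonicity splits into the monotonicity within $p$, within $t$, and the junction condition $p_j<t_1$. On morphisms, $\Hom_\G((p,t),(p',t'))$ is nonzero exactly when the prefix conditions of $\X_j$ hold and $t\le t'$. These are precisely the conditions of $\C_{j-1}$: coordinatewise inequalities everywhere, plus interlacing $y_i<x_{i+1}$ for $i\le j-1$, all of which involve prefix coordinates only.
\item $\Gs\cong\C_j(\Omega(H))$. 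The extra test $p'\in F_j(t)$ reads $p'_j<t_1$, which is exactly the new interlacing inequality $y_j<x_{j+1}$.
\end{itemize}

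The main point to verify is that the two composition rules agree. A product $f_{zy}f_{yx}$ in $\Gs$ is nonzero only when two things hold. First, the $\X_j$-composite is nonzero, which is the outer-pair test on the prefix inequalities with $i<j$. Second, $z$'s prefix lies in $F_j$ of $x$'s tail, which is $z_j<x_{j+1}$. Together with $t\le t''$ from transitivity in $Y_j$, these say exactly that $\Hom_{\C_j}(x,z)\neq0$, matching \eqref{eq:Cj-composition}. With this check, $\Alg(\Gs)\cong\Alg(\C_j)$ as algebras.

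The hypotheses of Theorem \ref{thm:relative} follow from Corollary \ref{cor:directed-relative}. The reason is that $\X_j$ is directed: any linear extension of the coordinatewise order works, since a nonzero map $p\to p'$ forces $p\le p'$ coordinatewise. Moreover, $\X_j$ has endomorphism algebras $\kk$. I expect no real obstacle beyond this bookkeeping of the composition rule.

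For the final assertion of the theorem, I would note that $\C_{m-1}(\Omega(H))$ has morphisms exactly when $x\preceq y$ in the sense of \eqref{eq:interlace}, with composition \eqref{eq:higher-composition}. Take $N\ge H_m$. Because $\Omega(H)$ is a subset of $\os_m(N)$, the category $\C_{m-1}$ is the full subcategory of the higher Auslander category on $\Omega(H)$. Hence its category algebra is $eAe$, where $A$ is the higher Auslander algebra on $\os_m(N)$ and $e$ is the idempotent $e=\sum_{z\in\Omega(H)}e_z$.
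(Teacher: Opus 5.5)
Your proposal is correct and follows the paper's argument: starting from $\C_0(\Omega(H))=\kk\Omega(H)$, you compose the $m-1$ one-step tilting equivalences of Proposition~\ref{prop:one-step}, each induced by the Kan-extension tilting object of Theorem~\ref{thm:relative} for the data $(\X_j,Y_j,F_j)$. Your additional checks of $\G(\X_j,Y_j,F_j)\cong\C_{j-1}(\Omega(H))$ and $\Gs\cong\C_j(\Omega(H))$ (including the composition rule), and of the corner description of $\C_{m-1}(\Omega(H))$, match what the paper verifies in Propositions~\ref{prop:one-step} and~\ref{prop:corner}.
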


\begin{proof}
The identity $\C_0=\kk\Omega(H)$ and
Proposition \ref{prop:one-step} give the chain of equivalences.
\end{proof}

\subsection{Higher Auslander corners}

Choose $N$ so that $\Omega(H)\subseteq\os_m(N)$, and let $A$ be the
category algebra of the higher Auslander category on $\os_m(N)$ with
\eqref{eq:higher-hom} and \eqref{eq:higher-composition}.  Let $e_{\Omega}=\sum_{z\in\Omega(H)}e_z$.

\begin{proposition}
\label{prop:corner}
There is an isomorphism of algebras
\[
 \Alg(\C_{m-1}(\Omega(H)))
 \cong e_{\Omega}Ae_{\Omega}.
\]
Consequently, $\Db{\kk\Omega(H)}\simeq\Db{e_{\Omega}Ae_{\Omega}}$.
\end{proposition}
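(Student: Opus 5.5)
The idempotent corner $e_\Omega Ae_\Omega$ has a $\kk$-basis consisting of the basis elements $f_{yx}$ of $A$ with $x,y\in\Omega(H)$ and $x\preceq y$. Multiplication is the restriction of the multiplication of $A$. So the plan is to match $\C_{m-1}(\Omega(H))$ with the full subcategory of the higher Auslander category on the vertex set $\Omega(H)\subseteq\os_m(N)$, basis element by basis element and product by product, and then deduce the derived statement from Theorem \ref{thm:staircase}.

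\emph{Morphism spaces.} For $x,y\in\Omega(H)$, the defining conditions of $\Hom_{\C_{m-1}}(x,y)$ in \eqref{eq:Cj-hom} with $j=m-1$ are $x_i\leq y_i$ for all $i$ and $y_i<x_{i+1}$ for $1\leq i\leq m-1$. By \eqref{eq:interlace} these say exactly $x\preceq y$. Hence both $\Hom_{\C_{m-1}}(x,y)$ and $e_yAe_x=\Hom(x,y)$ from \eqref{eq:higher-hom} are $\kk f_{yx}$ when $x\preceq y$ and zero otherwise. We define the linear map $\Alg(\C_{m-1}(\Omega(H)))\to e_\Omega Ae_\Omega$ by $f_{yx}\mapsto f_{yx}$; it is bijective on bases.

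\emph{Multiplication and units.} For composable basis morphisms $x\to y\to z$ in $\Omega(H)$, \eqref{eq:Cj-composition} gives $f_{zx}$ if $\Hom_{\C_{m-1}}(x,z)\neq0$, i.e. if $x\preceq z$, and zero otherwise. This agrees with \eqref{eq:higher-composition}. Products of non-composable basis elements vanish on both sides: in the category algebra by the convention \eqref{eq:category-algebra}, and in $A$ because $e_ye_{y'}=0$ for $y\neq y'$. Finally, the identity $\sum_{z\in\Omega}f_{zz}$ maps to $e_\Omega$. The key observation is that morphisms and composition in $A$ between vertices of $\Omega(H)$ never route through vertices outside $\Omega(H)$, since the rule \eqref{eq:higher-composition} is a statement about the endpoints alone. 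Therefore the corner equals the category algebra of the full subcategory, and the map is an algebra isomorphism.

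\emph{Derived consequence.} Theorem \ref{thm:staircase} gives $\Db{\kk\Omega(H)}\simeq\Db{\C_{m-1}(\Omega(H))}$. Modules over the finite linear category $\C_{m-1}(\Omega(H))$ are the same as finite-dimensional modules over its category algebra, so by the isomorphism above the right-hand side is $\Db{e_\Omega Ae_\Omega}$.

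I expect no real obstacle. The only point needing care is convention bookkeeping: check that $e_yAe_x=\Hom(x,y)$ in the paper's multiplication convention (rows are targets) on both sides, so that we get an isomorphism rather than an anti-isomorphism. Also check that the choice of $N$ is irrelevant, because the definitions involve only the vertices in $\Omega(H)$.
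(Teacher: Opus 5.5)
Your proof is correct and follows the paper's argument: both identify the Hom condition of $\C_{m-1}(\Omega(H))$ with the interlacing relation $\preceq$ and its outer-pair composition rule with \eqref{eq:higher-composition}, and the derived statement then follows from Theorem~\ref{thm:staircase}. The only difference is that the paper also invokes the fact that $\Omega(H)$ is a coordinate lower ideal, to rule out paths through outside vertices. You correctly note that this is unnecessary: composition in $A$ is determined by the endpoints alone, so $e_\Omega A e_\Omega$ is automatically the category algebra of the full subcategory on $\Omega(H)$.
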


\begin{proof}
The Hom condition in $\C_{m-1}$ is exactly
\eqref{eq:interlace}; the composition rule is
\eqref{eq:higher-composition}.  Thus it remains only to note that a path in
the higher Auslander quiver between two vertices of $\Omega(H)$ cannot
leave $\Omega(H)$.  Every intermediate vertex is coordinatewise bounded by
the target vertex, and $\Omega(H)$ is a coordinate lower ideal.  Therefore
the corner introduces no additional paths through vertices outside the
staircase.
\end{proof}

\begin{corollary}
\label{cor:rectangular-lattice}
Let $\Omega=\os_m(N)$ with the coordinatewise order.  Then
\[
 \Db{\kk\Omega}\simeq\Db{A},
\]
where $A$ is the corresponding higher Auslander algebra of type $A$.
Since $\os_m(N)\cong J_{m,N-m}$, this recovers the rectangular-lattice
derived equivalence of \cite[Theorem~E]{Gottesman}.
\end{corollary}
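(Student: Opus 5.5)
The plan is to specialize Proposition \ref{prop:corner} to the full staircase. Take $H=(N-m+1,N-m+2,\ldots,N)$. The constraint $z_i\leq N-m+i$ is automatic for strictly increasing sequences with $z_m\leq N$. Hence $\Omega(H)=\os_m(N)$, and this set is finite and nonempty when $1\leq m\leq N$. With this choice the idempotent $e_\Omega$ is the identity $1_A$, so the corner is all of $A$, i.e.\ $e_\Omega Ae_\Omega=A$. Proposition \ref{prop:corner}, or directly Theorem \ref{thm:staircase} together with the observation that $\C_{m-1}(\os_m(N))$ is literally the higher Auslander category defined by \eqref{eq:higher-hom} and \eqref{eq:higher-composition}, then gives $\Db{\kk\os_m(N)}\simeq\Db{A}$.

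For the comparison with Gottesman, I would exhibit the poset isomorphism $\os_m(N)\cong J_{m,N-m}$. Here $J_{m,N-m}$ denotes the lattice of order ideals of the product of chains $[m]\times[N-m]$, equivalently the lattice of lattice paths in an $m\times(N-m)$ rectangle. The map uses the shift of the Remark after the definition of $\os_m(N)$: send $z$ to $u_i=z_i-i$, with $0\leq u_1\leq\cdots\leq u_m\leq N-m$. The weakly increasing sequence $u$ is the row-length profile of the order ideal $\{(i,k):k\leq u_i\}$, read after reversing the row indices so that it becomes a partition inside the $m\times(N-m)$ box. The coordinatewise order on $z$ matches the coordinatewise order on $u$, which matches inclusion of ideals. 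Hence $\kk\os_m(N)\cong\kk J_{m,N-m}$ as incidence categories.

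Finally, I would use Remark \ref{rem:index-conversion} to match $A$ with Gottesman's higher Auslander algebra. Put $n=N-m+1$. Then $A$ is Xing's $A_n^m$, which is Gottesman's $A_n^{m-1}$, and \cite[Theorem~E]{Gottesman} relates exactly this algebra to the incidence algebra of the lattice of order ideals of the rectangular grid.

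The main obstacle is bookkeeping rather than mathematics. First, the rectangle dimensions $m\times(N-m)$ must match Gottesman's grid, up to the symmetry $J_{a,b}\cong J_{b,a}$. Second, one has to check whether Gottesman's statement involves $A$ or $A^{\op}$, or the lattice or its opposite. I expect to resolve this by noting that $J_{m,N-m}$ is self-dual via complementation, and that \cite[Theorem~E]{Gottesman} is stated up to these conventions. Since our equivalence is proved independently, "recovers" only requires that the two statements agree after this index conversion.
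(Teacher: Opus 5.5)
Your proposal is correct and follows the paper's argument. The paper gives no separate proof: the corollary is the special case $\Omega(H)=\os_m(N)$ of Proposition \ref{prop:corner} (equivalently Theorem \ref{thm:staircase}), where $e_\Omega=1_A$ and $\C_{m-1}(\os_m(N))$ is literally the higher Auslander category. Your poset isomorphism $z\mapsto(z_i-i)_i$ onto partitions in an $m\times(N-m)$ box, together with Remark \ref{rem:index-conversion}, fills in the Gottesman comparison that the paper only asserts; any opposite-convention mismatch is absorbed by the self-duality of $J_{m,N-m}$, or by $A\cong A^{\op}$ from Lemma \ref{lem:A-self-op}.
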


\section{Rational Dyck staircases and Xing's corner}
\label{sec:dyck}

\subsection{Coordinates for paths}
\label{subsec:dyck-coordinates}

Throughout this section, fix integers $a,b\geq2$. If $a=1$ or $b=1$,
the rational Dyck poset is a singleton and will always be treated
directly, without dimension-zero higher Auslander notation.

Let $\mathcal L_{a,b}$ denote the set of all north-east lattice paths from
$(0,0)$ to $(a,b)$ in the $a\times b$ rectangle. Every such path has
$a$ horizontal steps and $b$ vertical steps. Label its $(a+b)$ steps
consecutively by $1,2,\ldots,a+b$,
and write $c(\ell)=(x_1,\ldots,x_a)$
for the increasing sequence of the labels of its horizontal steps.
Then $1\leq x_1<\cdots<x_a\leq a+b$.
Conversely, every strictly increasing $a$-tuple satisfying these
inequalities determines a unique path in $\mathcal L_{a,b}$.

Immediately before the $i$-th horizontal step, the path has taken
$i-1$ horizontal steps and $x_i-i$ vertical steps. Hence it is at the
point $(i-1,x_i-i)$.
Immediately after the same horizontal step, it is at $(i,x_i-i)$.

\subsection{The above- and below-diagonal Dyck posets.}
\label{sub: the above and blow-dia-Dyck-p}

Let $\Delta_{a,b}$ be the diagonal segment joining $(0,0)$ to $(a,b)$.
Its equation is
\[
q=\frac ba p.
\]
For a point $(p,q)$ in the rectangle, define its signed displacement
from the diagonal by
\[
\delta_{a,b}(p,q)
=
q-\frac ba p.
\]
Thus $(p,q)$ lies weakly above the diagonal if
$\delta_{a,b}(p,q)\geq0$, and it lies weakly below the diagonal if
$\delta_{a,b}(p,q)\leq0$.

We define
\[
\Dyck^{\mathrm{above}}_{a,b}
=
\left\{
\ell\in L_{a,b}:
\delta_{a,b}(p,q)\geq0
\text{ for every point }(p,q)\text{ on }\ell
\right\}
\]
and
\[
\Dyck^{\mathrm{below}}_{a,b}
=
\left\{
\ell\in L_{a,b}:
\delta_{a,b}(p,q)\leq0
\text{ for every point }(p,q)\text{ on }\ell
\right\}.
\]
Therefore, $\Dyck^{\mathrm{above}}_{a,b}$ is the set of rational
$(a,b)$-Dyck paths staying weakly above the diagonal; $\Dyck^{\mathrm{below}}_{a,b}$ is the set of rational
$(a,b)$-Dyck paths staying weakly below the diagonal.

The superscript records the chosen side of the diagonal, while the
subscripts $a,b$ record the horizontal and vertical side lengths of
the rectangle.
We equip both sets with the coordinatewise order induced by the
horizontal-step coordinates. Namely, if
\[
c(\ell)=(x_1,\ldots,x_a),
\qquad
c(\ell')=(x'_1,\ldots,x'_a),
\]
then
\[
\ell\leq\ell'
\quad\Longleftrightarrow\quad
x_i\leq x'_i
\qquad
\text{for every }1\leq i\leq a.
\]
With this order,
$\Dyck^{\mathrm{above}}_{a,b}$ and
$\Dyck^{\mathrm{below}}_{a,b}$ are finite posets.

We next translate the two geometric conditions into inequalities for
the coordinates $x_i$.

For a below-diagonal path, the point immediately before the $i$-th
horizontal step must lie weakly below the diagonal. Hence
\[
x_i-i
\leq
\frac{b(i-1)}a.
\]
Since $x_i-i$ is an integer, this is equivalent to
\begin{equation}
x_i
\leq
i+\left\lfloor\frac{b(i-1)}a\right\rfloor
\qquad
(1\leq i\leq a).
\label{eq:dyck-bound}
\end{equation}
Thus, through the coordinate encoding $c$, we identify
\[
\Dyck^{\mathrm{below}}_{a,b}
=
\left\{
(x_1,\ldots,x_a):
\begin{array}{l}
1\leq x_1<\cdots<x_a\leq a+b,\\[2pt]
x_i\leq
i+\left\lfloor\dfrac{b(i-1)}a\right\rfloor
\text{ for every }i
\end{array}
\right\}.
\]

For an above-diagonal path, the point immediately after the $i$-th
horizontal step must lie weakly above the diagonal. Hence
\[
x_i-i
\geq
\frac{bi}{a}.
\]
Since $x_i-i$ is an integer, this is equivalent to
\begin{equation}
x_i
\geq
i+\left\lceil\frac{bi}{a}\right\rceil
\qquad
(1\leq i\leq a).
\label{eq:dyck-above-bound}
\end{equation}
Thus
\[
\Dyck^{\mathrm{above}}_{a,b}
=
\left\{
(x_1,\ldots,x_a):
\begin{array}{l}
1\leq x_1<\cdots<x_a\leq a+b,\\[2pt]
x_i\geq
i+\left\lceil\dfrac{bi}{a}\right\rceil
\text{ for every }i
\end{array}
\right\}.
\]

Xing uses the below-diagonal convention
$\Dyck^{\mathrm{below}}_{a,b}$, whereas
Chapoton-Ladkani-Rognerud use the above-diagonal convention
$\Dyck^{\mathrm{above}}_{a,b}$.

\begin{exm}
Let $(a,b)=(2,3)$. A path is determined by the positions
\[
1\leq x_1<x_2\leq5
\]
of its two horizontal steps.

For a below-diagonal path, \eqref{eq:dyck-bound} gives $x_1\leq1,x_2\leq3$.
Consequently, $\Dyck^{\mathrm{below}}_{2,3}
=\{(1,2),(1,3)\}$.
For an above-diagonal path, \eqref{eq:dyck-above-bound} gives $x_1\geq3,x_2\geq5$.
Consequently, $\Dyck^{\mathrm{above}}_{2,3}=\{(3,5),(4,5)\}$.
\end{exm}

\begin{exm}

For $(a,b)=(3,4)$, the inequalities in \eqref{eq:dyck-bound} give
\[
 x_1\leq1,\qquad x_2\leq3,\qquad x_3\leq5.
\]
Hence
\begin{equation}
 \Dyck^{\mathrm{below}}_{3,4}
 =
 \{(1,2,3),(1,2,4),(1,2,5),(1,3,4),(1,3,5)\}.
\label{eq:34-below-paths}
\end{equation}
The five paths are displayed in Figure \ref{fig:34-below-paths}.  The dashed
line is the diagonal $q=4p/3$, and every blue path remains weakly below it.
Here $s=\lceil3/4\rceil=1$, so deleting the forced first coordinate and
subtracting $1$ from the remaining coordinates gives
\[
 \beta(x_1,x_2,x_3)=(x_2-1,x_3-1).
\]

\begin{figure}[htbp]
\centering
\begin{tikzpicture}[
 x=0.62cm,y=0.62cm,
 pathline/.style={blue!75!black,line width=1.15pt},
 diagonal/.style={black,dashed,line width=0.45pt},
 gridline/.style={gray!45,line width=0.25pt}]

 \begin{scope}[xshift=0cm]
  \draw[step=1,gridline] (0,0) grid (3,4);
  \draw[diagonal] (0,0)--(3,4);
  \draw[pathline] (0,0)--(3,0)--(3,4);
  \node[align=center,font=\scriptsize] at (1.5,-0.72)
   {$c=(1,2,3)$\\$\beta(c)=(1,2)$};
 \end{scope}

 \begin{scope}[xshift=2.45cm]
  \draw[step=1,gridline] (0,0) grid (3,4);
  \draw[diagonal] (0,0)--(3,4);
  \draw[pathline] (0,0)--(2,0)--(2,1)--(3,1)--(3,4);
  \node[align=center,font=\scriptsize] at (1.5,-0.72)
   {$c=(1,2,4)$\\$\beta(c)=(1,3)$};
 \end{scope}

 \begin{scope}[xshift=4.90cm]
  \draw[step=1,gridline] (0,0) grid (3,4);
  \draw[diagonal] (0,0)--(3,4);
  \draw[pathline] (0,0)--(2,0)--(2,2)--(3,2)--(3,4);
  \node[align=center,font=\scriptsize] at (1.5,-0.72)
   {$c=(1,2,5)$\\$\beta(c)=(1,4)$};
 \end{scope}

 \begin{scope}[xshift=7.35cm]
  \draw[step=1,gridline] (0,0) grid (3,4);
  \draw[diagonal] (0,0)--(3,4);
  \draw[pathline] (0,0)--(1,0)--(1,1)--(3,1)--(3,4);
  \node[align=center,font=\scriptsize] at (1.5,-0.72)
   {$c=(1,3,4)$\\$\beta(c)=(2,3)$};
 \end{scope}

 \begin{scope}[xshift=9.80cm]
  \draw[step=1,gridline] (0,0) grid (3,4);
  \draw[diagonal] (0,0)--(3,4);
  \draw[pathline] (0,0)--(1,0)--(1,1)--(2,1)--(2,2)--(3,2)--(3,4);
  \node[align=center,font=\scriptsize] at (1.5,-0.72)
   {$c=(1,3,5)$\\$\beta(c)=(2,4)$};
 \end{scope}
\end{tikzpicture}
\caption{The five below-diagonal rational $(3,4)$-Dyck paths, labelled by
their horizontal-step coordinates $c(\ell)$ and their images
$\beta(c(\ell))$.  The $\beta$-images are the vertices of
$\Omega_{3,4}$; its Hasse quiver is drawn in Section \ref{sec:examples}.}
\label{fig:34-below-paths}
\end{figure}

Thus Figure \ref{fig:34-below-paths} gives the geometric path model for the
five vertices of the $(3,4)$ staircase studied explicitly in
Section \ref{sec:examples}. 
\end{exm}

\subsection{Deleting the forced prefix}

Set
\begin{equation}
 s=\left\lceil\frac ab\right\rceil,\qquad m=a-s.
\label{eq:s-m}
\end{equation}

\begin{lemma}
\label{lem:forced-prefix}
For a below-diagonal rational Dyck path,
\[
 x_i=i\qquad(1\leq i\leq s).
\]
\end{lemma}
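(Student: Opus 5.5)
We combine the upper bound \eqref{eq:dyck-bound} with the lower bound forced by strict increase. Fix a below-diagonal path with coordinates $1\leq x_1<\cdots<x_a$. Since the coordinates are strictly increasing integers starting at least at $1$, induction gives $x_i\geq i$ for every $i$. It therefore suffices to prove $x_i\leq i$ for $1\leq i\leq s$. By \eqref{eq:dyck-bound}, this reduces to showing
\[
 \left\lfloor\frac{b(i-1)}a\right\rfloor=0\qquad(1\leq i\leq s),
\]
that is, $b(i-1)<a$ for these $i$.

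Because $i-1$ increases with $i$, it is enough to check the largest index $i=s$, namely $b(s-1)<a$. We use the defining property of the ceiling: for the integer $s=\lceil a/b\rceil$ we have $s-1<a/b\leq s$. Multiplying $s-1<a/b$ by $b>0$ gives $b(s-1)<a$. Hence $0\leq b(i-1)/a<1$ for all $1\leq i\leq s$, and the floor vanishes. Combined with $x_i\geq i$, this gives $x_i=i$.

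Two remarks complete the argument. First, $s\leq a$, since $a/b\leq a$ when $b\geq1$, so the indices $1\leq i\leq s$ are genuine coordinates of the path. Second, under the standing assumption $a,b\geq2$ we have $s<a$, so $m=a-s\geq1$; this is needed for the subsequent staircase, but not for the lemma itself.

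The only delicate step is the ceiling inequality $s-1<a/b$, which must be strict; the rest is immediate. Geometrically, the lemma says that while the diagonal has height below $1$, a below-diagonal path cannot yet have taken a vertical step.
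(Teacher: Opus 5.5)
Your proposal is correct and follows essentially the same argument as the paper: the strict ceiling inequality $s-1<a/b$ gives $b(i-1)<a$, so the floor in \eqref{eq:dyck-bound} vanishes and $x_i\leq i$, while strict increase and positivity give $x_i\geq i$. Your extra remarks on $s\leq a$ and $s<a$ are accurate but not needed for the lemma.
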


\begin{proof}
If $1\leq i\leq s$, then $i-1\leq s-1<\frac ab$,
so $b(i-1)<a$ and $\left\lfloor\frac{b(i-1)}a\right\rfloor=0$.
Equation \eqref{eq:dyck-bound} gives $x_i\leq i$.  Strict increase and
positivity give $x_i\geq i$, hence equality.
\end{proof}

Define
\begin{equation}
 \beta(x_1,\ldots,x_a)
 =(x_{s+1}-s,\ldots,x_a-s).
\label{eq:beta}
\end{equation}
For $1\leq j\leq m$, put
\begin{equation}
 H_j^{a,b}
 =
 j+\left\lfloor\frac{b(s+j-1)}a\right\rfloor.
\label{eq:Dyck-H}
\end{equation}

\begin{proposition}
\label{prop:dyck-staircase}
For $a,b\geq2$, the map $\beta$ is an order isomorphism from the poset of below-diagonal
rational $(a,b)$-Dyck paths to the staircase
\begin{equation}
 \Omega_{a,b}
 =
 \{1\leq z_1<\cdots<z_m:
 z_j\leq H_j^{a,b}\}.
\label{eq:Omega-ab}
\end{equation}
\end{proposition}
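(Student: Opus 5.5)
The plan is to check three things: that $\beta$ lands in $\Omega_{a,b}$, that it is a bijection onto it, and that it preserves and reflects the coordinatewise order. All of this follows from the coordinate description \eqref{eq:dyck-bound} together with Lemma \ref{lem:forced-prefix}.

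\emph{Well-definedness.} Let $x=(x_1,\ldots,x_a)$ be a below-diagonal path. Since $a,b\geq2$, we have $s=\lceil a/b\rceil<a$, so $m\geq1$. By Lemma \ref{lem:forced-prefix}, $x_s=s$, and strict increase gives $x_{s+1}\geq s+1$. Hence $z_j=x_{s+j}-s$ satisfies $1\leq z_1<\cdots<z_m$. Applying \eqref{eq:dyck-bound} with $i=s+j$ gives
\[
x_{s+j}-s\leq j+\left\lfloor\frac{b(s+j-1)}a\right\rfloor=H_j^{a,b},
\]
so $\beta(x)\in\Omega_{a,b}$.

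\emph{Bijectivity.} By Lemma \ref{lem:forced-prefix}, every below-diagonal path has prefix $(1,\ldots,s)$, so $\beta$ is injective. For surjectivity, take $z\in\Omega_{a,b}$ and set $x=(1,\ldots,s,z_1+s,\ldots,z_m+s)$. This tuple is strictly increasing because $z_1+s\geq s+1$.

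\begin{itemize}
\item The bounds \eqref{eq:dyck-bound} for $i\leq s$ hold because $\lfloor b(i-1)/a\rfloor=0$ there, as in the proof of Lemma \ref{lem:forced-prefix}, and $x_i=i$.
\item The bounds for $i=s+j$ are exactly $z_j\leq H_j^{a,b}$.
\item The constraint $x_a\leq a+b$ follows from the $i=a$ bound, since $\lfloor b(a-1)/a\rfloor\leq b$ gives $x_a\leq a+b$.
\end{itemize}

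So $x$ is a below-diagonal Dyck path and $\beta(x)=z$.

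\emph{Order.} Two paths agree on the first $s$ coordinates. Therefore $x\leq x'$ coordinatewise if and only if $x_{s+j}\leq x'_{s+j}$ for all $j$, which holds if and only if $\beta(x)\leq\beta(x')$. So $\beta$ and its inverse are both monotone.

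The only point needing care is the boundary bookkeeping. One must confirm that the cap $x_a\leq a+b$ is implied by the Dyck bound, and that the hypothesis $a,b\geq2$ ensures $m\geq1$ and that the forced-prefix lemma applies. Once these are checked, the rest is index translation.
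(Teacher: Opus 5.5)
Your proof is correct and follows the paper's argument. It uses the forced prefix from Lemma \ref{lem:forced-prefix}, rewrites \eqref{eq:dyck-bound} at $i=s+j$ as $z_j\leq H_j^{a,b}$, inverts $\beta$ by adjoining $1,\ldots,s$ and shifting by $s$, and observes that the fixed prefix does not affect the coordinatewise order. Your extra checks ($m\geq1$ and $x_a\leq a+b$) are bookkeeping the paper leaves implicit.
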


\begin{proof}
By Lemma \ref{lem:forced-prefix}, the first $s$ coordinates are fixed.  For
$z_j=x_{s+j}-s$, equation \eqref{eq:dyck-bound} becomes $z_j
 \leq j+\left\lfloor\frac{b(s+j-1)}a\right\rfloor
 =H_j^{a,b}$.
Conversely, any strictly increasing tuple satisfying these bounds reconstructs
a unique Dyck coordinate tuple by adjoining $1,2,\ldots,s$ and adding $s$
to the remaining coordinates.  The boundary inequality
\[
 s<x_{s+1}=z_1+s
\]
follows from $z_1\geq1$.  Since both path orders are coordinatewise, the
bijection preserves and reflects order.
\end{proof}

\subsection{Identification of the corner}

Let $A'=A_{b+1}^{\,a-s}$ in Xing's notation, and let $e=\sum_{\ell\in\Dyck^{\mathrm{below}}_{a,b}}e_{\beta(\ell)}$.
For $a,b\geq2$, define the staircase corner $B^{\mathrm{st}}_{a,b}=eA'e$.
When $\gcd(a,b)=1$, Xing defines $B_0$ as the endomorphism algebra of
the Dyck-indexed projective summand and proves
\begin{equation}
 B_0\cong eA'e=B^{\mathrm{st}}_{a,b}
\label{eq:Xing-corner}
\end{equation}
in \cite[Proposition~4.33]{Xing}.

\begin{proposition}
\label{prop:B0-C}
For $a,b\geq2$, under the bijection $\beta$,
\[
 B^{\mathrm{st}}_{a,b}
 \cong
 \operatorname{Alg}\!\left(\C_{m-1}(\Omega_{a,b})\right).
\]
If $\gcd(a,b)=1$, this category algebra is Xing's $B_0$.
\end{proposition}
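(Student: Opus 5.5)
The plan is to reduce the statement to Proposition \ref{prop:corner}, applied to the staircase $\Omega(H)=\Omega_{a,b}$ with $H=(H_1^{a,b},\ldots,H_m^{a,b})$ and with the ambient set $\os_m(N)$ for $N=b+m$. Xing's $A'=A_{b+1}^{\,a-s}$ has vertex set $\os_{a-s}\bigl((b+1)+(a-s)-1\bigr)=\os_m(b+m)$ by the convention recalled after \eqref{eq:higher-composition}. Its multiplication is the interlacing rule \eqref{eq:higher-hom}, \eqref{eq:higher-composition}.

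\textbf{Step 1: $\Omega_{a,b}$ is a staircase inside $\os_m(b+m)$.} By Proposition \ref{prop:dyck-staircase}, $\beta$ is an order isomorphism from $\Dyck^{\mathrm{below}}_{a,b}$ onto $\Omega_{a,b}$. It remains to check that $\Omega_{a,b}\subseteq\os_m(b+m)$, which reduces to the bound $H_m^{a,b}\le b+m$. Indeed,
\[
 H_m^{a,b}=m+\left\lfloor\frac{b(a-1)}a\right\rfloor\le m+b-1.
\]
So the idempotent $e$ in the statement is exactly $e_{\Omega}=\sum_{z\in\Omega_{a,b}}e_z$ from Proposition \ref{prop:corner}. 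We also need $m\ge1$; this holds because $a,b\ge2$ gives $s=\lceil a/b\rceil<a$.

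\textbf{Step 2: apply Proposition \ref{prop:corner}.} Its proof only uses that $\Omega(H)$ is a coordinate lower ideal in $\os_m(N)$, which is the observation recorded in the staircase examples. It then gives
\[
 \operatorname{Alg}(\C_{m-1}(\Omega_{a,b}))\cong e_\Omega A' e_\Omega=B^{\mathrm{st}}_{a,b},
\]
where $\C_{m-1}$ Hom spaces and compositions coincide with the interlacing relation $\preceq$ and the forced-zero rule. The second assertion then follows by composing with \eqref{eq:Xing-corner}, i.e.\ \cite[Proposition~4.33]{Xing}.

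\textbf{Main obstacle: matching conventions.} The hard part is checking that Xing's labelling of the Dyck-indexed vertices of $A_{b+1}^{a-s}$ is the same $\beta$ with the same coordinate shift. One must also confirm that Xing's arrow and multiplication convention is not the opposite of ours. I would first recompute the $(3,4)$ example of Figure \ref{fig:34-below-paths} against Xing's description, vertex by vertex and relation by relation. If Xing's vertices turn out to be shifted by a constant, I would precompose $\beta$ with that translation; this preserves $\preceq$. If Xing composes in the opposite order, I would instead use the half-turn anti-isomorphism $z\mapsto(N+1-z_m,\ldots,N+1-z_1)$ of $\os_m(N)$. That map reverses $\preceq$, and one would then need to check that it carries $\Omega_{a,b}$ to the corresponding Dyck set.
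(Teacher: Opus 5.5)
Your proposal is correct and is essentially the paper's argument. The paper compares the interlacing Hom condition and the outer-pair composition rule of $A'$ with those of $\C_{m-1}(\Omega_{a,b})$ directly (in effect re-running the proof of Proposition~\ref{prop:corner}) and then cites \eqref{eq:Xing-corner}. You instead invoke Proposition~\ref{prop:corner} explicitly, and you also supply the containment $\Omega_{a,b}\subseteq\os_m(b+m)$ via $H_m^{a,b}\le m+b-1$ and the check $m\ge1$, which the paper leaves implicit. Your convention-matching concern is not needed here, since the paper takes $B_0\cong eA'e$, with $e$ defined through $\beta$, directly from \cite[Proposition~4.33]{Xing}.
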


\begin{proof}
The higher Auslander Hom spaces in $A'$ are described by
\eqref{eq:higher-hom}.  Thus for $x,y\in\Omega_{a,b}$,
\[
 \Hom_{eA'e}(x,y)\neq0
\quad\Longleftrightarrow\quad
x_1\leq y_1<x_2\leq y_2<\cdots<x_m\leq y_m.
\]
This is precisely the Hom condition in
$\C_{m-1}(\Omega_{a,b})$.  The compositions in $A'$ satisfy
\eqref{eq:higher-composition}, so they agree with
\eqref{eq:Cj-composition}.  This proves the first assertion.  Under
coprimality, \eqref{eq:Xing-corner} identifies the resulting corner with
$B_0$.
\end{proof}

\begin{theorem}
\label{thm:below-B0}
For integers $a,b\geq2$,
\begin{equation}
 \Db{\kk\Dyck^{\mathrm{below}}_{a,b}}\simeq\Db{B^{\mathrm{st}}_{a,b}}.
\label{eq:below-B0}
\end{equation}
When $\gcd(a,b)=1$, the algebra on the right is Xing's $B_0$.
\end{theorem}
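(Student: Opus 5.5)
The plan is to assemble Theorem \ref{thm:below-B0} from three results already in place: the order isomorphism of Proposition \ref{prop:dyck-staircase}, the staircase chain of Theorem \ref{thm:staircase}, and the corner identification of Proposition \ref{prop:B0-C}.

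\emph{Step 1: transport to a staircase.} By Proposition \ref{prop:dyck-staircase}, $\beta$ is an isomorphism of posets $\Dyck^{\mathrm{below}}_{a,b}\cong\Omega_{a,b}$. An isomorphism of finite posets induces an isomorphism of incidence categories, and hence of their module categories. This gives $\Db{\kk\Dyck^{\mathrm{below}}_{a,b}}\simeq\Db{\kk\Omega_{a,b}}$.

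Before applying the staircase theorem, I will check that $\Omega_{a,b}=\Omega(H^{a,b})$ is a finite nonempty staircase in the sense of \eqref{eq:staircase}, with $m=a-s\geq1$.

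\begin{itemize}
\item \emph{Length $m\geq1$.} Since $b\geq2$, we have $s=\lceil a/b\rceil<a$, so $m=a-s\geq1$.
\item \emph{Nonemptiness.} The tuple $(1,\ldots,m)$ lies in $\Omega_{a,b}$, because $H_j^{a,b}\geq j$.
\item \emph{Finiteness.} Each coordinate is bounded by $H_j^{a,b}$, so $\Omega_{a,b}$ is finite.
\end{itemize}

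\emph{Step 2: apply the staircase chain.} Theorem \ref{thm:staircase} gives $\Db{\kk\Omega_{a,b}}\simeq\Db{\C_{m-1}(\Omega_{a,b})}$. Here $\C_{m-1}(\Omega_{a,b})$ is a finite linear category, so its module category is equivalent to modules over its category algebra $\operatorname{Alg}(\C_{m-1}(\Omega_{a,b}))$, whose object idempotents sum to $1$.

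\emph{Step 3: identify the algebra.} Proposition \ref{prop:B0-C} identifies $\operatorname{Alg}(\C_{m-1}(\Omega_{a,b}))$ with $B^{\mathrm{st}}_{a,b}=eA'e$. Composing the three equivalences gives \eqref{eq:below-B0}. For the coprime case, \eqref{eq:Xing-corner} (Xing's Proposition 4.33) identifies $B^{\mathrm{st}}_{a,b}$ with $B_0$.

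\emph{Main obstacle.} The only delicate point is the indexing in Step 3. The corner idempotent $e$ is indexed by $\beta(\ell)$, and these must be genuine vertices of $A'=A^{a-s}_{b+1}$, whose vertex set is $\os_m(b+m)$. So I need $\Omega_{a,b}\subseteq\os_m(b+m)$, i.e. $H_m^{a,b}\leq b+m$. This holds because
\[
H_m^{a,b}=m+\lfloor b(a-1)/a\rfloor\leq m+b-1.
\]
With this inclusion, Proposition \ref{prop:corner} applies with $N=b+m$, and its lower-ideal argument guarantees that no extra paths through vertices outside $\Omega_{a,b}$ enter the corner.
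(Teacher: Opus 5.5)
Your proposal is correct and follows the paper's proof: the paper also just combines Proposition~\ref{prop:dyck-staircase}, Theorem~\ref{thm:staircase} and Proposition~\ref{prop:B0-C}, and uses Xing's Proposition~4.33 for the identification with $B_0$ in the coprime case. Your extra checks are correct and make explicit hypotheses that the paper leaves implicit, namely $m=a-s\geq1$, nonemptiness via $(1,\ldots,m)$, and $H_m^{a,b}=m+\lfloor b(a-1)/a\rfloor\leq b+m-1$, which gives $\Omega_{a,b}\subseteq\os_m(b+m)$ so that the corner idempotent uses genuine vertices of $A_{b+1}^{m}$.
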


\begin{proof}
Combine Proposition \ref{prop:dyck-staircase}, Theorem \ref{thm:staircase} and Proposition \ref{prop:B0-C}.
No coprimality hypothesis is needed for this equivalence.
\end{proof}

\subsection{Comparison of the above and below conventions}

We now compare the two Dyck posets defined in Subsection
\ref{subsec:dyck-coordinates}. Recall that Xing uses
$\Dyck^{\mathrm{below}}_{a,b}$, whereas
Chapoton--Ladkani--Rognerud use
$\Dyck^{\mathrm{above}}_{a,b}$.

Put $r=a+b$.
For a path with horizontal-step coordinate sequence $x=(x_1,\ldots,x_a)$,
define
\begin{equation}
\rho(x_1,\ldots,x_a)
=
(r+1-x_a,\ldots,r+1-x_1).
\label{eq:rho}
\end{equation}

Geometrically, $\rho$ is obtained by rotating the rectangle and the
path through an angle of $180^\circ$ and then reversing the direction
in which the resulting path is traversed. Equivalently, the step word
of $\rho(\ell)$ is obtained by reading the step word of $\ell$
backwards.

Indeed, if the horizontal steps of $\ell$ occur at positions
\[
x_1<x_2<\cdots<x_a,
\]
then, after reversing the word of length $r$, they occur at positions
\[
r+1-x_a<r+1-x_{a-1}<\cdots<r+1-x_1,
\]
which gives \eqref{eq:rho}.

The half-turn interchanges the two sides of the diagonal. More
explicitly, under the transformation
\[
(p,q)\longmapsto(a-p,b-q),
\]
the signed displacement from the diagonal changes according to
\[
\begin{split}
\delta_{a,b}(a-p,b-q)
&=
(b-q)-\frac ba(a-p)\\
&=
-\left(q-\frac ba p\right)\\
&=
-\delta_{a,b}(p,q).
\end{split}
\]
Thus points above the diagonal are sent to points below the diagonal,
and conversely. Reversing the direction of traversal does not change
the geometric image of the path. Therefore $\rho$ gives a bijection
\[
\rho:
\Dyck^{\mathrm{above}}_{a,b}
\longrightarrow
\Dyck^{\mathrm{below}}_{a,b}.
\]

Moreover, $\rho$ is an involution:
\[
\rho^2(x_1,\ldots,x_a)=(x_1,\ldots,x_a).
\]
Hence its inverse is again $\rho$.

In the example $(a,b)=(2,3)$, we have $r=5$ and $\rho(x_1,x_2)=(6-x_2,6-x_1)$.
Consequently, $\rho(3,5)=(1,3),\rho(4,5)=(1,2)$.
Thus the two above-diagonal paths are matched with the two
below-diagonal paths.

\begin{lemma}
\label{lem:above-below}
The map
\[
\rho:
\Dyck^{\mathrm{above}}_{a,b}
\longrightarrow
\Dyck^{\mathrm{below}}_{a,b}
\]
is an order anti-isomorphism. Equivalently, it is an order isomorphism
\[
\Dyck^{\mathrm{above}}_{a,b}
\xrightarrow{\sim}
\left(\Dyck^{\mathrm{below}}_{a,b}\right)^{\op}.
\]
Consequently, $\kk\Dyck^{\mathrm{above}}_{a,b}
\cong(\kk\Dyck^{\mathrm{below}}_{a,b})^{\op}$.
\end{lemma}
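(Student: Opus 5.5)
The proof is a direct coordinate computation with the reversal map $\rho$ of \eqref{eq:rho}. We already know from the discussion preceding the lemma that $\rho$ is a well-defined involutive bijection $\Dyck^{\mathrm{above}}_{a,b}\to\Dyck^{\mathrm{below}}_{a,b}$. It therefore remains to show that $\rho$ reverses the coordinatewise order, and then to translate this into a statement about incidence categories.

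\emph{Order reversal.} Let $x=c(\ell)$ and $x'=c(\ell')$ be paths in $\Dyck^{\mathrm{above}}_{a,b}$. Then
\[
\rho(x)_i=r+1-x_{a+1-i},\qquad \rho(x')_i=r+1-x'_{a+1-i}.
\]
Hence $\rho(x)_i\leq\rho(x')_i$ holds if and only if $x'_{a+1-i}\leq x_{a+1-i}$. As $i$ runs over $1,\ldots,a$, the index $a+1-i$ runs over the same set. Therefore
\[
\rho(x)\leq\rho(x') \iff x'\leq x .
\]
So $\rho$ both preserves and reflects the reversed order. Since $\rho$ is bijective and its inverse is $\rho$ itself (with the same computation applied on the below side), $\rho$ is an order anti-isomorphism, equivalently an order isomorphism $\Dyck^{\mathrm{above}}_{a,b}\cong(\Dyck^{\mathrm{below}}_{a,b})^{\op}$.

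\emph{Incidence categories.} For any finite poset $P$, the incidence category $\kk(P^{\op})$ is isomorphic to $(\kk P)^{\op}$ via $f_{yx}\mapsto f_{xy}$. Both categories have a one-dimensional $\Hom$ space exactly when $x\geq y$ in $P$. Composition also matches: $f_{zy}f_{yx}=f_{zx}$ in $\kk(P^{\op})$ corresponds to $f_{xy}\circ^{\op}f_{yz}=f_{yz}f_{xy}=f_{xz}$ in $(\kk P)^{\op}$ (up to the obvious relabelling). Moreover, an order isomorphism $P\cong P'$ induces an isomorphism of incidence categories $\kk P\cong\kk P'$, since these categories are determined by the order relation. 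Applying both facts with $P=\Dyck^{\mathrm{below}}_{a,b}$ gives
\[
\kk\Dyck^{\mathrm{above}}_{a,b}\cong(\kk\Dyck^{\mathrm{below}}_{a,b})^{\op},
\]
both as linear categories and, by passing to $\operatorname{Alg}$, as algebras.

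The only step requiring care is the well-definedness of $\rho$ on the Dyck subsets, namely that it sends above-diagonal paths to below-diagonal ones. This was already justified geometrically by the sign change $\delta_{a,b}(a-p,b-q)=-\delta_{a,b}(p,q)$, so no real obstacle remains.
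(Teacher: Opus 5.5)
Your proof is correct and follows the paper's argument: the same coordinatewise reversal computation (you obtain the biconditional directly since every step is reversible, where the paper proves one direction and gets the converse from $\rho^2=\id$), followed by the identification $\kk(P^{\op})\cong(\kk P)^{\op}$. One small notational slip: in your composition check the composite in $(\kk P)^{\op}$ should read $f_{yz}\circ^{\op}f_{xy}=f_{xy}f_{yz}=f_{xz}$, since $f_{yz}f_{xy}$ is not composable in $\kk P$ when $x\neq z$.
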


\begin{proof}
We have already shown that $\rho$ is a bijection between the two Dyck
path sets. It remains to determine its effect on their coordinatewise
orders.

Let $x=(x_1,\ldots,x_a),\qquad y=(y_1,\ldots,y_a)$
be elements of $\Dyck^{\mathrm{above}}_{a,b}$, and suppose that $x\leq y.$
By definition of the coordinatewise order,
\[
x_i\leq y_i
\qquad
\text{for every }1\leq i\leq a.
\]
For every $1\leq j\leq a$, this gives $x_{a+1-j}\leq y_{a+1-j}.$
Subtracting from $r+1$ reverses the inequality:
\[
r+1-x_{a+1-j}
\geq
r+1-y_{a+1-j}.
\]
By \eqref{eq:rho}, this is precisely $\rho(x)_j\geq\rho(y)_j.$
Therefore $\rho(y)\leq\rho(x).$
Since $\rho^2=\id$, the converse implication follows by applying the
same argument to $\rho(x)$ and $\rho(y)$. Hence
\[
x\leq y
\quad\Longleftrightarrow\quad
\rho(y)\leq\rho(x).
\]
Thus $\rho$ is an order anti-isomorphism.

An order anti-isomorphism $P\to Q$ is equivalently an order isomorphism
$P\to Q^{\op}$. Applying the incidence-category construction gives
\[
\kk P
\cong
\kk(Q^{\op})
\cong
(\kk Q)^{\op}.
\]
Taking $P=\Dyck^{\mathrm{above}}_{a,b},
\qquad
Q=\Dyck^{\mathrm{below}}_{a,b}$
proves $\kk\Dyck^{\mathrm{above}}_{a,b}
\cong (\kk\Dyck^{\mathrm{below}}_{a,b})^{\op}.$
\end{proof}

\begin{corollary}
\label{cor:above-B0}
For integers $a,b\geq2$,
\[
 \Db{\kk\Dyck^{\mathrm{above}}_{a,b}}
 \simeq\Db{(B^{\mathrm{st}}_{a,b})^{\op}}.
\]
When $\gcd(a,b)=1$, the right-hand side is $\Db{B_0^{\op}}$.
\end{corollary}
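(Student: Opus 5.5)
The plan is to combine Lemma \ref{lem:above-below} with Theorem \ref{thm:below-B0}, together with the standard fact that derived equivalences pass to opposite algebras.

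First, Lemma \ref{lem:above-below} gives an algebra isomorphism $\kk\Dyck^{\mathrm{above}}_{a,b}\cong(\kk\Dyck^{\mathrm{below}}_{a,b})^{\op}$. It therefore suffices to show that $\Db{(\kk\Dyck^{\mathrm{below}}_{a,b})^{\op}}\simeq\Db{(B^{\mathrm{st}}_{a,b})^{\op}}$.

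Second, Theorem \ref{thm:below-B0} gives $\Db{\kk\Dyck^{\mathrm{below}}_{a,b}}\simeq\Db{B^{\mathrm{st}}_{a,b}}$. This equivalence comes from a tilting complex, namely the composite of the staircase tilting equivalences of Theorem \ref{thm:staircase} and the corner identification of Proposition \ref{prop:B0-C}. Both algebras are finite-dimensional. Both also have finite global dimension: the incidence algebra by Lemma \ref{lem:homologically-directed-global-dimension}, and $B^{\mathrm{st}}_{a,b}$ because it is derived equivalent to an algebra of finite global dimension.

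To transfer the equivalence to opposite algebras, I would use Rickard's theorem that a derived equivalence between finite-dimensional algebras $\Lambda,\Lambda'$ is induced by a two-sided tilting complex. Consequently $\Lambda^{\op}$ and $\Lambda'^{\op}$ are derived equivalent, with the same complex viewed from the other side. Alternatively, one can use the duality $D=\Hom_\kk(-,\kk)$, which gives $\Db{\Lambda\modcat}^{\op}\simeq\Db{\Lambda^{\op}\modcat}$. Conjugating the given equivalence by $D$ on both sides yields a triangle equivalence $\Db{\Lambda^{\op}}\simeq\Db{\Lambda'^{\op}}$; since triangle equivalences commute with taking opposite categories, no two-sided tilting complex is needed in this version. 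I would use the duality argument because it is the most self-contained. Applying it with $\Lambda=\kk\Dyck^{\mathrm{below}}_{a,b}$ and $\Lambda'=B^{\mathrm{st}}_{a,b}$, and then precomposing with the isomorphism from the first step, gives the claimed equivalence.

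Finally, when $\gcd(a,b)=1$, Proposition \ref{prop:B0-C} (equivalently \eqref{eq:Xing-corner}) identifies $B^{\mathrm{st}}_{a,b}\cong B_0$ as algebras. Hence $(B^{\mathrm{st}}_{a,b})^{\op}\cong B_0^{\op}$, which gives the last sentence of the statement.

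The only point needing care is the passage to opposites. One must make sure the duality $D$ really exchanges left and right module categories in our covariant-functor convention: a covariant module over $\mathcal A$ dualizes to a covariant module over $\mathcal A^{\op}$. One must also check that it sends bounded derived categories to bounded derived categories, which holds because all modules are finite-dimensional. Everything else is a direct composition of earlier results.
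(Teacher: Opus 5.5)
Your proposal is correct and follows the paper's proof, which likewise combines Lemma~\ref{lem:above-below} with the equivalence \eqref{eq:below-B0} passed to opposite algebras, and reads off the coprime case from the identification $B^{\mathrm{st}}_{a,b}\cong B_0$. The paper only asserts that derived equivalences pass to opposite algebras; your duality argument via $D=\Hom_{\kk}(-,\kk)$ is a valid justification of that step, and your finite-global-dimension remarks are not needed for it.
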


\begin{proof}
 By Lemma \ref{lem:above-below} and take opposites in \eqref{eq:below-B0}. A derived equivalence between finite-dimensional
algebras induces a derived equivalence between their opposite algebras.
\end{proof}

\section{Proof of the Chapoton-Ladkani-Rognerud conjecture}
\label{sec:conjecture}

\subsection{Tensoring a tilting equivalence}

The following tensor-product lemma is the special case of
\cite[Theorem~2.1]{Rickard1991} obtained by taking the regular
tilting complex over $C$.  We include a direct proof for the
reader's convenience.

\begin{lemma}
\label{lem:tensor-tilting}
Let $A$, $B$, and $C$ be finite-dimensional $\kk$-algebras.  Suppose that
$T\in \Kb(\pmodcat{A})$ is a tilting complex satisfying
\[
 \End_{\Db{A}}(T)^{\op}\cong B.
\]
Then $C\otimes_{\kk}T\in \Kb{\pmodcat{C\otimes_{\kk}A}}$
is a tilting complex and
$\End_{\Db{C\otimes_{\kk}A}}(C\otimes_{\kk}T)^{\op}
 \cong C\otimes_{\kk}B$.
Consequently, a derived equivalence between $A$ and $B$ induced by $T$
gives a derived equivalence between
$C\otimes_{\kk}A$ and $C\otimes_{\kk}B$.
\end{lemma}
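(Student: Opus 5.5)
The plan is to verify directly the three defining properties of a tilting complex for $C\otimes_{\kk}T$: perfectness, vanishing of self-extensions in nonzero degrees, and generation. We then compute its endomorphism algebra by a Künneth-type isomorphism. Throughout, we use that the functor $C\otimes_{\kk}-$ is exact, because $\kk$ is a field.

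For perfectness: each term of $T$ lies in $\add(A)$, so each term of $C\otimes_{\kk}T$ lies in $\add(C\otimes_{\kk}A)$. Hence $C\otimes_{\kk}T$ is a bounded complex of finitely generated projective $C\otimes_{\kk}A$-modules.

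The key computation is the natural isomorphism of complexes
\[
 \Hom^\bullet_{C\otimes A}(C\otimes T,\,C\otimes T)\cong C\otimes_{\kk}\Hom^\bullet_{A}(T,T).
\]
Since the terms are finitely generated projective, it suffices to check this on $T^p=T^q=A$. There it reads
\[
 \Hom_{C\otimes A}(C\otimes A,C\otimes A)\cong (C\otimes A)^{\op}\cong C^{\op}\otimes A^{\op},
\]
where $C^{\op}$ acts through right multiplication on the $C$-factor. The cleaner formulation is to let $C\otimes_{\kk}T$ be viewed as a $C$--$(C\otimes A)$... but one must track whether $C$ or $C^{\op}$ appears. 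Using $\End_{C}(C_C)\cong C$ for right-module conventions (or left, accordingly), the isomorphism becomes $C\otimes\Hom^\bullet_A(T,T)$ up to the choice of side. I expect the exact bookkeeping of $\op$'s to be the main nuisance.

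The isomorphism passes to additive closures and is compatible with the differentials. Since $\kk$ is a field, taking cohomology gives
\[
 \Hom_{\mathscr K}(C\otimes T,C\otimes T[r])\cong C\otimes H^r\Hom^\bullet_A(T,T).
\]
The right-hand side vanishes for $r\neq0$. For $r=0$ it is $C\otimes\End(T)$, and this identification is an algebra map because composition is computed factorwise. Taking opposites and using $C\otimes B$ (with the appropriate side convention for $C$) yields $\End(C\otimes T)^{\op}\cong C\otimes_{\kk}B$.

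For generation: $A\in\thick(T)$, and $C\otimes_{\kk}-$ is a triangulated functor $\Kb{\pmodcat A}\to\Kb{\pmodcat{C\otimes A}}$. Applying it gives $C\otimes A\in\thick(C\otimes T)$, and $C\otimes A$ is the regular module, which generates the perfect complexes. The final sentence of the lemma then follows from Rickard's theorem \cite{Rickard1989}, exactly as in the proof of Theorem \ref{thm:relative}.
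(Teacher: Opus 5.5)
Your proposal follows the paper's proof essentially step by step: the tensor isomorphism of Hom complexes, passing to cohomology to get both rigidity and the endomorphism algebra, deducing $C\otimes_{\kk}A\in\thick(C\otimes_{\kk}T)$ from $A\in\thick(T)$, and finishing with Rickard's theorem. The one point to settle is the $\op$ you left open. With the paper's left-module convention, your own check at $T=A$ already shows the factor is $C^{\op}$. So the key isomorphism of dg algebras should read $\Hom^\bullet_{C\otimes A}(C\otimes T,C\otimes T)\cong C^{\op}\otimes_{\kk}\Hom^\bullet_A(T,T)$. This gives $\End(C\otimes T)\cong C^{\op}\otimes B^{\op}$, and hence $C\otimes_{\kk}B$ after taking opposites, exactly as in the paper. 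Writing the factor as $C$ would instead yield $C^{\op}\otimes B$.
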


\begin{proof}
Because $\kk$ is a field and the terms of $T$ are finitely generated
projective $A$-modules, there is a natural isomorphism of dg algebras
\begin{equation}
 \Hom^\bullet_{C\otimes_{\kk}A}(C\otimes_{\kk}T,C\otimes_{\kk}T)
 \cong C^{\op}\otimes_{\kk}\Hom^\bullet_A(T,T).
\label{eq:tensor-hom-complex}
\end{equation}
Here the factor $C^{\op}$ occurs because
$\End_C({}_CC)\cong C^{\op}$ under our left-module convention.

Since tensoring over $\kk$ is exact, taking cohomology in
\eqref{eq:tensor-hom-complex} gives, for every $r\in\mathbb Z$,
\[
 \Hom_{\Db{C\otimes_{\kk}A}}(C\otimes_{\kk}T,(C\otimes_{\kk}T)[r])
 \cong C^{\op}\otimes_{\kk}\Hom_{\Db{A}}(T,T[r]).
\]
The right-hand side vanishes for $r\neq0$ because $T$ is tilting.
Thus $C\otimes_{\kk}T$ is rigid.  In degree zero, we obtain
\[
 \End_{\Db{C\otimes_{\kk}A}}(C\otimes_{\kk}T)
 \cong
 C^{\op}\otimes_{\kk}\End_{\Db(A)}(T)
 \cong
 C^{\op}\otimes_{\kk}B^{\op}.
\]
Taking opposite algebras yields $\End_{\Db{C\otimes_{\kk}A}}(C\otimes_{\kk}T)^{\op}
 \cong
 C\otimes_{\kk}B$.
It remains to verify generation.  Since $T$ is tilting,
\[
 A\in\thick(T)
 \qquad\text{inside }
 \Kb{\pmodcat{A}}.
\]
The functor $C\otimes_{\kk}-$ preserves shifts, cones, finite direct
sums, and direct summands.  Applying it to a finite construction of $A$
from $T$ gives
\[
 C\otimes_{\kk}A
 \in
 \thick(C\otimes_{\kk}T).
\]
The regular module $C\otimes_{\kk}A$ generates
$ \Kb{\pmodcat{C\otimes_{\kk}A}}$.
Hence $C\otimes_{\kk}T$ generates this category and is therefore a
tilting complex.  The final assertion follows from Rickard's derived
Morita theorem.
\end{proof}

\subsection{Replicated algebras}

Let $\Lambda$ be a finite-dimensional algebra.  Its $r$-replicated
algebra in Xing's convention is
\begin{equation}
 \Lambda^{(r)}
 =
 \begin{pmatrix}
  \Lambda&D\Lambda&0&\cdots&0\\
  0&\Lambda&D\Lambda&\ddots&\vdots\\
  \vdots&\ddots&\ddots&\ddots&0\\
  0&\cdots&0&\Lambda&D\Lambda\\
  0&\cdots&\cdots&0&\Lambda
 \end{pmatrix}.
\label{eq:replicated-matrix}
\end{equation}
Multiplication uses the natural $\Lambda$-bimodule structure of
$D\Lambda$; the product of two adjacent off-diagonal entries is zero,
since the second superdiagonal is zero.  The superscript in
\eqref{eq:replicated-matrix} counts matrix blocks, following Xing.
Ladkani calls the same $r$-block algebra the $(r-1)$-replicated algebra.

\begin{lemma}
\label{lem:replicated-op}
For every $r\geq1$, there is a natural algebra isomorphism
\begin{equation}
 (\Lambda^{\op})^{(r)}
 \cong(\Lambda^{(r)})^{\op}.
\label{eq:replicated-op}
\end{equation}
\end{lemma}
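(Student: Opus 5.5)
The plan is to write down an explicit isomorphism given by reversing the block order, and to check that it is an anti-homomorphism. Elements of $\Lambda^{(r)}$ are upper bidiagonal block matrices $(a_{ij})$. The diagonal entries are $a_{ii}\in\Lambda$, the superdiagonal entries are $a_{i,i+1}\in D\Lambda$, and all other entries vanish. The same description holds for $(\Lambda^{\op})^{(r)}$, with diagonal entries in $\Lambda^{\op}$ and superdiagonal entries in $D(\Lambda^{\op})$. As a vector space, $D(\Lambda^{\op})=\Hom_{\kk}(\Lambda,\kk)=D\Lambda$. Its $\Lambda^{\op}$-bimodule structure is the $\Lambda$-bimodule structure of $D\Lambda$ with left and right interchanged: for $\phi\in D\Lambda$ and $\lambda\in\Lambda$, left multiplication by $\lambda$ in $D(\Lambda^{\op})$ is right multiplication by $\lambda$ in $D\Lambda$, and conversely. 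I will check this first, directly from the formulas $(\lambda\phi)(\mu)=\phi(\mu\lambda)$ and $(\phi\lambda)(\mu)=\phi(\lambda\mu)$.

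I then define
\[
\theta\colon(\Lambda^{\op})^{(r)}\longrightarrow(\Lambda^{(r)})^{\op},\qquad
\theta(a)_{ij}=a_{r+1-j,\,r+1-i},
\]
which is transpose followed by reversal of block order. This map sends diagonal entries to diagonal entries. It sends position $(i,i+1)$ to position $(r-i,\,r+1-i)$, which again lies on the superdiagonal. Hence $\theta$ is a linear bijection between the two underlying spaces, and it is clearly involutive in form.

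The remaining step is multiplicativity, namely $\theta(a\cdot_{\op}b)=\theta(b)\,\theta(a)$ computed in $\Lambda^{(r)}$. I expand both sides blockwise. A product of two bidiagonal matrices has diagonal terms $a_{ii}b_{ii}$ and superdiagonal terms $a_{ii}b_{i,i+1}+a_{i,i+1}b_{i+1,i+1}$. The second-superdiagonal terms $a_{i,i+1}b_{i+1,i+2}$ are zero by convention on both sides. On the diagonal, the identity reduces to the definition of multiplication in $\Lambda^{\op}$. On the superdiagonal, $\theta$ swaps the roles of the two diagonal neighbours. Under this swap, the left $\Lambda^{\op}$-action on $D(\Lambda^{\op})$ becomes the right $\Lambda$-action on $D\Lambda$, and conversely, which matches the reversal of factors. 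Units are preserved because the identity matrix is fixed by $\theta$.

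I expect the main obstacle to be bookkeeping rather than any conceptual difficulty. The delicate point is to check carefully that the index reversal combined with the opposite bimodule structure gives exactly the same formula on the superdiagonal, in particular that the term $a_{ii}b_{i,i+1}$ lands on the correct side. I will verify this by an explicit $r=2$ computation and then note that the general case is the same computation in each adjacent pair of blocks. Naturality in $\Lambda$ holds because $\theta$ is defined only by permuting entries and applying the identity on $D\Lambda$.
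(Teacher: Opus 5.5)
Your proposal is correct and follows the paper's own argument: the paper also transposes the block positions, conjugates by the anti-diagonal permutation to restore the upper bidiagonal form, and uses the canonical identification $(D\Lambda)^{\op}\cong D(\Lambda^{\op})$, under which left and right actions are interchanged. Your formula $\theta(a)_{ij}=a_{r+1-j,\,r+1-i}$ is that composite written as a single map, and your blockwise check on the diagonal and superdiagonal is the verification the paper compresses into its final sentence.
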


\begin{proof}
Taking the opposite of \eqref{eq:replicated-matrix} transposes the block
positions, replaces each diagonal copy of $\Lambda$ by $\Lambda^{\op}$,
and replaces the adjacent bimodule by $(D\Lambda)^{\op}$.  There is a
canonical $\Lambda^{\op}$-bimodule isomorphism
\begin{equation}
 (D\Lambda)^{\op}\longrightarrow D(\Lambda^{\op}),
 \qquad \varphi\longmapsto\bigl(\lambda^{\op}\mapsto\varphi(\lambda)\bigr).
\label{eq:dual-op}
\end{equation}
Conjugating the resulting lower triangular block matrix by the
anti-diagonal permutation matrix reverses the block order and makes it
upper triangular.  Under \eqref{eq:dual-op}, the result is precisely
$(\Lambda^{\op})^{(r)}$.  All products agree: diagonal--off-diagonal
products are the bimodule actions, and the product of two off-diagonal
blocks is zero on both sides.
\end{proof}

By \cite[Corollary~1.3]{Ladkani}, Ladkani proves that, when $\Lambda$ is Gorenstein,
\begin{equation}
 \Lambda\otimes_{\kk}\kk\ovr A_r
 \simeq_{\mathrm{der}}\Lambda^{(r)}.
\label{eq:Ladkani-replicated}
\end{equation}
Xing proves that $B_0$ has finite global
dimension \cite[Proposition~4.34]{Xing}, so
\eqref{eq:Ladkani-replicated} applies to $B_0$ and $B_0^{\op}$.

\begin{theorem}
\label{thm:serre-orbit-replication}
Let $A$ be a finite-dimensional $\kk$-algebra of finite global
dimension, and let $S\colon\Db{A}\longrightarrow\Db{A}$
be its Serre functor.  Fix $P\in \Kb{\pmodcat{A}}$ and
an integer $r\geq1$, and put
\[
 T_r(P)=\bigoplus_{i=0}^{r-1}S^iP,
 \qquad
 \Lambda_P=\End_{\Db{A}}(P)^{\op}.
\]
Assume the following three conditions.
\begin{enumerate}
\item The object $T_r(P)$ is a tilting complex for $A$.
\item For every $1\leq q\leq r-1$, $\Hom_{\Db{A}}(P,S^{-q}P)=0$.
\item For every $2\leq q\leq r-1$, $\Hom_{\Db{A}}(P,S^qP)=0$.
\end{enumerate}
Then there is an algebra isomorphism
$\End_{\Db{A}}(T_r(P))^{\op}
 \cong \Lambda_P^{(r)}$.
Consequently, $\Db{A}\simeq\Db{\Lambda_P^{(r)}}$.
If, in addition, $\Lambda_P$ is Gorenstein, then
$\Db{A}
 \simeq\Db{\Lambda_P^{(r)}}
 \simeq
 \Db{\kk\ovr A_r\otimes_{\kk}\Lambda_P}.$
\end{theorem}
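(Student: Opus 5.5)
Since $T_r(P)$ is tilting by hypothesis (1), the equivalence $\Db{A}\simeq\Db{\End(T_r(P))^{\op}}$ follows from Rickard's theorem once the endomorphism algebra is identified. The substance is therefore the algebra isomorphism $\End_{\Db{A}}(T_r(P))^{\op}\cong\Lambda_P^{(r)}$. The final assertion then follows by composing with Ladkani's equivalence \eqref{eq:Ladkani-replicated} applied to the Gorenstein algebra $\Lambda_P$.

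I would compute the block matrix $\bigl(\Hom(S^iP,S^jP)\bigr)_{0\le i,j\le r-1}$. Since $S$ is an autoequivalence, $\Hom(S^iP,S^jP)\cong\Hom(P,S^{j-i}P)$. The cases are as follows.
\begin{enumerate}
\item On the diagonal, $\Hom(P,P)=\End(P)\cong\Lambda_P^{\op}$, transported by $S^i$.
\item For $j<i$, write $q=i-j\in[1,r-1]$. Hypothesis (2) gives $\Hom(P,S^{-q}P)=0$.
\item For $j-i\ge2$, hypothesis (3) gives $\Hom(P,S^{j-i}P)=0$.
\item For $j=i+1$, Serre duality gives $\Hom(P,SP)\cong D\Hom(P,P)=D\End(P)$.
\end{enumerate}
So the matrix is block upper bidiagonal with diagonal $\End(P)$ and superdiagonal $D\End(P)$.

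Next I would check that the multiplication matches. Composition $\Hom(S^{i+1}P,S^{i+2}P)\times\Hom(S^iP,S^{i+1}P)\to\Hom(S^iP,S^{i+2}P)$ lands in a zero space, by (3) when $r\ge3$. This gives the vanishing product of adjacent off-diagonal blocks. The bimodule actions of $\End(P)$ on $\Hom(P,SP)\cong D\End(P)$ by pre- and postcomposition must agree with the natural $\End(P)$-bimodule structure on $D\End(P)$. This is the naturality of the Serre duality pairing $\Hom(P,SP)\times\Hom(P,P)\to\kk$, $\langle gf,h\rangle=\langle g,fh\rangle$. On the diagonal one must also check that conjugation by $S$ induces the identification $\End(S^iP)\cong\End(P)$ compatibly with these actions, using that $S$ is a functor and that the Serre isomorphism is natural in both variables.

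Finally, I would take opposites and match the shape with \eqref{eq:replicated-matrix}. Passing to the opposite turns $\End(P)$ into $\Lambda_P$. It also turns $D\End(P)=D(\Lambda_P^{\op})$ into $D\Lambda_P$ via \eqref{eq:dual-op}, and it transposes the blocks. Reversing the block order, as in the proof of Lemma~\ref{lem:replicated-op}, then returns the upper bidiagonal shape. The index orientation depends on whether composition is read as rows equal to targets, so I would fix that convention explicitly and possibly reverse the order of the summands $S^iP$.

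The main obstacle is the bimodule-compatibility step: verifying that the Serre-duality identification $\Hom(P,SP)\cong D\End(P)$ is an isomorphism of $\End(P)$-bimodules with the correct sides after the opposites. I would check it directly from the defining natural isomorphism $\Hom(X,Y)\cong D\Hom(Y,SX)$, with $X=Y=P$, by tracking naturality in each variable.
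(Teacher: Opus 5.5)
Your proposal is correct and follows essentially the same route as the paper. Both arguments identify the blocks $\Hom(S^iP,S^jP)\cong\Hom(P,S^{j-i}P)$, kill the off-band blocks with (2) and (3), use Serre duality (natural in both variables, with the left action twisted through $S$) to put $D\End(P)$ on the first off-diagonal, observe that products of adjacent off-diagonal blocks land in the vanishing second band, pass to the opposite algebra, and conclude with Rickard and Ladkani.

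Your only deviation is the row/column convention. The paper writes $e_i\End(T_r(P))e_j=\Hom(S^jP,S^iP)$, obtains a lower bidiagonal algebra, and its opposite is then directly upper bidiagonal. Any reversal of the summand order you might need is just a relabelling of idempotents and so is harmless.
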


\begin{proof}
Let $e_i$ be the idempotent of
$\End_{\Db(A)}(T_r(P))$ corresponding to the direct summand $S^iP$.
For $0\leq i,j\leq r-1$, applying the autoequivalence $S^{-j}$ gives
\[
\begin{split}
 e_i\End_{\Db{A}}(T_r(P))e_j
 &=
 \Hom_{\Db{A}}(S^jP,S^iP)\\
 &\cong
 \Hom_{\Db{A}}(P,S^{i-j}P).
\end{split}
\]
We determine these blocks according to the value of $i-j$.

If $i=j$, the block is $\End_{\Db{A}}(P)=\Lambda_P^{\op}$.
If $i=j+1$, Serre duality, applied with both arguments equal to $P$,
gives a natural isomorphism
\[
 \Hom_{\Db{A}}(P,SP)
 \cong D\End_{\Db{A}}(P).
\]
Naturality in both arguments shows that this is an isomorphism of
$\End(P)$-bimodules, not merely an isomorphism of vector spaces.

If $i<j$, then $i-j=-q$ for some $1\leq q\leq r-1$, and the
corresponding block vanishes by condition~(2).  If $i\geq j+2$, then
$i-j=q$ for some $2\leq q\leq r-1$, and the block vanishes by
condition~(3).  Hence $\End_{\Db{A}}(T_r(P))$ is a lower triangular block
algebra with $\End(P)$ on the diagonal,
$D\End(P)$ on the first subdiagonal, and zero in every other
off-diagonal position.

Taking the opposite algebra reverses the block direction.  Moreover, the
canonical bimodule identification
\[
(D\End(P))^{\op}
 \cong D(\End(P)^{\op})
 =D\Lambda_P
\]
identifies the first superdiagonal with $D\Lambda_P$.  A product of two
adjacent superdiagonal blocks factors through the second superdiagonal.
That block is zero by condition~(3), so the product is zero.  All remaining
products are the natural left and right $\Lambda_P$-actions on
$D\Lambda_P$.  This is precisely the multiplication in
$\Lambda_P^{(r)}$ from \eqref{eq:replicated-matrix}.  Therefore
\[
 \End_{\Db{A}}(T_r(P))^{\op}\cong\Lambda_P^{(r)}.
\]

Condition~(1) and Rickard's derived Morita theorem now give
$\Db{A}\simeq\Db{\Lambda_P^{(r)}}$.  If $\Lambda_P$ is Gorenstein,
Ladkani's equivalence \eqref{eq:Ladkani-replicated}, together with the
canonical factor-swap isomorphism
$\Lambda_P\otimes\kk\ovr A_r\cong
\kk\ovr A_r\otimes\Lambda_P$, proves the final equivalence.
\end{proof}

\begin{remark}
\label{rem:Xing-vanishing-mechanism}
\Cref{thm:serre-orbit-replication} separates the categorical part of Xing's
construction from its lattice-path input.  In the coprime rational case,
Xing chooses the Dyck-indexed projective object $P$, proves that its
Serre-orbit window is tilting, and establishes the necessary
semiorthogonality and higher-band vanishings in his path model; see
\cite[Theorem~4.5 and Section~4.4]{Xing}, in particular
\cite[Proposition~4.17 and Corollary~4.21]{Xing}.  The replicated matrix
algebra then follows formally from the theorem.
\end{remark}

\subsection{Self-oppositeness of the higher Auslander algebra}

\begin{lemma}
\label{lem:A-self-op}
The higher Auslander algebra $A_{b+1}^{a}$ in Xing's convention is
isomorphic to its opposite.
\end{lemma}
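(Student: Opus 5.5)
Xing's $A_{b+1}^{a}$ has vertex set $\os_a(N)$ with $N=a+b$, morphism spaces \eqref{eq:higher-hom}, and composition \eqref{eq:higher-composition}. The plan is to find an explicit anti-automorphism of the higher Auslander category on $\os_a(N)$. The natural candidate is the reversal map already used for Dyck paths in \eqref{eq:rho}:
\[
 \rho(z_1,\ldots,z_a)=(N+1-z_a,\ldots,N+1-z_1).
\]
It is an involutive bijection of $\os_a(N)$. I will show that $\rho$ reverses the interlacing relation, namely $x\preceq y$ if and only if $\rho(y)\preceq\rho(x)$. Then the assignment $f_{yx}\mapsto f_{\rho(x)\rho(y)}$ defines an isomorphism from the category to its opposite, and passing to category algebras gives $A_{b+1}^{a}\cong(A_{b+1}^{a})^{\op}$.

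\emph{Step 1: interlacing is reversed.} Write $x'=\rho(x)$ and $y'=\rho(y)$, so that $x'_i=N+1-x_{a+1-i}$ and similarly for $y'$. The condition $\rho(y)\preceq\rho(x)$ reads
\[
 y'_1\le x'_1<y'_2\le x'_2<\cdots<y'_a\le x'_a .
\]
Substituting and subtracting from $N+1$ reverses every inequality. The result is exactly the chain
\[
 x_1\le y_1<x_2\le\cdots<x_a\le y_a
\]
read from right to left. The weak inequalities $x_k\le y_k$ and the strict inequalities $y_k<x_{k+1}$ are each preserved as a type, so the equivalence is an index check.

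\emph{Step 2: composition is respected.} Define $\Phi$ on basis morphisms by $\Phi(f_{yx})=f_{\rho(x)\rho(y)}$, viewed as a morphism in the opposite category, and extend linearly. By Step 1, $\Phi$ is a bijection on the support of the Hom spaces, so it is a linear isomorphism on each Hom space. It sends identities to identities. For basis morphisms $x\to y\to z$, the composite $f_{zy}f_{yx}$ is nonzero exactly when $x\preceq z$, by \eqref{eq:higher-composition}. Its image in the opposite category is $f_{\rho(x)\rho(y)}f_{\rho(y)\rho(z)}$, which is nonzero exactly when $\rho(z)\preceq\rho(x)$. These two conditions are equivalent by Step 1, and when both hold the two composites are the corresponding basis elements. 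Hence $\Phi$ is a contravariant isomorphism of linear categories, and by \eqref{eq:category-algebra} it induces an algebra isomorphism $\operatorname{Alg}\to\operatorname{Alg}^{\op}$.

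The main obstacle is only bookkeeping. One must confirm that Xing's $A_{b+1}^{a}$ really is the category on $\os_a(a+b)$ with this presentation, under the index conversion of Remark \ref{rem:index-conversion}. One must also check that the weak and strict inequalities land in the correct slots after reversal, because an off-by-one error would break Step 1. Associativity is automatic here, since it is transported along the bijection.
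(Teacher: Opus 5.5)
Your proposal is correct and uses the same reversal involution $z\mapsto(a+b+1-z_a,\ldots,a+b+1-z_1)$ as the paper. The only difference is where you check it: the paper works on the quiver with commutativity and half-square zero relations, whereas you work directly with the basis \eqref{eq:higher-hom} and the outer-pair rule \eqref{eq:higher-composition}. That is the argument of Lemma \ref{lem:reflected-corner} with $\Omega=\os_a(a+b)$, and it has the advantage of not relying on the quiver presentation.

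For the bookkeeping you flag, what you need is the convention that Xing's $A_n^m$ has vertex set $\os_m(n+m-1)$, which gives $N=a+b$. Remark \ref{rem:index-conversion} only concerns Gottesman's indexing and plays no role here.
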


\begin{proof}
Its vertices are the increasing $a$-tuples in
$\{1,\ldots,a+b\}$.  Define
\begin{equation}
 \sigma(z_1,\ldots,z_a)
 =
 (a+b+1-z_a,\ldots,a+b+1-z_1).
\label{eq:sigma}
\end{equation}
This sends an arrow incrementing a coordinate to an arrow in the reverse
direction after applying $\sigma$.  It exchanges the two paths around every
commutative square and preserves the condition that a half-square path is
zero.  Therefore it induces an isomorphism from the opposite quiver with
relations to the original one.
\end{proof}

\subsection{The equivalence chain}

\begin{theorem}
\label{thm:CLR-conjecture}
Let $a,b$ be coprime positive integers.  Then
\[
 \Db{\kk(\ovr A_{a+b}\times\Dyck^{\mathrm{above}}_{a,b}}
 \simeq
 \Db{\kk L_{a,b}}.
\]
\end{theorem}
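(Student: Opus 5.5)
We assemble the chain of equivalences described in the introduction, beginning with the new Dyck-corner step and only then invoking the known results. First we dispose of the degenerate case. If $a=1$ or $b=1$, then $\Dyck^{\mathrm{above}}_{a,b}$ is a singleton, so the left side is $\Db{\kk\ovr A_{a+b}}$. The lattice $L_{a,b}$ is then a chain with $a+b$ elements, because north-east paths in a $1\times b$ (or $a\times1$) rectangle are totally ordered by the position of the single horizontal (or vertical) step. Hence the two sides agree, and we may assume $a,b\geq2$ and $\gcd(a,b)=1$.

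Let $r=a+b$. By \cref{cor:above-B0},
\[
\Db{\kk\Dyck^{\mathrm{above}}_{a,b}}\simeq\Db{B_0^{\op}}.
\]
This equivalence comes from a tilting complex: it is the composite of the tilting complexes of \cref{thm:staircase}, transported to opposite algebras through \cref{lem:above-below}. Since $\kk(\ovr A_r\times P)\cong\kk\ovr A_r\otimes\kk P$, \cref{lem:tensor-tilting} with $C=\kk\ovr A_r$ gives
\[
\Db{\kk(\ovr A_r\times\Dyck^{\mathrm{above}}_{a,b})}\simeq\Db{\kk\ovr A_r\otimes B_0^{\op}}.
\]
Each derived equivalence is induced by a tilting complex, so we apply the lemma successively along the chain.

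Xing proves $\gldim B_0<\infty$, so $B_0^{\op}$ is Gorenstein. Ladkani's theorem \eqref{eq:Ladkani-replicated} then gives
\[
\Db{\kk\ovr A_r\otimes B_0^{\op}}\simeq\Db{(B_0^{\op})^{(r)}}\cong\Db{(B_0^{(r)})^{\op}},
\]
where the last isomorphism is \cref{lem:replicated-op}. Xing's equivalence \cite[Theorem~4.5]{Xing} gives $\Db{B_0^{(r)}}\simeq\Db{A_{b+1}^{a}}$; this uses coprimality, and \cref{rem:Xing-vanishing-mechanism} explains it via \cref{thm:serre-orbit-replication}. Passing to opposites and using \cref{lem:A-self-op}, we obtain
\[
\Db{(B_0^{(r)})^{\op}}\simeq\Db{(A_{b+1}^a)^{\op}}\cong\Db{A_{b+1}^a}.
\]
Finally, Gottesman's Theorem~E, read in Xing's indexing via \cref{rem:index-conversion}, identifies $\Db{A_{b+1}^a}$ with $\Db{\kk J_{a,b}}$. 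Here $J_{a,b}$, the lattice of order ideals of the $a\times b$ grid, is isomorphic to $L_{a,b}$. Equivalently, \cref{cor:rectangular-lattice} with $\os_a(a+b)$ gives this step directly.

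The main obstacle is bookkeeping rather than new mathematics. We must check that the replication index $r=a+b$ in Xing's block-count convention matches $\ovr A_{a+b}$ in Ladkani's convention, and that Xing's $A_{b+1}^a$ with vertex set $\os_a(a+b)$ is exactly the higher Auslander algebra appearing in \cref{cor:rectangular-lattice}. We also need that $\os_a(a+b)$ with the coordinatewise order is isomorphic to $L_{a,b}$ as a poset, not merely as a set. We would verify this via the horizontal-step encoding $c$ of Subsection~\ref{subsec:dyck-coordinates}: the encoding is bijective onto $\os_a(a+b)$, and the coordinatewise order on horizontal-step positions corresponds to the containment order on the regions below the paths. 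Once these conventions match, the chain closes.
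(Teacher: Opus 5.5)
Your proposal is correct and follows the paper's own chain essentially step for step: Corollary~\ref{cor:above-B0}, then Lemma~\ref{lem:tensor-tilting} (tensoring with the incidence algebra of the $(a+b)$-chain), then Ladkani's theorem with Lemma~\ref{lem:replicated-op}, then Xing's equivalence with Lemma~\ref{lem:A-self-op}, and finally Gottesman's Theorem~E. Two small additions are valid but do not change the route: your explicit check of the degenerate cases $a=1$ or $b=1$ (both sides reduce to the $(a+b)$-chain), and your remark that Corollary~\ref{cor:rectangular-lattice}, combined with the horizontal-step encoding $L_{a,b}\cong\os_a(a+b)$, can replace the external citation of Gottesman.
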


\begin{proof}
The cases $a=1$ or $b=1$ are immediate, so assume $a,b\geq2$, after
interchanging the rectangle directions if necessary.

Put $r=a+b$.  Apply Lemma \ref{lem:tensor-tilting} to the explicit tilting
equivalence in Corollary \ref{cor:above-B0}, with $C=\kk\ovr A_r$.  This gives
\begin{equation}
 \kk\ovr A_r\otimes
 \kk\Dyck^{\mathrm{above}}_{a,b}
 \simeq_{\mathrm{der}}
 \kk\ovr A_r\otimes B_0^{\op}.
\label{eq:chain1}
\end{equation}
By \eqref{eq:Ladkani-replicated},
\begin{equation}
 \kk\ovr A_r\otimes B_0^{\op}
 \simeq_{\mathrm{der}}
 (B_0^{\op})^{(r)}
 \cong(B_0^{(r)})^{\op}.
\label{eq:chain2}
\end{equation}
Here the displayed isomorphism is Lemma \ref{lem:replicated-op}; thus the
placement of $D B_0$ and the passage to opposites introduce no convention
change.
Xing's main equivalence gives, under the coprimality hypothesis,
\begin{equation}
 B_0^{(a+b)}\simeq_{\mathrm{der}}A_{b+1}^{a}
\label{eq:Xing-main}
\end{equation}
in her convention \cite[Theorem~4.5 and Proposition~4.25]{Xing}.  Taking
opposites and applying Lemma \ref{lem:A-self-op} yields
\begin{equation}
 (B_0^{(a+b)})^{\op}
 \simeq_{\mathrm{der}}A_{b+1}^{a}.
\label{eq:chain3}
\end{equation}

The lattice $L_{a,b}$ of all paths is the lattice
$J_{b,a}$ of order ideals of the $b\times a$ grid.  Gottesman's
Theorem~E gives
\[
 \kk J_{b,a}
 \simeq_{\mathrm{der}} A_{b+1}^{a-1}
 \quad\text{in Gottesman's convention}.
\]
By Remark \ref{rem:index-conversion}, the algebra on the right is Xing's
$A_{b+1}^{a}$.  Hence
\begin{equation}
 \kk L_{a,b}\simeq_{\mathrm{der}}A_{b+1}^{a}.
\label{eq:Gottesman}
\end{equation}
Combining \eqref{eq:chain1}--\eqref{eq:Gottesman} proves the theorem.
\end{proof}

\begin{remark}
The only use of coprimality in Theorem \ref{thm:CLR-conjecture} is Xing's
equivalence \eqref{eq:Xing-main}, whose construction uses free cyclic orbits
of lattice paths.  The staircase equivalence
\eqref{eq:below-B0} remains valid without coprimality.
\end{remark}

\section{Fukaya-categorical interpretation}
\label{sec:fukaya}

\subsection{Index conventions and symmetric products}

We use the notation of Dyckerhoff--Jasso--Lekili \cite{DJL} only in this
section.  Let $\mathsf A_{n,d}$
denote their higher Auslander algebra, whose vertices are the strictly
increasing $d$-tuples in $\{1,\ldots,n\}$.  Comparison with Section 
\ref{sec:preliminaries} gives the index conversion
\begin{equation}
 \mathsf A_{n,d}=A_{n-d+1}^{d}
 \quad\text{in Xing's notation}.
\label{eq:DJL-index}
\end{equation}

Let $D\subset\mathbb C$ be the closed disk and let
$\Lambda_n\subset\partial D$ be a set of $n+1$ marked points.  Put
\begin{equation}
 \Lambda_n^{(d)}
 =
 \bigcup_{p\in\Lambda_n}
 \{p\}\times\operatorname{Sym}^{d-1}(D)
 \subset\partial\operatorname{Sym}^{d}(D)
\label{eq:symmetric-stops}
\end{equation}
and abbreviate
\begin{equation}
\Wcat_n^{(d)}
 =
 \Wcat\!\left(\operatorname{Sym}^{d}(D),\Lambda_n^{(d)}\right).
\label{eq:W-nd}
\end{equation}
For a finite-dimensional algebra $A$, let
$\operatorname{Perf}_{\mathrm{dg}}(A)$ denote the pretriangulated and
idempotent-complete dg category of perfect right dg $A$-modules, and put
\begin{equation}
 \perf(A)=H^0\!\left(\operatorname{Perf}_{\mathrm{dg}}(A)\right).
\label{eq:enhanced-perf-convention}
\end{equation}
Thus $\operatorname{Perf}_{\mathrm{dg}}(A)$ is an enhancement, whereas
$\perf(A)$ is its triangulated homotopy category.  We regard dg categories
as strictly unital, uncurved $A_\infty$-categories when comparing them with
Fukaya categories, and we choose strictly unital, uncurved models throughout
this section.

We take $\Wcat_n^{(d)}$ to be the pretriangulated and idempotent-complete
partially wrapped Fukaya $A_\infty$-category of the stopped symmetric
product.  Its cohomological category $H^0(\Wcat_n^{(d)})$ is triangulated.
At the enhanced level, \cite[Theorems~1 and~2]{DJL} gives
quasi-equivalences of $A_\infty$-categories
\begin{equation}
 \operatorname{Perf}_{\mathrm{dg}}(\mathsf A_{n,d})
 \simeq\Wcat_n^{(d)}
 \simeq\operatorname{Perf}_{\mathrm{dg}}(\mathsf A_{n,n-d}).
\label{eq:DJL-equivalences}
\end{equation}
Taking $H^0$ gives the corresponding triangle equivalences between
$\perf(\mathsf A_{n,d})$, $H^0(\Wcat_n^{(d)})$, and
$\perf(\mathsf A_{n,n-d})$.
The first equivalence sends the indecomposable projectives to explicit
product Lagrangians $L_I$, indexed by
$I\in\os_d(n)$.  The second equivalence is the symplectic form of the
Koszul-duality symmetry $d\leftrightarrow n-d$.

\subsection{Staircases as Lagrangian-generated subcategories}

The local staircase equivalence has a direct symplectic interpretation.  We
state it carefully with the left-module convention fixed in
Section \ref{sec:preliminaries}.

\begin{lemma}
\label{lem:corner-perfect}
Let $A$ be a finite-dimensional $\kk$-algebra and $e\in A$ an
idempotent.  Regard $eA$ as an $(eAe,A)$-bimodule.  Then
\[
 -\otimes_{eAe}^{\mathbf L}eA\colon
 \perf(eAe)\longrightarrow\perf(A)
\]
is fully faithful, and its essential image is
$\thick_{\perf(A)}(eA)$.
\end{lemma}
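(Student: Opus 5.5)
The plan is the standard idempotent-corner argument: identify morphisms between images, deduce full faithfulness by dévissage, then read off the essential image.

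\emph{Step 1: the generator.} The functor $\Phi=-\otimes^{\mathbf L}_{eAe}eA$ is a triangulated functor $\perf(eAe)\to\perf(A)$. It sends the regular module $eAe$ to $eAe\otimes_{eAe}eA\cong eA$. Since $eA$ is a direct summand of $A=eA\oplus(1-e)A$, it is finitely generated projective, so $\Phi$ indeed lands in $\perf(A)$.

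\emph{Step 2: morphisms between the generators.} For every $r$,
\[
\Hom_{\perf(A)}(eA,eA[r])=\Ext^r_A(eA,eA).
\]
This is zero for $r\neq0$ because $eA$ is projective. For $r=0$, the standard identification $\Hom_A(eA,M)\cong Me$ gives $\End_A(eA)\cong eAe$. One must check that the map
\[
\Phi\colon \End_{eAe}(eAe)\longrightarrow\End_A(eA)
\]
is this isomorphism. Both sides are identified with $eAe$ acting by left multiplication, and the module conventions have to be matched so that no opposite algebra appears. This side- and opposite-bookkeeping is the only place where I expect care is needed. I would handle it by writing the maps explicitly as $x\mapsto ax$ for $a\in eAe$.

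\emph{Step 3: full faithfulness by dévissage.} Fix $X,Z\in\perf(eAe)$ and consider the class of pairs for which
\[
\Phi\colon\Hom(X,Z[r])\longrightarrow\Hom(\Phi X,\Phi Z[r])
\]
is bijective for all $r$. Step 2 shows this holds when $X=Z=eAe$. The class is closed under shifts, cones (by the five lemma applied to the long exact $\Hom$ sequences), finite sums and direct summands, first in $X$ and then in $Z$. Since $\perf(eAe)=\thick(eAe)$, the map is bijective for all $X,Z$, so $\Phi$ is fully faithful.

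\emph{Step 4: essential image.} The essential image is contained in $\thick_{\perf(A)}(eA)$, because $\Phi$ is triangulated and $\Phi(eAe)\cong eA$. For the reverse inclusion, the essential image of a fully faithful triangulated functor is a triangulated subcategory containing $eA$. It is closed under direct summands because $\perf(eAe)$ is idempotent complete and $\Phi$ is fully faithful: an idempotent on $\Phi X$ lifts to an idempotent on $X$, which splits in $\perf(eAe)$. Hence the image is thick, and therefore contains $\thick(eA)$, which gives equality.
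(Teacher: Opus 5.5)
Your proposal is correct and follows the same route as the paper: identify $\RHom_A(eA,eA)\cong eAe$ in degree zero, extend full faithfulness from the generator by closure under shifts, cones and direct summands, and read off the essential image from $\perf(eAe)=\thick(eAe)$. Your Step 4 uses idempotent completeness of $\perf(eAe)$ to show that the image is closed under direct summands, which makes explicit a point that the paper's one-line conclusion leaves implicit.
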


\begin{proof}
The module $eA$ is a direct summand of the regular right $A$-module, so
it is perfect over $A$.  On the compact generator $eAe$ of
$\perf(eAe)$, the functor takes the value $eA$, and
\[
 \RHom_A(eA,eA)=\Hom_A(eA,eA)\cong eAe.
\]
The higher derived Hom groups vanish because $eA$ is projective as a right
$A$-module.  Hence the functor induces isomorphisms on all graded
morphism spaces between finite direct sums of shifts of the generator.
Closure under cones and direct summands proves full faithfulness on
$\perf(eAe)$.  Finally, because $\perf(eAe)=\thick(eAe)$, the essential
image is exactly $\thick_{\perf(A)}(eA)$.
\end{proof}

\begin{lemma}
\label{lem:reflected-corner}
Let $\sigma_N(z_1,\ldots,z_m)
 =(N+1-z_m,\ldots,N+1-z_1)$.
For every $\Omega\subseteq\os_m(N)$, coordinate reversal induces an isomorphism
\[
 \bigl(e_\Omega\mathsf A_{N,m}e_\Omega\bigr)^{\op}
 \cong
 e_{\sigma_N(\Omega)}\mathsf A_{N,m}
 e_{\sigma_N(\Omega)}.
\]
\end{lemma}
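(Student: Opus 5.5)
The proof will go through the category level. By definition, $\mathsf A_{N,m}$ is the category algebra of the higher Auslander category on $\os_m(N)$, with Hom spaces \eqref{eq:higher-hom} and composition \eqref{eq:higher-composition}. The corner $e_\Omega\mathsf A_{N,m}e_\Omega$ is then the category algebra of its full subcategory on $\Omega$: the corner keeps exactly the Hom spaces between vertices of $\Omega$, with the inherited multiplication. The opposite of a category algebra is the category algebra of the opposite category. So it suffices to construct an isomorphism of linear categories from the opposite of the full subcategory on $\Omega$ to the full subcategory on $\sigma_N(\Omega)$. The natural candidate is $x\mapsto\sigma_N(x)$ on objects and $f_{yx}^{\op}\mapsto f_{\sigma_N(x)\sigma_N(y)}$ on basis morphisms.

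The first step is to check that $\sigma_N$ is an involutive bijection of $\os_m(N)$ that reverses interlacing. Write $\sigma_N(x)_i=N+1-x_{m+1-i}$. The chain
\[
x_1\leq y_1<x_2\leq\cdots<x_m\leq y_m
\]
becomes, after applying $t\mapsto N+1-t$ and reading backwards,
\[
\sigma_N(y)_1\leq\sigma_N(x)_1<\sigma_N(y)_2\leq\cdots<\sigma_N(y)_m\leq\sigma_N(x)_m .
\]
This is precisely $\sigma_N(y)\preceq\sigma_N(x)$. Hence $x\preceq y$ holds if and only if $\sigma_N(y)\preceq\sigma_N(x)$, and the Hom spaces match: $\Hom(x,y)\neq0$ exactly when $\Hom(\sigma_N(y),\sigma_N(x))\neq0$. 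Both spaces are one-dimensional with the given bases, and identities go to identities.

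The second step is compatibility with composition, including the forced zeros. In the opposite category, $f_{yx}^{\op}\circ f_{zy}^{\op}=(f_{zy}f_{yx})^{\op}$. By \eqref{eq:higher-composition}, this equals $f_{zx}^{\op}$ if $x\preceq z$ and $0$ otherwise. The image product is $f_{\sigma(x)\sigma(y)}f_{\sigma(y)\sigma(z)}$, which equals $f_{\sigma(x)\sigma(z)}$ if $\sigma(z)\preceq\sigma(x)$ and $0$ otherwise. By the first step, the condition $\sigma(z)\preceq\sigma(x)$ is equivalent to $x\preceq z$, so the two rules agree case by case. Bilinear extension then gives a functor, and it is an isomorphism because $\sigma_N$ is an involution. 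Restricting to $\Omega$ and $\sigma_N(\Omega)$ is automatic, since everything is defined vertexwise on full subcategories.

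I expect the main obstacle to be bookkeeping rather than mathematics. One must confirm that the corner of $\mathsf A_{N,m}$ really is the category algebra of the full subcategory, with no extra paths or relations coming from outside $\Omega$. Here the definition via \eqref{eq:higher-hom} helps: $\mathsf A_{N,m}$ is defined by its Hom spaces, not by a quiver presentation, so $e_x\mathsf A_{N,m}e_y$ is literally the Hom space and the issue does not arise. Unlike Proposition \ref{prop:corner}, no lower-ideal hypothesis on $\Omega$ is needed. The remaining care is in the orientation conventions: rows are targets, and the opposite reverses the product. These only affect which of $f_{yx}$ and $f_{xy}$ appears, not the validity of the argument.
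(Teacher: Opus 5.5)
Your proposal is correct and follows essentially the same route as the paper. Both arguments show that $x\preceq y$ holds if and only if $\sigma_N(y)\preceq\sigma_N(x)$, apply the same check to the outer pair of a composable triple so that both cases of \eqref{eq:higher-composition} (including the forced zeros) are preserved, obtain an anti-automorphism of the full higher Auslander category, and then restrict to $\Omega$. Your extra bookkeeping makes explicit what the paper leaves implicit: that the corner is the category algebra of the full subcategory on $\Omega$, so no lower-ideal hypothesis is needed, and that $\sigma_N$ is an involution.
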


\begin{proof}
For $x,y\in\os_m(N)$, the inequalities $x\preceq y$ are equivalent,
after subtracting every entry from $N+1$ and reversing the order, to
$\sigma_N(y)\preceq\sigma_N(x)$.  Thus a basis morphism $x\to y$
is sent to a basis morphism
$\sigma_N(y)\to\sigma_N(x)$.  Applied to three vertices, the same
calculation shows that the outer pair interlaces before reflection if and
only if the reflected outer pair interlaces.  The rule therefore preserves
both cases of the multiplication formula
\eqref{eq:higher-composition}, including its zero products.  It is an
anti-isomorphism of the full higher Auslander category; restricting the
objects to $\Omega$ gives the claimed corner isomorphism.
\end{proof}

\begin{theorem}[Fukaya model for finite staircases]
\label{thm:staircase-fukaya-corner}
Let $1\leq m\leq N$, and let
$\Omega(H)\subseteq\os_m(N)$ be a nonempty finite coordinate staircase.
Put $\overline\Omega(H)=\sigma_N(\Omega(H))$.
Then there is a triangle equivalence
\begin{equation}
 \Db{\kk\Omega(H)}\simeq\thick_{H^0(\Wcat_N^{(m)})}
 \{L_I:I\in\overline\Omega(H)\}.
\label{eq:staircase-fukaya-corner}
\end{equation}
\end{theorem}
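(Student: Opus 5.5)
The proof will chain four equivalences, and the main care point is the left/right module convention. First, Proposition \ref{prop:corner} gives $\Db{\kk\Omega(H)}\simeq\Db{e_\Omega A e_\Omega}$, where $A$ is the higher Auslander category algebra on $\os_m(N)$, that is, $\mathsf A_{N,m}$ in the notation of \eqref{eq:DJL-index}. All algebras involved have finite global dimension: $\kk\Omega(H)$ is directed, and the corner is derived equivalent to it. Hence bounded derived categories coincide with perfect categories, and $\Db{e_\Omega Ae_\Omega}\simeq\perf_{\mathrm{left}}(e_\Omega Ae_\Omega)$.

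Second, I convert left modules into right modules. Left $e_\Omega Ae_\Omega$-modules are right modules over $(e_\Omega Ae_\Omega)^{\op}$. By Lemma \ref{lem:reflected-corner}, that algebra is $e_{\overline\Omega}\mathsf A_{N,m}e_{\overline\Omega}$ with $\overline\Omega=\sigma_N(\Omega(H))$. This is exactly why the reflected index set $\overline\Omega(H)$ appears in the statement. So $\Db{\kk\Omega(H)}\simeq\perf(e_{\overline\Omega}\mathsf A_{N,m}e_{\overline\Omega})$ in the right-module convention of \eqref{eq:enhanced-perf-convention}.

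Third, Lemma \ref{lem:corner-perfect} with $e=e_{\overline\Omega}$ embeds $\perf(e\mathsf A_{N,m}e)$ fully faithfully into $\perf(\mathsf A_{N,m})$. Its essential image is $\thick(e\mathsf A_{N,m})=\thick\{e_I\mathsf A_{N,m}:I\in\overline\Omega\}$.

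Fourth, I apply $H^0$ of the DJL quasi-equivalence \eqref{eq:DJL-equivalences}. It is a triangle equivalence $\perf(\mathsf A_{N,m})\simeq H^0(\Wcat_N^{(m)})$ sending the indecomposable projective $e_I\mathsf A_{N,m}$ to $L_I$. A triangle equivalence carries thick closures to thick closures, which yields \eqref{eq:staircase-fukaya-corner}.

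The main obstacle is matching conventions in the last step. DJL's equivalence sends projectives to Lagrangians with a specific choice: left versus right projectives, and a labelling of $L_I$ possibly by $\sigma_N(I)$. This choice determines whether the reflection from Lemma \ref{lem:reflected-corner} is needed, or whether it cancels against DJL's own convention.

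I would first fix the convention as in the statement, namely right projectives $e_I\mathsf A_{N,m}$ mapping to $L_I$. The proof then follows with the reflection as above. If DJL's labelling turns out to be reversed, the same argument works after composing with the self-anti-isomorphism $\sigma_N$ of the full algebra, which is the full-category case of Lemma \ref{lem:reflected-corner}. That composition replaces $\overline\Omega$ by $\Omega$ consistently.
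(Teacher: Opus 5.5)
Your proposal is correct and follows the paper's proof essentially step for step. You use Proposition~\ref{prop:corner} and finite global dimension to pass to perfect complexes, then the left-to-right module switch via Lemma~\ref{lem:reflected-corner} (which is exactly where $\overline\Omega(H)$ enters), then Lemma~\ref{lem:corner-perfect}, and finally the DJL equivalence sending $e_I\mathsf A_{N,m}$ to $L_I$.

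Your closing contingency is not needed, since the paper fixes exactly the convention you adopt. Note also that under a reversed labelling your fallback would prove the variant indexed by $\Omega(H)$, not the statement as written, so ``the same argument works'' overstates it.
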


\begin{proof}

Since $\kk\Omega(H)$ has finite global dimension and is derived equivalent
to $e_{\Omega(H)}Ae_{\Omega(H)}$, the latter algebra also has finite global
dimension. Hence its bounded derived category agrees with its perfect
derived category.

Let $A=\mathsf A_{N,m}$.  Proposition \ref{prop:corner} gives $\Db{\kk\Omega(H)}
 \simeq\Db{e_{\Omega(H)}Ae_{\Omega(H)}}$.
Our covariant-module convention identifies the bounded derived category of
left $e_{\Omega(H)}Ae_{\Omega(H)}$-modules with the perfect category of
right $(e_{\Omega(H)}Ae_{\Omega(H)})^{\op}$-modules.  By Lemma
\ref{lem:reflected-corner}, $(e_{\Omega(H)}Ae_{\Omega(H)})^{\op}
 \cong
 e_{\overline\Omega(H)}Ae_{\overline\Omega(H)}$.
Lemma \ref{lem:corner-perfect} identifies the perfect category of this
corner with $\thick_{\perf(A)}
 \{e_IA:I\in\overline\Omega(H)\}$.
Finally, the $H^0$-level of the first quasi-equivalence in
\eqref{eq:DJL-equivalences} sends $e_IA$ to the standard product
Lagrangian $L_I$.  This proves \eqref{eq:staircase-fukaya-corner}.
\end{proof}

\begin{corollary}
\label{thm:dyck-fukaya-corner}
Let $a,b\geq2$, and set
\begin{equation}
 s=\left\lceil\frac ab\right\rceil,\qquad
 m=a-s,\qquad N=a+b-s,
\label{eq:fukaya-local-indices}
\end{equation}
Define
\begin{equation}
 \sigma_N(z_1,\ldots,z_m)
 =
 (N+1-z_m,\ldots,N+1-z_1)
\label{eq:sigma-local}
\end{equation}
and $\overline\Omega_{a,b}=\sigma_N(\Omega_{a,b})$.  There is a triangle
equivalence
\begin{equation}
 \Db{\kk\Dyck^{\mathrm{below}}_{a,b}}
 \simeq
 \thick_{H^0(\Wcat_N^{(m)})}
 \{L_I:I\in\overline\Omega_{a,b}\}.
\label{eq:dyck-fukaya-corner}
\end{equation}
\end{corollary}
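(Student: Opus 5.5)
The plan is to reduce the corollary to Theorem \ref{thm:staircase-fukaya-corner}, applied to the staircase $\Omega_{a,b}$ with the indices $m=a-s$ and $N=a+b-s$ of \eqref{eq:fukaya-local-indices}. First, Proposition \ref{prop:dyck-staircase} gives an order isomorphism $\beta\colon\Dyck^{\mathrm{below}}_{a,b}\to\Omega_{a,b}$, and hence an isomorphism of incidence categories $\kk\Dyck^{\mathrm{below}}_{a,b}\cong\kk\Omega_{a,b}$. Therefore $\Db{\kk\Dyck^{\mathrm{below}}_{a,b}}\simeq\Db{\kk\Omega_{a,b}}$.

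Next I will check the hypotheses of Theorem \ref{thm:staircase-fukaya-corner}. Nonemptiness is clear, since $(1,2,\ldots,m)\in\Omega_{a,b}$. We need $1\leq m\leq N$ and $\Omega_{a,b}\subseteq\os_m(N)$; the first holds because $b\geq1$ gives $m<N$. The main obstacle is the containment: every coordinate must satisfy $z_j\leq N$, and by strict increase it suffices to bound $z_m$. For $j=m$ the bound is
\[
H_m^{a,b}=m+\left\lfloor\frac{b(a-1)}{a}\right\rfloor\leq m+b=N,
\]
since $b(a-1)/a<b$. This matches the fact that the original Dyck coordinates lie in $[a+b]$ and $\beta$ subtracts $s$. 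We also need $m\geq1$, that is, $s<a$. This holds because $b\geq2$ gives $a/b<a$, hence $\lceil a/b\rceil\leq a$. Equality $\lceil a/b\rceil=a$ would force $a-1<a/b$, i.e.\ $a(b-1)<b$, which is impossible for $a,b\geq2$.

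Finally, Theorem \ref{thm:staircase-fukaya-corner} gives
\[
\Db{\kk\Omega_{a,b}}\simeq\thick_{H^0(\Wcat_N^{(m)})}\{L_I:I\in\sigma_N(\Omega_{a,b})\}.
\]
Here the reflection $\sigma_N$ in \eqref{eq:sigma-local} coincides with the one used in that theorem and in Lemma \ref{lem:reflected-corner}. Composing this with the first step yields \eqref{eq:dyck-fukaya-corner}.
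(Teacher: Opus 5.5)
Your proposal is correct and follows the paper's proof: you transport along the order isomorphism $\beta$ of Proposition \ref{prop:dyck-staircase}, then apply Theorem \ref{thm:staircase-fukaya-corner} to $\Omega_{a,b}\subseteq\os_m(N)$. Your explicit checks that $1\leq m<N$ and $H_m^{a,b}\leq N$ spell out hypotheses the paper only asserts in passing.
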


\begin{proof}
By Proposition \ref{prop:dyck-staircase}, the below-diagonal Dyck poset is
isomorphic to the coordinate staircase $\Omega_{a,b}\subseteq\os_m(N)$.
Apply Theorem \ref{thm:staircase-fukaya-corner} to this staircase.
\end{proof}

\begin{remark}
\label{rem:corner-convention}
For any staircase $\Omega(H)$, if one works throughout with right modules
instead of the covariant functors used here, the indexing set in
\eqref{eq:staircase-fukaya-corner} is $\Omega(H)$ rather than its reflection
$\overline\Omega(H)$.  Thus the reflection records a module-side convention,
not a geometric difference.
\end{remark}

\subsection{A symplectic form of the CLR equivalence}

\begin{theorem}
\label{thm:symplectic-CLR}
Let $a,b$ be coprime positive integers and put $r=a+b$.  There are
triangle equivalences
\begin{equation}
\begin{split}
 \Db{\kk(\ovr A_r\times\Dyck^{\mathrm{above}}_{a,b}}
 &\simeq\Db{\kk L_{a,b}}\\
 &\simeq H^0(\Wcat_r^{(a)})
 \simeq H^0(\Wcat_r^{(b)}).
\end{split}
\label{eq:symplectic-CLR}
\end{equation}
\end{theorem}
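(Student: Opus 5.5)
The first equivalence in \eqref{eq:symplectic-CLR} is exactly Theorem \ref{thm:CLR-conjecture}, so the plan is to append the Fukaya-categorical identifications to the end of that chain.

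\textbf{Step 1: from $L_{a,b}$ to a higher Auslander algebra.} Equation \eqref{eq:Gottesman} from the proof of Theorem \ref{thm:CLR-conjecture} gives $\Db{\kk L_{a,b}}\simeq\Db{A^{a}_{b+1}}$ in Xing's notation. The index conversion \eqref{eq:DJL-index}, with $n=r=a+b$ and $d=a$, gives $A^{a}_{b+1}=A^{a}_{r-a+1}=\mathsf A_{r,a}$. So
\[
 \Db{\kk L_{a,b}}\simeq\Db{\mathsf A_{r,a}}.
\]

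\textbf{Step 2: pass from the bounded to the perfect derived category.} $\mathsf A_{r,a}$ is a higher Auslander algebra, so it has finite global dimension. Alternatively, $\kk L_{a,b}$ is an incidence algebra of a finite poset and hence has finite global dimension, and this property is invariant under derived equivalence. Therefore $\Db{\mathsf A_{r,a}}=\perf(\mathsf A_{r,a})$.

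A convention issue arises here. Our $\Db{}$ uses covariant functors, i.e.\ left modules, while $\perf$ in \eqref{eq:enhanced-perf-convention} uses right modules. So we actually obtain $\perf(\mathsf A_{r,a}^{\op})$. This is resolved by Lemma \ref{lem:A-self-op}, or equivalently by Lemma \ref{lem:reflected-corner} applied with $\Omega=\os_a(r)$, which is $\sigma_r$-stable. Either gives $\mathsf A_{r,a}^{\op}\cong\mathsf A_{r,a}$.

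\textbf{Step 3: invoke \cite{DJL}.} Taking $H^0$ of \eqref{eq:DJL-equivalences} with $n=r$ and $d=a$ gives
\[
 \perf(\mathsf A_{r,a})\simeq H^0(\Wcat_r^{(a)})\simeq\perf(\mathsf A_{r,r-a})=\perf(\mathsf A_{r,b}).
\]
This yields the last two equivalences.

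For a consistency check, apply \eqref{eq:DJL-equivalences} again with $d=b$. This gives $H^0(\Wcat_r^{(b)})\simeq\perf(\mathsf A_{r,b})$, which agrees with the above. Coprimality is used only through Theorem \ref{thm:CLR-conjecture}.

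\textbf{Degenerate cases.} When $a=1$ or $b=1$, the Dyck poset is a singleton, and Theorem \ref{thm:CLR-conjecture} already covers the first equivalence. The remaining steps apply verbatim, because \eqref{eq:DJL-equivalences} requires only $1\le d\le n-1$, and $d=a$ and $d=b$ both lie in this range.

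\textbf{Main obstacle.} The content is routine, so the main care is bookkeeping. Three index systems must be matched: Gottesman's $A^{a-1}_{b+1}$, Xing's $A^a_{b+1}$, and DJL's $\mathsf A_{r,a}$. The left/right module convention must also be handled, which the self-opposite lemma does. Finally, one must confirm that the $H^0$-level statement of \cite{DJL} is triangulated and idempotent complete, matching $\perf$.
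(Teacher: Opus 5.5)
Your proposal is correct and follows the paper's proof. Both use Theorem \ref{thm:CLR-conjecture} for the first equivalence, then \eqref{eq:Gottesman} with the index conversion \eqref{eq:DJL-index}, then finite global dimension together with Lemma \ref{lem:A-self-op} to match the left-module $\Db{}$ with the right-module $\perf$ of \cite{DJL}, and finally \eqref{eq:DJL-equivalences}. One small point: your ``consistency check'' applying \eqref{eq:DJL-equivalences} with $d=b$ is not optional, since the $d=a$ instance alone ends at $\perf(\mathsf A_{r,b})$ and this second application is what reaches $H^0(\Wcat_r^{(b)})$.
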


\begin{proof}
The first equivalence is Theorem \ref{thm:CLR-conjecture}.  By
\eqref{eq:Gottesman} and the index conversion
\eqref{eq:DJL-index},
\[
 \Db{\kk L_{a,b}}
 \simeq\Db{A_{b+1}^{a}}
 =\Db{\mathsf A_{r,a}}.
 \]
The algebra $\mathsf A_{r,a}$ has finite global dimension and is
isomorphic to its opposite by Lemma \ref{lem:A-self-op}.  Hence its bounded
derived category in our left-module convention agrees with
$\perf(\mathsf A_{r,a})$ in the right-module convention of
\cite{DJL}.  The remaining two equivalences are
\eqref{eq:DJL-equivalences}, since $r-a=b$.
\end{proof}

Thus the transposition symmetry of the $a\times b$ rectangle is realised
symplectically by changing the symmetric-product degree from $a$ to
$b=r-a$.

\subsection{Fukaya--Seidel and Serre-functor consequences}

\begin{lemma}
\label{lem:transport-serre}
Let $\mathcal D$ and $\mathcal E$ be Hom-finite
$\kk$-linear triangulated categories with Serre functors
$S_{\mathcal D}$ and $S_{\mathcal E}$.  If
$F\colon\mathcal D\to\mathcal E$ is a triangle equivalence, then
\[
 F S_{\mathcal D}\cong S_{\mathcal E}F.
\]
Consequently, an isomorphism $S_{\mathcal D}^{q}\cong[p]$, or its
twisted version $S_{\mathcal D}^{q}\cong\phi[p]$, transports to the
corresponding isomorphism on $\mathcal E$.
\end{lemma}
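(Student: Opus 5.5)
The argument rests on the uniqueness of Serre functors up to natural isomorphism. I would begin by defining $S':=F S_{\mathcal D}F^{-1}$, where $F^{-1}$ is a quasi-inverse of $F$. For $X,Y\in\mathcal E$ there is a chain of isomorphisms natural in both variables:
\[
 \Hom_{\mathcal E}(X,S'Y)
 \cong\Hom_{\mathcal D}(F^{-1}X,S_{\mathcal D}F^{-1}Y)
 \cong D\Hom_{\mathcal D}(F^{-1}Y,F^{-1}X)
 \cong D\Hom_{\mathcal E}(Y,X).
\]
The first and last steps use full faithfulness of $F$ together with the unit and counit isomorphisms $FF^{-1}\cong\id$ and $F^{-1}F\cong\id$. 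The middle step is Serre duality in $\mathcal D$. So $S'$ is a Serre functor on $\mathcal E$.

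Next I would invoke the standard Yoneda argument: any two Serre functors on a Hom-finite category are naturally isomorphic. Given $D\Hom(Y,-)\cong\Hom(-,S'Y)\cong\Hom(-,S_{\mathcal E}Y)$, naturally in $Y$, Yoneda's lemma produces $S'Y\cong S_{\mathcal E}Y$, and this isomorphism is natural in $Y$. Therefore $FS_{\mathcal D}F^{-1}\cong S_{\mathcal E}$, and precomposing with $F$ gives $FS_{\mathcal D}\cong S_{\mathcal E}F$. If one also wants compatibility with the triangulated structures, the Bondal--Kapranov result that Serre functors are exact, with the isomorphism respecting the suspension, handles it. For the stated isomorphism of functors, the additive statement already suffices.

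For the consequence, iterating gives $FS_{\mathcal D}^q\cong S_{\mathcal E}^qF$. Because $F$ is a triangle functor it commutes with $[p]$. So if $S_{\mathcal D}^q\cong\phi[p]$, then
\[
 S_{\mathcal E}^q\cong FS_{\mathcal D}^qF^{-1}\cong (F\phi F^{-1})[p],
\]
which is the transported isomorphism; taking $\phi=\id$ gives the untwisted case.

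I do not expect a serious obstacle. The one delicate point is checking that the composite isomorphism in the first step is natural in both $X$ and $Y$, since the Yoneda step needs that naturality. This reduces to the naturality of the unit and counit of the adjoint equivalence.
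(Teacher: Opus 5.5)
Your proposal is correct and follows essentially the same route as the paper: show that $FS_{\mathcal D}F^{-1}$ satisfies Serre duality on $\mathcal E$, invoke uniqueness of Serre functors, and iterate to transport the periodicity statement. The only difference is that you run the chain of isomorphisms for arbitrary $X,Y\in\mathcal E$ through $F^{-1}$, whereas the paper runs it for objects of the form $FX,FY$; your version makes the use of essential surjectivity and naturality explicit.
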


\begin{proof}
For $X,Y\in\mathcal D$, Serre duality and full faithfulness give natural
isomorphisms
\[
\begin{split}
 \Hom_{\mathcal E}(FY,FS_{\mathcal D}X)
 &\cong\Hom_{\mathcal D}(Y,S_{\mathcal D}X)\\
 &\cong D\Hom_{\mathcal D}(X,Y)
 \cong D\Hom_{\mathcal E}(FX,FY).
\end{split}
\]
Thus $FS_{\mathcal D}F^{-1}$ is a Serre functor on $\mathcal E$.
Uniqueness of Serre functors yields
$FS_{\mathcal D}F^{-1}\cong S_{\mathcal E}$.  Iterating this natural
isomorphism proves the periodicity assertion.
\end{proof}

Di Dedda \cite[Corollary~1.4]{DiDedda} considers the symmetric Brieskorn--Pham
singularity
\begin{equation}
 f_{n,d}:\operatorname{Sym}^{d}(\mathbb C)\cong\mathbb C^d
 \longrightarrow\mathbb C,\qquad
 \{x_1,\ldots,x_d\}\longmapsto
 x_1^{n+1}+\cdots+x_d^{n+1},
\label{eq:symmetric-BP}
\end{equation}
and proves the first of the following quasi-equivalences of pretriangulated
$A_\infty$-categories; the second is \cite[Theorem~1]{DJL}:
\begin{equation}
 \Fcat(f_{n,d})
 \simeq\operatorname{Perf}_{\mathrm{dg}}(\mathsf A_{n,d})
 \simeq\Wcat_n^{(d)}.
\label{eq:DiDedda}
\end{equation}
Consequently, Theorem \ref{thm:symplectic-CLR} yields
\begin{equation}
 \Db{\kk L_{a,b}}
 \simeq H^0\Fcat(f_{a+b,a}),
\label{eq:FS-lattice}
\end{equation}
and the same Fukaya--Seidel category models the left-hand side of the CLR
equivalence.

The Serre functor also becomes geometric.  Dyckerhoff--Jasso--Lekili
\cite[Proposition~2.5.1 and the following remark]{DJL} show
that a graded lift of the rotation of $D$ through
$2\pi/(n+1)$ induces the Serre functor $S$ on
$H^0(\Wcat_n^{(d)})$, and that
\begin{equation}
 S^{n+1}\cong[d(n-d)].
\label{eq:DJL-Serre}
\end{equation}
For $n=a+b$ and $d=a$, this gives
\begin{equation}
 S^{a+b+1}\cong[ab]
\label{eq:lattice-Serre}
\end{equation}
on every category in \eqref{eq:symplectic-CLR}.  In particular, the
fractional Calabi--Yau dimension $ab/(a+b+1)$ of the full path lattice is
the categorical shadow of periodic rotation of the stopped disk.  The
local Dyck corner has a different Serre-periodicity behaviour, which is
left for future work.  The passage of the Serre relation
through the algebraic and symplectic equivalences is justified by
Lemma \ref{lem:transport-serre}; no identification of Serre functors is being
assumed implicitly.

\subsection{Categorical refinement and a geometric lifting problem}

For the remainder of this section assume $a,b\geq2$.  Put
\begin{equation}
 \mathcal K_{a,b}
 =
 \thick_{H^0(\Wcat_N^{(m)})}
 \{L_I:I\in\overline\Omega_{a,b}\}.
\label{eq:dyck-fukaya-subcategory}
\end{equation}
By Theorem \ref{thm:dyck-fukaya-corner}, this category is triangle equivalent to
each of the staircase categories
$\Db{\C_j(\Omega_{a,b})}$.  We now choose these equivalences compatibly
and identify the images of their standard projective generators.

Let $\widetilde{\mathcal K}_{a,b}$ be the full pretriangulated and
idempotent-complete $A_\infty$-subcategory of $\Wcat_N^{(m)}$ split-generated
by the product Lagrangians $L_I$ with
$I\in\overline\Omega_{a,b}$.  Thus
\begin{equation}
 H^0(\widetilde{\mathcal K}_{a,b})=\mathcal K_{a,b}.
\label{eq:enhanced-K}
\end{equation}
For every $j$, we use
\begin{equation}
 \operatorname{Perf}_{\mathrm{dg}}
 \!\left(\operatorname{Alg}(\C_j(\Omega_{a,b}))^{\op}\right)
\label{eq:enhanced-Cj}
\end{equation}
as the dg enhancement of $\Db{\C_j(\Omega_{a,b})}$.  Their cohomological
categories agree because these category algebras have finite global
dimension by the staircase construction.

For $v\in\Omega_{a,b}$, let
\begin{equation}
 P_v^{(j)}
 =
 \Hom_{\C_j(\Omega_{a,b})}(v,-).
\label{eq:staircase-projective-j}
\end{equation}
For $1\leq j\leq m-1$, write $v=(p,t)$ according to the prefix--tail
decomposition used in Proposition \ref{prop:one-step}, and put
\begin{equation}
 T_v^{(j)}
 =
 (\iota_t)_*P_p,
 \qquad
 T^{(j)}
 =
 \bigoplus_{v\in\Omega_{a,b}}T_v^{(j)}.
\label{eq:staircase-tilting-j}
\end{equation}
Here $P_p=\Hom_{F_j(t)}(p,-)$, and $(\iota_t)_*$ is the exact right
Kan extension appearing in Lemma \ref{lem:kan}.  The $j$-th staircase
equivalence is
\begin{equation}
 \mathsf F_j
 =
 \RHom_{\C_{j-1}(\Omega_{a,b})}
 \bigl(T^{(j)},-\bigr)
 \colon
 \Db{\C_{j-1}(\Omega_{a,b})}
 \longrightarrow
 \Db{\C_j(\Omega_{a,b})}.
\label{eq:staircase-step-functor}
\end{equation}
It has a standard dg lift
\begin{equation}
 \widetilde{\mathsf F}_j\colon
 \operatorname{Perf}_{\mathrm{dg}}
 \!\left(\operatorname{Alg}(\C_{j-1}(\Omega_{a,b}))^{\op}\right)
 \longrightarrow
 \operatorname{Perf}_{\mathrm{dg}}
 \!\left(\operatorname{Alg}(\C_j(\Omega_{a,b}))^{\op}\right),
\label{eq:enhanced-staircase-step}
\end{equation}
given by the derived Hom dg bimodule associated with $T^{(j)}$; its
cohomological functor is $\mathsf F_j$.  Choose an enhanced
quasi-equivalence
\begin{equation}
 \widetilde\Phi_{m-1}\colon
 \operatorname{Perf}_{\mathrm{dg}}
 \!\left(\operatorname{Alg}(\C_{m-1}(\Omega_{a,b}))^{\op}\right)
 \longrightarrow\widetilde{\mathcal K}_{a,b}
\label{eq:enhanced-final-Phi}
\end{equation}
by composing the dg corner embedding, the reflection isomorphism of
Lemma \ref{lem:reflected-corner}, and the quasi-equivalence
\eqref{eq:DJL-equivalences}.  Recursively define
\begin{equation}
 \widetilde\Phi_{j-1}
 =\widetilde\Phi_j\circ\widetilde{\mathsf F}_j
 \qquad(1\leq j\leq m-1),
\label{eq:enhanced-Phi-recursion}
\end{equation}
and put $\Phi_j=H^0(\widetilde\Phi_j)$.  These choices make all the
equivalences below compatible at the enhanced level.
For $v\in\Omega_{a,b}$, set
\begin{equation}
 \widetilde{\mathbb L}_v^{(j)}
 =\widetilde\Phi_j(P_v^{(j)})
 \quad\text{in }\widetilde{\mathcal K}_{a,b},
\label{eq:enhanced-L-j-definition}
\end{equation}
and write $\mathbb L_v^{(j)}$ for the same object viewed in
$H^0(\widetilde{\mathcal K}_{a,b})=\mathcal K_{a,b}$.  Put
$\widetilde{\mathbb L}^{(j)}=
\{\widetilde{\mathbb L}_v^{(j)}:v\in\Omega_{a,b}\}$.

\begin{theorem}
\label{thm:categorical-fukaya-staircase}
For every $0\leq j\leq m-1$, there is a triangle equivalence
\begin{equation}
 \Phi_j\colon
 \Db{\C_j(\Omega_{a,b})}
 \longrightarrow
 \mathcal K_{a,b}
\label{eq:Phi-j}
\end{equation}
and a collection
\begin{equation}
 \mathbb L^{(j)}
 =
 \{\mathbb L_v^{(j)}:v\in\Omega_{a,b}\}
 \subseteq\mathcal K_{a,b}
\label{eq:L-j-collection}
\end{equation}
with the following properties.
\begin{enumerate}
\item The collection $\mathbb L^{(j)}$ split-generates
$\mathcal K_{a,b}$.
\item For $u,v\in\Omega_{a,b}$,
\begin{equation}
 \Hom_{\mathcal K_{a,b}}
 \bigl(\mathbb L_u^{(j)},\mathbb L_v^{(j)}[r]\bigr)
 =
 \begin{cases}
  \Hom_{\C_j(\Omega_{a,b})}(v,u),&r=0,\\
  0,&r\neq0.
 \end{cases}
\label{eq:L-j-Hom}
\end{equation}
Consequently,
\begin{equation}
 \End_{\mathcal K_{a,b}}
 \left(
  \bigoplus_{v\in\Omega_{a,b}}\mathbb L_v^{(j)}
 \right)^{\op}
 \cong
 \operatorname{Alg}\!\left(\C_j(\Omega_{a,b})\right).
\label{eq:L-j-end}
\end{equation}
Moreover, the full $A_\infty$-subcategory on chosen enhanced representatives
$\widetilde{\mathbb L}^{(j)}$ of $\mathbb L^{(j)}$ is formal and is
quasi-equivalent to
$\C_j(\Omega_{a,b})^{\op}$, regarded as an $A_\infty$-category
concentrated in degree zero.
\item For the final staircase category, the collection can be chosen as
\begin{equation}
 \mathbb L_v^{(m-1)}
 \cong
 L_{\sigma_N(v)}.
\label{eq:final-product-lagrangian}
\end{equation}
\item For $1\leq j\leq m-1$, one has
\begin{equation}
 \mathbb L_v^{(j)}
 \cong
 \Phi_{j-1}\bigl(T_v^{(j)}\bigr).
\label{eq:Kan-Fukaya-image}
\end{equation}
Thus the passage from the generating collection
$\mathbb L^{(j-1)}$ to $\mathbb L^{(j)}$ is precisely the
categorical change of generators induced by the Kan-extension tilting
object $T^{(j)}$.
\end{enumerate}
\end{theorem}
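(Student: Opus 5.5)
The plan is to read everything off the recursive construction \eqref{eq:enhanced-Phi-recursion}. Define $\Phi_j=H^0(\widetilde\Phi_j)$. Each $\widetilde\Phi_j$ is a composite of quasi-equivalences: $\widetilde\Phi_{m-1}$ by construction, and each $\widetilde{\mathsf F}_j$ because $T^{(j)}$ is tilting by Proposition \ref{prop:one-step} and Theorem \ref{thm:relative}, so Rickard's theorem applies. Hence every $\Phi_j$ is a triangle equivalence $\Db{\C_j(\Omega_{a,b})}\to\mathcal K_{a,b}$. Finite global dimension, via Corollary \ref{cor:directed-relative} and Theorem \ref{thm:staircase}, identifies $\Db{}$ with $\perf$ on each side.

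Next define $\mathbb L_v^{(j)}=\Phi_j(P_v^{(j)})$ and deduce (1) and (2) by transport.

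\begin{itemize}
\item For (1): the representables $P_v^{(j)}$ sum to the regular module, so they generate $\Db{\C_j}$ as a thick subcategory. An equivalence preserves this, so the $\mathbb L_v^{(j)}$ split-generate $\mathcal K_{a,b}$.
\item For the Hom formula \eqref{eq:L-j-Hom}: full faithfulness and Yoneda give $\Hom(P_u,P_v[r])=\Ext^r(P_u,P_v)$. This vanishes for $r\ne0$ by projectivity, and equals $\Hom_{\C_j}(v,u)$ for $r=0$, with composition reversed.
\item Then \eqref{eq:L-j-end} follows from \eqref{eq:endcat-algebra}.
\item Formality: the dg endomorphism algebra of $\bigoplus_v P_v^{(j)}$ in $\operatorname{Perf}_{\mathrm{dg}}$ is quasi-isomorphic to $\operatorname{Alg}(\C_j)$ (or its opposite) in degree zero, because the projective modules can be used as their own cofibrant models. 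The quasi-equivalence $\widetilde\Phi_j$ transports this to the $A_\infty$-subcategory on $\widetilde{\mathbb L}^{(j)}$. An $A_\infty$-category quasi-equivalent to one concentrated in degree zero is formal.
\end{itemize}

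For (3), I would trace $P_v^{(m-1)}$ through the composite defining $\widetilde\Phi_{m-1}$.

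\begin{itemize}
\item Under the covariant-to-right-module convention, $P_v^{(m-1)}$ becomes the projective $e_v(eAe)^{\op}$-type summand.
\item The reflection of Lemma \ref{lem:reflected-corner} sends it to $e_{\sigma_N(v)}$ in the reflected corner.
\item The corner embedding of Lemma \ref{lem:corner-perfect} sends this to $e_{\sigma_N(v)}A$.
\item Finally, \eqref{eq:DJL-equivalences} sends $e_{\sigma_N(v)}A$ to $L_{\sigma_N(v)}$.
\end{itemize}

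This gives \eqref{eq:final-product-lagrangian}. The main point of care is matching the left/right and opposite conventions, so that the index really is $\sigma_N(v)$ and not $v$. I would settle this by checking it on the Hom formula, comparing $\Hom(L_{\sigma(u)},L_{\sigma(v)})$ with \eqref{eq:L-j-Hom}.

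For (4), I would use $\Phi_{j-1}=\Phi_j\circ\mathsf F_j$, so it suffices to show $\mathsf F_j(T_v^{(j)})\cong P_v^{(j)}$. Since $\mathsf F_j=\RHom(T^{(j)},-)$ and $T^{(j)}$ is rigid (Proposition \ref{prop:T-rigid}), we get $\mathsf F_j(T_v^{(j)})=\Hom(T^{(j)},T_v^{(j)})$, a projective module over $\End(T^{(j)})^{\op}\cong\operatorname{Alg}(\C_j)$. Proposition \ref{prop:end-relative} identifies it with $\Hom_{\Gs}(v,-)=P_v^{(j)}$ under $\Gs\cong\C_j$.

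I expect this last identification to be the main obstacle. One must check that the side conventions in Proposition \ref{prop:end-relative} (the $\mathcal E_T^{\op}$) make $\Hom(T,T_v)$ the representable at $v$, not the corepresentable. I would verify this through the Yoneda description \eqref{eq:uf}, and if needed replace $\RHom(T,-)$ by the appropriate variance so that \eqref{eq:staircase-step-functor} is consistent.
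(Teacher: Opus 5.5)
Your proposal is correct and follows the paper's proof essentially step for step: you transport generation and the Yoneda Hom computation through $\Phi_j$, trace $P_v^{(m-1)}$ through the reflection, the corner embedding and the Dyckerhoff--Jasso--Lekili equivalence, and use $\Phi_{j-1}=\Phi_j\circ\mathsf F_j$ with $\mathsf F_j(T_v^{(j)})\cong P_v^{(j)}$; the only difference is that the paper gets formality from a degree count on a strictly unital minimal model ($\mu^q$ has degree $2-q<0$ for $q\geq3$), whereas you use the projectives as their own cofibrant models. Your variance worry in (4) resolves in your favour, so \eqref{eq:staircase-step-functor} needs no change: $\Hom(T^{(j)},T_v^{(j)})\cong\epsilon_v\End(T^{(j)})$, with $\epsilon_v$ the idempotent of the summand $T_v^{(j)}$, is the projective $\End(T^{(j)})^{\op}\epsilon_v$, and under the object-preserving identification $\mathcal E_T^{\op}\cong\C_j$ of Proposition~\ref{prop:end-relative} this is the representable $\Hom_{\C_j}(v,-)=P_v^{(j)}$, not the corepresentable.
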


\begin{proof}
The enhanced functors have already been fixed in
\eqref{eq:enhanced-final-Phi}--\eqref{eq:enhanced-Phi-recursion}.  The
module-side convention and the reflection $\sigma_N$ imply $\Phi_{m-1}(P_v^{(m-1)})
 \cong L_{\sigma_N(v)}$.
The projectives $P_v^{(j)}$ split-generate
$\Db{\C_j(\Omega_{a,b})}$, and therefore their images split-generate
$\mathcal K_{a,b}$.

Since $P_u^{(j)}$ is projective,
$\Hom_{\Db{\C_j}}
 \bigl(P_u^{(j)},P_v^{(j)}[r]\bigr)=0
 \qquad(r\neq0)$.
For $r=0$, Yoneda's lemma gives $
 \Hom_{\C_j}
 \bigl(P_u^{(j)},P_v^{(j)}\bigr)
 \cong
 \Hom_{\C_j}(v,u)$.
Applying the fully faithful functor $\Phi_j$ proves
\eqref{eq:L-j-Hom} and \eqref{eq:L-j-end}.

Let $\widetilde{\mathcal E}^{(j)}$ be the full, strictly unital and
uncurved $A_\infty$-subcategory of $\widetilde{\mathcal K}_{a,b}$ on the
objects $\widetilde{\mathbb L}_v^{(j)}$.  Its cohomology is concentrated in
degree zero by \eqref{eq:L-j-Hom}.  Pass to a strictly unital minimal model.
Here $\mu^0=0$ because the category is uncurved and $\mu^1=0$ by
minimality.  For $q\geq3$, the operation $\mu^q$ has degree $2-q$ and
therefore vanishes: all inputs have degree zero, whereas the putative output
would have negative degree.  The remaining product $\mu^2$ is precisely the
composition in the labelled category $\C_j(\Omega_{a,b})^{\op}$, because
$H^0(\widetilde\Phi_j)=\Phi_j$ is fully faithful and preserves the Yoneda
composition between the labelled projectives $P_v^{(j)}$.  Thus the minimal
model is $\C_j(\Omega_{a,b})^{\op}$, proving formality and the asserted
quasi-equivalence.

It remains to verify the compatibility with the Kan-extension summands.
By Proposition \ref{prop:end-relative}, the opposite of the full subcategory
on the summands of $T^{(j)}$ is $\C_j(\Omega_{a,b})$.  Under this
identification,
\begin{equation}
 \mathsf F_j(T_v^{(j)})
 =
 \RHom_{\C_{j-1}}(T^{(j)},T_v^{(j)})
 \cong
 P_v^{(j)}.
\label{eq:Fj-Tv}
\end{equation}
Taking $H^0$ in \eqref{eq:enhanced-Phi-recursion}, we obtain $\Phi_{j-1}(T_v^{(j)})=
 \Phi_j\mathsf F_j(T_v^{(j)})
 \cong
 \Phi_j(P_v^{(j)})
 =\mathbb L_v^{(j)}$.
This proves \eqref{eq:Kan-Fukaya-image}.
\end{proof}

\begin{corollary}
\label{cor:twisted-complex-staircase}
Every object $\mathbb L_v^{(j)}$ admits a finite twisted-complex
presentation whose terms are standard product Lagrangians
\[
 L_I,\qquad I\in\overline\Omega_{a,b}.
\]
The presentation can be computed from finite projective resolutions and the
successive functors $\mathsf F_{j+1},\ldots,\mathsf F_{m-1}$.
\end{corollary}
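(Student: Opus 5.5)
The plan is a downward induction on $j$, starting from $j=m-1$, where the statement is essentially free. By Theorem \ref{thm:categorical-fukaya-staircase}(3), $\mathbb L_v^{(m-1)}\cong L_{\sigma_N(v)}$ is itself a standard product Lagrangian with $\sigma_N(v)\in\overline\Omega_{a,b}$. So it has the one-term twisted-complex presentation.

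For the inductive step, suppose every $\mathbb L_u^{(j)}$ has a finite twisted-complex presentation by the $L_I$, $I\in\overline\Omega_{a,b}$. Fix $v$. The first move is to compute $\mathbb L_v^{(j-1)}=\Phi_{j-1}(P_v^{(j-1)})=\Phi_j\mathsf F_j(P_v^{(j-1)})$, which uses \eqref{eq:enhanced-Phi-recursion}. Since $T^{(j)}$ is a tilting object (Theorem \ref{thm:relative}), the complex $\mathsf F_j(P_v^{(j-1)})=\RHom_{\C_{j-1}}(T^{(j)},P_v^{(j-1)})$ is perfect over $\operatorname{Alg}(\C_j)$. Because $\C_j(\Omega_{a,b})$ has finite global dimension, it is quasi-isomorphic to a bounded complex of finite direct sums of the projectives $P_u^{(j)}$. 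Concretely, I would compute its cohomology, which is the $\Ext$-groups $\Ext^r(T^{(j)}_u,P_v^{(j-1)})$, evaluated via the adjunction of Lemma \ref{lem:kan}. I would then take a finite projective resolution, or a minimal projective model, of this complex over $\C_j$.

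Next, apply the enhanced functor $\widetilde\Phi_j$ to this bounded complex of projectives. Each term $P_u^{(j)}$ goes to $\widetilde{\mathbb L}_u^{(j)}$, and the differentials go to degree-zero morphisms, which are identified by \eqref{eq:L-j-Hom}. Because $\widetilde\Phi_j$ is an $A_\infty$-functor into a pretriangulated category, a bounded complex of projectives is sent to a finite twisted complex in the $\widetilde{\mathbb L}_u^{(j)}$. Formality of the subcategory on $\widetilde{\mathbb L}^{(j)}$, from Theorem \ref{thm:categorical-fukaya-staircase}(2), lets me treat this as an honest twisted complex with no higher corrections.

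Finally, substitute the inductive presentations of each $\mathbb L_u^{(j)}$ and flatten. A twisted complex of twisted complexes is again a finite twisted complex, by the standard totalization in a pretriangulated $A_\infty$-category. This gives the presentation of $\mathbb L_v^{(j-1)}$ in terms of the $L_I$. Tracking the procedure shows it uses exactly finite projective resolutions and the successive functors $\mathsf F_{j+1},\dots,\mathsf F_{m-1}$, which is the computability claim.

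I expect the main obstacle to be the totalization step at the enhanced level. Substituting twisted complexes into the entries of another twisted complex requires the connecting morphisms to be lifted as closed morphisms between the inner twisted complexes, possibly with higher components. I would handle this by working inside $\operatorname{Perf}_{\mathrm{dg}}(\operatorname{Alg}(\C_{m-1})^{\op})$ and composing the dg functors $\widetilde{\mathsf F}$ there. This produces a single bounded complex of projectives $P_u^{(m-1)}$, which is a genuine dg object. Only then would I apply $\widetilde\Phi_{m-1}$ once, so that each $P_u^{(m-1)}$ becomes $L_{\sigma_N(u)}$.
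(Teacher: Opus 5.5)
Your proposal is correct, and the version you settle on at the end is exactly the paper's proof: compose $\mathsf F_{j+1},\ldots,\mathsf F_{m-1}$ on $P_v^{(j)}$ to get a single complex over $\C_{m-1}(\Omega_{a,b})$, replace it by a finite complex of projectives $P_w^{(m-1)}$ using finite global dimension, and apply $\Phi_{m-1}$ once so that $P_w^{(m-1)}\mapsto L_{\sigma_N(w)}$. Composing the functors first also makes your step-by-step downward induction, and the twisted-complex flattening you were worried about, unnecessary.
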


\begin{proof}
Every $\C_j(\Omega_{a,b})$ is finite and directed and therefore has finite
global dimension.  Hence every bounded complex obtained while applying the
successive derived Hom functors is perfect over
$\C_{m-1}(\Omega_{a,b})$.  It is consequently represented by a finite
complex of the projectives $P_w^{(m-1)}$.  Under $\Phi_{m-1}$, these
projectives become the standard product Lagrangians
$L_{\sigma_N(w)}$ by \eqref{eq:final-product-lagrangian}.  The resulting
finite complex is the required twisted-complex presentation.
\end{proof}

\begin{remark}
\label{rem:categorical-versus-geometric}
\Cref{thm:categorical-fukaya-staircase} identifies the specific
Kan-extension tilting functors inside the stopped symmetric-product Fukaya
category.  It does not assert that the twisted complexes
$\mathbb L_v^{(j)}$ are represented by single embedded Lagrangian branes.
\end{remark}

\begin{Prob}
Determine whether the twisted complexes
$\mathbb L_v^{(j)}$ admit representatives by exact graded embedded
Lagrangian branes and, if so, realise the passage from
$\mathbb L^{(j-1)}$ to $\mathbb L^{(j)}$ by explicit Lagrangian
mutations or stop-compatible handle slides.
\end{Prob}

A complete solution should additionally identify the half-square zero
composites introduced at the $j$-th staircase step with counts of
holomorphic polygons that disappear because their boundaries meet the stop.
This embedded-geometric lifting remains open and is not used in any theorem
above.

\section{Explicit examples}
\label{sec:examples}

\subsection{The \texorpdfstring{$(3,4)$}{(3,4)} staircase}

Let $(a,b)=(3,4)$.  Then
\[
 s=\left\lceil\frac34\right\rceil=1,\qquad m=2,
\]
and
\[
 H_1=1+\left\lfloor\frac43\right\rfloor=2,\qquad
 H_2=2+\left\lfloor\frac83\right\rfloor=4.
\]
Thus $\Omega_{3,4}=\{12,13,14,23,24\}$,
where $ij$ denotes the pair $(i,j)$.  Its Hasse quiver is
\[
\begin{tikzpicture}[
  baseline=(current bounding box.center),
  every node/.style={circle,draw,inner sep=1.5pt,minimum size=8mm},
  >=Stealth]
 \node (12) at (0,0) {$12$};
 \node (13) at (0,1.3) {$13$};
 \node (14) at (-1.25,2.6) {$14$};
 \node (23) at (1.25,2.6) {$23$};
 \node (24) at (0,3.9) {$24$};
 \draw[->] (12)--node[right,draw=none] {$\alpha$} (13);
 \draw[->] (13)--node[left,draw=none] {$\beta$} (14);
 \draw[->] (13)--node[right,draw=none] {$\gamma$} (23);
 \draw[->] (14)--node[left,draw=none] {$\delta$} (24);
 \draw[->] (23)--node[right,draw=none] {$\varepsilon$} (24);
\end{tikzpicture}
\]
The incidence category $\C_0$ has the diamond relation
\begin{equation}
 \delta\beta=\varepsilon\gamma.
\label{eq:34-diamond}
\end{equation}
The single tilting step $\C_0\simeq_{\mathrm{der}}\C_1$ introduces the
interlacing condition
\[
 x_1\leq y_1<x_2\leq y_2.
\]
Although both arrows in $12\xrightarrow{\alpha}13\xrightarrow{\gamma}23$
survive, their composite does not, because $2=y_1\nless x_2=2$
for the pair $12\to23$.  Therefore
\begin{equation}
 \C_1\cong
 \kk Q/
 \langle\gamma\alpha,\,
 \delta\beta-\varepsilon\gamma\rangle.
\label{eq:34-algebra}
\end{equation}
The relation \eqref{eq:34-diamond} then implies $\delta\beta\alpha=\varepsilon\gamma\alpha=0$,
as required by $\Hom_{\C_1}(12,24)=0$.  In this case
\[
 \Db{\kk\Omega_{3,4}}
 \simeq
 \Db{\kk Q/
 \langle\gamma\alpha,\delta\beta-\varepsilon\gamma\rangle},
\]
and the algebra on the right is Xing's corner $B_0=eA_5^2e$.

The relative construction is also completely visible here.  Split a vertex
$ij$ into prefix $i$ and tail $j$.  The tail poset is
$\{2<3<4\}$, and
\[
 F(2)=\{1\},\qquad
 F(3)=F(4)=\{1,2\}.
\]
The tilting object is the sum of the five Kan-extended representables
\[
 (\iota_2)_*P_1,\quad
 (\iota_3)_*P_1,\quad(\iota_3)_*P_2,\quad
 (\iota_4)_*P_1,\quad(\iota_4)_*P_2.
\]

\begin{proposition}
\label{prop:34-Kan-resolutions}
Write $P_{ij}^{(0)}=\Hom_{\C_0}(ij,-)$, and denote the five summands of
the Kan-extension tilting object by $T_{ij}^{(1)}$, indexed by the
corresponding vertices of $\Omega_{3,4}$.  Then
\[
\begin{aligned}
 0&\longrightarrow P_{13}^{(0)}
   \longrightarrow P_{12}^{(0)}
   \longrightarrow T_{12}^{(1)}
   \longrightarrow0,\\
 0&\longrightarrow P_{14}^{(0)}
   \longrightarrow P_{12}^{(0)}
   \longrightarrow T_{13}^{(1)}
   \longrightarrow0,\\
 0&\longrightarrow P_{24}^{(0)}
   \longrightarrow P_{23}^{(0)}
   \longrightarrow T_{23}^{(1)}
   \longrightarrow0
\end{aligned}
\]
are minimal projective resolutions, while
\[
 T_{14}^{(1)}\cong P_{12}^{(0)},
 \qquad
 T_{24}^{(1)}\cong P_{23}^{(0)}.
\]
\end{proposition}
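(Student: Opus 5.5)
The plan is to compute each summand $T_{ij}^{(1)}=(\iota_j)_*P_i^{F(j)}$ pointwise from the Kan-extension formula of Lemma \ref{lem:kan}, and then compare it with the cokernel of an explicit map between representables of $\C_0=\kk\Omega_{3,4}$.

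First we identify $\G(\X_1,Y_1,F_1)$ with $\C_0$, exactly as in the proof of Proposition \ref{prop:one-step}. Here $\X_1$ is the chain $1\to2$, $Y_1=\{2<3<4\}$, $F(2)=\{1\}$ and $F(3)=F(4)=\{1,2\}$. By \eqref{eq:kan-formula}, $T_{ij}^{(1)}$ takes the value $\Hom_{\X_1}(i,a)$ at a vertex $ab$ with $b\le j$, and $0$ otherwise. Since $\Hom_{\X_1}(i,a)\neq0$ exactly when $i\le a$, the support of $T_{ij}^{(1)}$ is $\{ab\in\Omega_{3,4}: i\le a,\ b\le j\}$. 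Every value is one-dimensional, and the structure maps are identities along this support.

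Next we read off the five supports. $T_{12}^{(1)}$ is supported on $\{12\}$, so it is the simple $S_{12}$. $T_{13}^{(1)}$ is supported on $\{12,13\}$. $T_{23}^{(1)}$ is supported on $\{23\}$. $T_{14}^{(1)}$ is supported on all vertices $ab$ with $b\le4$, which is all of $\Omega_{3,4}$. $T_{24}^{(1)}$ is supported on $\{23,24\}$.

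We then compare these with the representables of $\C_0$. The module $P_{xy}^{(0)}$ is the thin module supported on the up-set $\{z\ge xy\}$. Thus $P_{12}^{(0)}$ has support all of $\Omega_{3,4}$ and $P_{23}^{(0)}$ has support $\{23,24\}$. This gives $T_{14}^{(1)}\cong P_{12}^{(0)}$ and $T_{24}^{(1)}\cong P_{23}^{(0)}$, since a thin module with identity maps on a connected up-set generated by one vertex is that representable.

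For the other three summands, each support is the complement $U\setminus U'$ of up-sets, where $U=\{\ge v\}$ and $U'=\{\ge w\}$ with $v\le w$. Since basis morphisms in $\C_0$ compose to basis morphisms, the map $P_w^{(0)}\to P_v^{(0)}$ given by precomposition with $f_{wv}$ is injective. Its image is the thin submodule on $\{\ge w\}$, so the cokernel is the thin module on the difference. The required up-set identities are:
\[
\{\ge12\}\setminus\{\ge13\}=\{12\},\qquad
\{\ge12\}\setminus\{\ge14\}=\{12,13\},\qquad
\{\ge23\}\setminus\{\ge24\}=\{23\}.
\]
For the middle one we check that $\{\ge14\}=\{14,24\}$ and that $23\notin\{\ge14\}$. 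This yields the three short exact sequences, with $v,w$ equal to $(12,13)$, $(12,14)$ and $(23,24)$ respectively.

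Finally we prove minimality. Each middle term is the projective cover of an indecomposable module with simple top, namely the top at $12$, $12$ and $23$ respectively. The kernel is indecomposable projective and lies in the radical, so the resolution is minimal; no summand can split off because the map $P_w^{(0)}\to P_v^{(0)}$ is a radical morphism ($w\ne v$).

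The only delicate point is checking the structure maps, and the middle support is the one to handle with care. We must verify that the cokernel for $T_{13}^{(1)}$ has the arrow $\alpha$ acting by the identity. This follows because $\Hom_{\X_1}(1,1)\to\Hom_{\X_1}(1,1)$ is the identity under the vertical morphism $12\to13$ (layer $2\to3$). The other cases are immediate.
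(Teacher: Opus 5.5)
Your route is the same as the paper's. You read off the summands pointwise from \eqref{eq:kan-formula} and recognise them as thin modules. You then identify three of them as cokernels of injective maps $P_w^{(0)}\to P_v^{(0)}$ and the other two as representables, and you get minimality because the differentials land in the radical. However, the middle case is computed incorrectly.

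Your own support formula $\{ab: i\leq a,\ b\leq j\}$ with $(i,j)=(1,3)$ contains $23$, since $2\geq 1$ and $3\leq 3$. Indeed $T_{13}^{(1)}(23)=\Hom_{F(3)}(1,2)=\kk$, so $T_{13}^{(1)}$ is supported on $\{12,13,23\}$; the paper's table gives its dimension vector as $(1,1,0,1,0)$. Likewise $\{\geq 12\}\setminus\{\geq 14\}=\{12,13,23\}$, not $\{12,13\}$. Your own observation that $23\notin\{\geq 14\}=\{14,24\}$ is exactly what puts $23$ in the difference.

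The two slips happen to agree, so your comparison appears to go through, but as written it matches two wrongly computed supports. This is not cosmetic. For the thin module on $\{12,13\}$, the kernel of its projective cover $P_{12}^{(0)}\to M$ is the thin module on $\{14,23,24\}$. Its top sits at $14$ and $23$, so its projective cover $P_{14}^{(0)}\oplus P_{23}^{(0)}$ is $2$-dimensional at $24$. That kernel is therefore not projective, and the second sequence would be false for the module you described.

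The repair is immediate. Use the support $\{12,13,23\}$ on both sides. In your final paragraph, check besides $\alpha$ that $\gamma\colon 13\to 23$ also acts by the identity on $T_{13}^{(1)}$. This morphism is horizontal in layer $3$ and acts by postcomposition $\Hom_{\X_1}(1,1)\to\Hom_{\X_1}(1,2)$, $f_{11}\mapsto f_{21}$. With this correction your argument coincides with the paper's proof, including the minimality step.
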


\begin{proof}
Here modules are covariant $\kk$-linear functors.  Thus, for
$ij,uv\in\Omega_{3,4}$,
\[
P_{ij}^{(0)}(uv)
=
\Hom_{\C_0}(ij,uv)
=
\begin{cases}
\kk,&ij\leq uv,\\
0,&\text{otherwise}.
\end{cases}
\]
For every morphism $uv\to u'v'$, the corresponding structure map is
given by postcomposition.  Hence every nonzero structure map of
$P_{ij}^{(0)}$ is the identity after choosing the standard incidence
basis.  By Yoneda, $P_{ij}^{(0)}$ is the indecomposable projective
$\C_0$-module associated with the vertex $ij$.  The superscript $(0)$
records the category $\C_0$ and is not a cohomological degree.

For $v=(i,t)$, formula \eqref{eq:kan-formula} gives
\[
 T_v^{(1)}(a,b)
 =
 \begin{cases}
  \Hom_{F_1(t)}(i,a),&b\leq t,\\
  0,&b\nleq t.
 \end{cases}
\]
Consequently, with the vertices ordered as
\[
 12,\quad13,\quad14,\quad23,\quad24,
\]
the dimension vectors of the five Kan summands are
\[
\begin{array}{c|ccccc}
 &12&13&14&23&24\\ \hline
 T_{12}^{(1)}&1&0&0&0&0\\
 T_{13}^{(1)}&1&1&0&1&0\\
 T_{14}^{(1)}&1&1&1&1&1\\
 T_{23}^{(1)}&0&0&0&1&0\\
 T_{24}^{(1)}&0&0&0&1&1.
\end{array}
\]
All nonzero structure maps in these modules are identities.

The projective $P_{12}^{(0)}$ is one-dimensional at all five vertices.
Its radical is supported on
$\{13,14,23,24\}$, with all available structure maps equal to the
identity, and is therefore $P_{13}^{(0)}$.  It follows that
\[
 0\longrightarrow P_{13}^{(0)}
 \longrightarrow P_{12}^{(0)}
 \longrightarrow T_{12}^{(1)}
 \longrightarrow0
\]
is exact.

The natural quotient $P_{12}^{(0)}\to T_{13}^{(1)}$ is the identity at
$12,13,23$ and zero at $14,24$.  Its kernel is supported on
$\{14,24\}$, where its unique nonidentity structure map is the identity.
This kernel is $P_{14}^{(0)}$, giving the second resolution.

Similarly, $P_{23}^{(0)}$ is supported on $\{23,24\}$.  Its quotient
supported only at $23$ is $T_{23}^{(1)}$, and the kernel is
$P_{24}^{(0)}$.  Finally, the dimension vectors and structure maps show
directly that
\[
 T_{14}^{(1)}=P_{12}^{(0)}
 \quad\text{and}\quad
 T_{24}^{(1)}=P_{23}^{(0)}.
\]
Each displayed differential has image contained in the radical of its
target, so the three resolutions are minimal.
\end{proof}

\begin{corollary}
\label{cor:34-twisted-complexes}
Let $\mathbb L_{ij}^{(r)}$ be the objects supplied by Theorem 
\ref{thm:categorical-fukaya-staircase} for $r=0,1$.  Then
\[
\begin{aligned}
 \mathbb L_{12}^{(1)}
 &\cong
 \operatorname{Cone}\!
 \left(
  \mathbb L_{13}^{(0)}
  \longrightarrow
  \mathbb L_{12}^{(0)}
 \right),\\
 \mathbb L_{13}^{(1)}
 &\cong
 \operatorname{Cone}\!
 \left(
  \mathbb L_{14}^{(0)}
  \longrightarrow
  \mathbb L_{12}^{(0)}
 \right),\\
 \mathbb L_{23}^{(1)}
 &\cong
 \operatorname{Cone}\!
 \left(
  \mathbb L_{24}^{(0)}
  \longrightarrow
  \mathbb L_{23}^{(0)}
 \right),
\end{aligned}
\]
and
\[
 \mathbb L_{14}^{(1)}\cong\mathbb L_{12}^{(0)},
 \qquad
 \mathbb L_{24}^{(1)}\cong\mathbb L_{23}^{(0)}.
\]
Here $N=6$, and
\[
\begin{array}{c|ccccc}
v&12&13&14&23&24\\ \hline
\sigma_6(v)&56&46&36&45&35.
\end{array}
\]
Thus the initial incidence generators have the following presentations in
terms of the standard product Lagrangians:
\[
\begin{aligned}
 \mathbb L_{12}^{(0)}
 &\cong L_{36},&
 \mathbb L_{23}^{(0)}
 &\cong L_{35},\\
 \mathbb L_{13}^{(0)}
 &\cong
 \operatorname{Cone}(L_{36}\longrightarrow L_{56})[-1],&
 \mathbb L_{14}^{(0)}
 &\cong
 \operatorname{Cone}(L_{36}\longrightarrow L_{46})[-1],\\
 \mathbb L_{24}^{(0)}
 &\cong
 \operatorname{Cone}(L_{35}\longrightarrow L_{45})[-1].
\end{aligned}
\]
\end{corollary}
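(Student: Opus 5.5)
The plan is to push the three short exact sequences and two isomorphisms of Proposition~\ref{prop:34-Kan-resolutions} through the equivalence $\Phi_0$ of Theorem~\ref{thm:categorical-fukaya-staircase}, and then to invert the resulting triangles using the final-step identification with product Lagrangians. Here $m=2$, so there is a single tilting step $j=1$.

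\emph{Step 1: the first block of isomorphisms.} Part (4) of Theorem~\ref{thm:categorical-fukaya-staircase} gives $\mathbb L_v^{(1)}\cong\Phi_0(T_v^{(1)})$, and by definition $\mathbb L_v^{(0)}=\Phi_0(P_v^{(0)})$. A short exact sequence $0\to P'\xrightarrow{\iota} P\to T\to 0$ of $\C_0$-modules yields a distinguished triangle $P'\to P\to T\to P'[1]$ in $\Db{\C_0}$, so $T\cong\operatorname{Cone}(\iota)$. Applying the triangle functor $\Phi_0$ to the three resolutions gives the three cone descriptions. Applying $\Phi_0$ to $T_{14}^{(1)}\cong P_{12}^{(0)}$ and $T_{24}^{(1)}\cong P_{23}^{(0)}$ gives the two remaining isomorphisms.

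\emph{Step 2: the index computation.} For $(a,b)=(3,4)$ we have $s=1$, $m=2$ and $N=a+b-s=6$. Then $\sigma_6(z_1,z_2)=(7-z_2,7-z_1)$ produces the displayed table directly, for example $12\mapsto56$ and $14\mapsto36$. Since $m-1=1$, part (3) of Theorem~\ref{thm:categorical-fukaya-staircase} gives $\mathbb L_v^{(1)}\cong L_{\sigma_6(v)}$. Consequently
\[
\mathbb L_{12}^{(0)}\cong\mathbb L_{14}^{(1)}\cong L_{36},\qquad
\mathbb L_{23}^{(0)}\cong\mathbb L_{24}^{(1)}\cong L_{35}.
\]

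\emph{Step 3: inverting the triangles.} Rotating the triangle $\mathbb L_{13}^{(0)}\to\mathbb L_{12}^{(0)}\to\mathbb L_{12}^{(1)}\to\mathbb L_{13}^{(0)}[1]$ gives
\[
\mathbb L_{13}^{(0)}\cong\operatorname{Cone}\bigl(\mathbb L_{12}^{(0)}\to\mathbb L_{12}^{(1)}\bigr)[-1]\cong\operatorname{Cone}(L_{36}\to L_{56})[-1].
\]
The second triangle gives $\mathbb L_{14}^{(0)}$ in the same way, with target $\mathbb L_{13}^{(1)}\cong L_{46}$. The third gives $\mathbb L_{24}^{(0)}$, with $\mathbb L_{23}^{(0)}\cong L_{35}$ and $\mathbb L_{23}^{(1)}\cong L_{45}$.

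The main point to check is that these cones are well defined without naming the maps. The morphism in each cone is $\Phi_0$ of the quotient map $P\to T$, which is nonzero. By \eqref{eq:L-j-Hom} with $j=1$, or directly from Hom spaces in the higher Auslander corner, $\Hom(L_{36},L_{56})\cong\Hom_{\C_1}(12,14)$, and similarly for the other two pairs. Each of these spaces is one-dimensional, so the map is a nonzero scalar multiple of the basis morphism. Rescaling by a nonzero scalar does not change the cone up to isomorphism, so the cone is determined up to isomorphism. Shift conventions follow the two-term degree convention fixed in Example~\ref{ex:multiple-arrows}.
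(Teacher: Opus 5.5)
Your proposal is correct and follows the paper's proof: apply $\Phi_0$ to the three short exact sequences and the two isomorphisms of Proposition~\ref{prop:34-Kan-resolutions} via \eqref{eq:Kan-Fukaya-image}, then substitute $\mathbb L_v^{(1)}\cong L_{\sigma_6(v)}$ from \eqref{eq:final-product-lagrangian}, and finally rotate the triangles. Your added check, that each cone map is a nonzero scalar multiple of a basis morphism in a one-dimensional $\Hom$ space, is not in the paper's proof of this corollary. The paper makes that point separately in Proposition~\ref{prop:34-geometric-reduction}, and including it here makes the otherwise unnamed cone maps unambiguous.
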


\begin{proof}
Applying $\Phi_0$ to the three short exact sequences in
Proposition \ref{prop:34-Kan-resolutions} gives the three distinguished triangles in
the first display, because
\[
 \Phi_0(T_{ij}^{(1)})\cong\mathbb L_{ij}^{(1)}
\]
by \eqref{eq:Kan-Fukaya-image}.  The two projective Kan summands give the
two stated identifications.

By \eqref{eq:final-product-lagrangian}, $\mathbb L_v^{(1)}\cong L_{\sigma_6(v)}$.
The first two identifications therefore give
$\mathbb L_{12}^{(0)}\cong L_{36}$ and
$\mathbb L_{23}^{(0)}\cong L_{35}$.  Rotating the remaining three
distinguished triangles gives the three shifted-cone presentations of
$\mathbb L_{13}^{(0)}$, $\mathbb L_{14}^{(0)}$, and
$\mathbb L_{24}^{(0)}$.
\end{proof}

Recall that, in the present case,
\[
\mathcal K_{3,4}
=
\operatorname{thick}_{H^0(\mathcal W_6^{(2)})}
\{L_{56},L_{46},L_{36},L_{45},L_{35}\}.
\]
Thus $\mathcal K_{3,4}$ is the Fukaya-theoretic model of
$D^b(\Bbbk\Omega_{3,4})$ constructed in Section \ref{sec:fukaya}.

\begin{proposition}
\label{prop:34-geometric-reduction}
Up to nonzero scalar, each of the spaces
\[
 \Hom(L_{36},L_{56}),\qquad
 \Hom(L_{36},L_{46}),\qquad
 \Hom(L_{35},L_{45})
\]
is one-dimensional.  Choose nonzero morphisms
\[
 q_{13}\colon L_{36}\longrightarrow L_{56},\qquad
 q_{14}\colon L_{36}\longrightarrow L_{46},\qquad
 q_{24}\colon L_{35}\longrightarrow L_{45}.
\]
Suppose that exact graded embedded Lagrangian branes
\[
 K_{13},\qquad K_{14},\qquad K_{24}
\]
can be constructed together with quasi-isomorphisms
\[
\begin{aligned}
 K_{13}&\simeq\operatorname{Cone}(q_{13})[-1],\\
 K_{14}&\simeq\operatorname{Cone}(q_{14})[-1],\\
 K_{24}&\simeq\operatorname{Cone}(q_{24})[-1].
\end{aligned}
\]
Then
\begin{equation}
 L_{36},\quad K_{13},\quad K_{14},\quad
 L_{35},\quad K_{24}
\label{eq:34-geometric-collection}
\end{equation}
is a split-generating collection of $\mathcal K_{3,4}$, and the opposite of
the full $A_\infty$-subcategory on this collection is quasi-equivalent to the
incidence category
$\C_0(\Omega_{3,4})$.
\end{proposition}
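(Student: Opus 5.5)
The plan is to deduce everything from Corollary \ref{cor:34-twisted-complexes} and Theorem \ref{thm:categorical-fukaya-staircase}, using the hypothesised branes only as replacements for the twisted complexes already constructed there.

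\textbf{Step 1: one-dimensionality of the three Hom spaces.} By \eqref{eq:final-product-lagrangian}, $L_{\sigma_6(v)}\cong\mathbb L_v^{(1)}$, and \eqref{eq:L-j-Hom} with $j=1$ gives
\[
\Hom(\mathbb L_u^{(1)},\mathbb L_v^{(1)})=\Hom_{\C_1}(v,u).
\]
The pairs in question are $L_{36}=\mathbb L_{14}^{(1)}\to L_{56}=\mathbb L_{12}^{(1)}$, $L_{36}\to L_{46}=\mathbb L_{13}^{(1)}$, and $L_{35}=\mathbb L_{24}^{(1)}\to L_{45}=\mathbb L_{23}^{(1)}$. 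They correspond to $\Hom_{\C_1}(12,14)$, $\Hom_{\C_1}(13,14)$ and $\Hom_{\C_1}(23,24)$. Checking the interlacing inequalities, e.g.\ $1\le1<2\le4$, shows each is $\kk$.

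\textbf{Step 2: identifying the chosen morphisms.} I would show that, up to the nonzero scalars, the maps $q_{13},q_{14},q_{24}$ are the images under $\Phi_0$ of the injections
\[
P_{13}^{(0)}\to P_{12}^{(0)},\quad P_{14}^{(0)}\to P_{12}^{(0)},\quad P_{24}^{(0)}\to P_{23}^{(0)}
\]
from Proposition \ref{prop:34-Kan-resolutions}. These are nonzero and lie in one-dimensional spaces, so in the derived category $\operatorname{Cone}(q)[-1]$ is the rotation of the triangle in Corollary \ref{cor:34-twisted-complexes}. Rescaling a morphism does not change the isomorphism class of its cone.

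The subtle point is the direction. In the corollary the cones are of maps $\mathbb L^{(0)}_{13}\to\mathbb L^{(0)}_{12}$, and these rotate into maps $L_{36}\to L_{56}$ of the form $\mathbb L_{12}^{(0)}\to\mathbb L_{12}^{(1)}$, i.e.\ the quotient $P_{12}^{(0)}\to T_{12}^{(1)}$ transported by $\Phi_0$. I would verify that this quotient map corresponds to the nonzero element of the one-dimensional space found in Step 1. Then
\[
\operatorname{Cone}(q_{13})[-1]\cong\mathbb L_{13}^{(0)},
\]
and likewise $K_{14}\simeq\mathbb L_{14}^{(0)}$ and $K_{24}\simeq\mathbb L_{24}^{(0)}$. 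Together with $L_{36}\cong\mathbb L_{12}^{(0)}$ and $L_{35}\cong\mathbb L_{23}^{(0)}$, the collection \eqref{eq:34-geometric-collection} is isomorphic, objectwise in $\mathcal K_{3,4}$, to $\mathbb L^{(0)}$.

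\textbf{Step 3: conclusion.} Split generation then follows from Theorem \ref{thm:categorical-fukaya-staircase}(1) with $j=0$. Formality and the quasi-equivalence with $\C_0(\Omega_{3,4})^{\op}$ follow from part (2): quasi-isomorphic objects yield quasi-equivalent full $A_\infty$-subcategories, and that argument applies to any representatives with cohomology concentrated in degree zero. Taking opposites gives $\C_0$.

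The main obstacle is Step 2: showing that the chosen nonzero morphisms coincide, up to scalar, with the connecting maps of the transported short exact sequences. I expect to settle this purely by one-dimensionality of the relevant $\Hom$ spaces in $\mathcal K_{3,4}$.
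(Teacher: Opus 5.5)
Your proposal is correct and follows the paper's proof. Step~1 is exactly its use of \eqref{eq:L-j-Hom} with $j=1$, and Steps~2--3 make explicit, via one-dimensionality and the invariance of cones under rescaling, the identification of the collection with $\mathbb L^{(0)}$ that the paper reads off from Corollary~\ref{cor:34-twisted-complexes} before applying Theorem~\ref{thm:categorical-fukaya-staircase} with $j=0$; the only slip is the opening sentence of Step~2, which calls the $q$'s images of the injections, whereas (as you correct immediately afterwards) they are, up to scalar, the transported quotient maps $P_{12}^{(0)}\to T_{12}^{(1)}$, $P_{12}^{(0)}\to T_{13}^{(1)}$ and $P_{23}^{(0)}\to T_{23}^{(1)}$.
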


\begin{proof}
By \eqref{eq:L-j-Hom} for $j=1$, the three displayed morphism spaces are
identified, respectively, with
\[
 \Hom_{\C_1}(12,14),\qquad
 \Hom_{\C_1}(13,14),\qquad
 \Hom_{\C_1}(23,24),
\]
and hence are one-dimensional.  By
Corollary \ref{cor:34-twisted-complexes}, the five objects in
\eqref{eq:34-geometric-collection} are quasi-isomorphic to
\[
 \mathbb L_{12}^{(0)},\quad
 \mathbb L_{13}^{(0)},\quad
 \mathbb L_{14}^{(0)},\quad
 \mathbb L_{23}^{(0)},\quad
 \mathbb L_{24}^{(0)},
\]
in this order.  The conclusion now follows from
Theorem \ref{thm:categorical-fukaya-staircase} with $j=0$.
\end{proof}

\begin{remark}
\label{rem:34-surgery-checks}
\Cref{prop:34-geometric-reduction} shows that no further categorical
calculation is needed in the $(3,4)$ case.  To promote it to an embedded
geometric construction, it is enough to verify the following three local
facts for each of $q_{13},q_{14},q_{24}$:
\begin{enumerate}
\item the generator can be represented by a degree-zero transverse
intersection after an admissible wrapping perturbation;
\item the corresponding graded Lagrangian surgery is exact, avoids the
stop and the large diagonal of $\operatorname{Sym}^2(D)$, and represents
the appropriate mapping cone;
\item the grading of the surgery agrees with the shift $[-1]$ in
Corollary \ref{cor:34-twisted-complexes}.
\end{enumerate}
These are genuine geometric assertions.  They do not follow merely from
the categorical equivalence and are therefore kept separate from the
proved statements above.
\end{remark}

\begin{proposition}
\label{prop:34-full-check}
The preceding list contains all vertices of $\Omega_{3,4}$, and the
presentation \eqref{eq:34-algebra} gives the entire multiplication table of
$\C_1(\Omega_{3,4})$.
\end{proposition}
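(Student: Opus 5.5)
We check the two claims directly from the definitions, with no homological input.

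\emph{Vertex list.} By \eqref{eq:Omega-ab}, the vertices of $\Omega_{3,4}$ are the pairs $1\le z_1<z_2$ with $z_1\le H_1=2$ and $z_2\le H_2=4$. We enumerate them: for $z_1=1$ we get $z_2\in\{2,3,4\}$, and for $z_1=2$ we get $z_2\in\{3,4\}$. This gives exactly the five vertices $12,13,14,23,24$ of the Hasse quiver. As a cross-check, applying $\beta$ to \eqref{eq:34-below-paths} yields the same five pairs, so no vertex is missing.

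\emph{Hom spaces of $\C_1$.} Next we tabulate all nonzero spaces $\Hom_{\C_1}(x,y)$ from \eqref{eq:Cj-hom} with $j=1$, i.e. those with $x_1\le y_1<x_2\le y_2$. The pairs with $x\le y$ coordinatewise are:
\begin{itemize}
\item the five identities;
\item the arrows $12\to13$, $13\to14$, $13\to23$, $14\to24$, $23\to24$;
\item the longer pairs $12\to14$, $12\to23$, $12\to24$, $13\to24$.
\end{itemize}
Testing $y_1<x_2$ on the longer pairs:
\begin{itemize}
\item $12\to14$ holds, since $1<2$;
\item $12\to23$ fails, since $2\not<2$;
\item $12\to24$ fails, since $2\not<2$;
\item $13\to24$ holds, since $2<3$.
\end{itemize}
So $\Alg(\C_1)$ has basis: five idempotents, five arrows, $f_{14,12}$ and $f_{24,13}$, which is twelve elements in total.

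\emph{Matching the presentation.} In the quotient \eqref{eq:34-algebra}, every path of length at least two is one of $\beta\alpha$, $\gamma\alpha$, $\delta\beta$, $\varepsilon\gamma$, or a length-three path $\delta\beta\alpha$, $\varepsilon\gamma\alpha$. We compare each with the table:
\begin{itemize}
\item $\beta\alpha$ corresponds to $f_{14,12}$, which is nonzero by \eqref{eq:Cj-composition};
\item $\gamma\alpha=0$, matching $\Hom(12,23)=0$;
\item $\delta\beta=\varepsilon\gamma=f_{24,13}$, matching the nonzero space $\Hom(13,24)$;
\item both length-three paths vanish, as observed after \eqref{eq:34-diamond}, matching $\Hom(12,24)=0$.
\end{itemize}
Hence the quotient is spanned by the same twelve elements.

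The map sending each path to its basis morphism $f_{yx}$ is a well-defined surjective algebra map: the relations hold in $\C_1$, and the composition rule \eqref{eq:Cj-composition} says that the product of basis morphisms is the basis morphism of the outer pair or zero. The spanning count gives $\dim\le12$ for the quotient, so this surjection is an isomorphism. Consequently every product of basis elements of $\C_1$ is determined by \eqref{eq:34-algebra}, which is the multiplication-table claim.

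The only point needing care is completeness of the Hom table, in particular not overlooking a pair such as $12\to24$. The coordinatewise enumeration above settles this.
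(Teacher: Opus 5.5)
Your proposal is correct and follows the paper's proof. Both arguments enumerate the five vertices, list the seven nonidentity morphisms with $x_1\leq y_1<x_2\leq y_2$, and match the paths of $Q$ against them, so that $\gamma\alpha=0$ and $\delta\beta=\varepsilon\gamma$ are forced by the Hom spaces. You should also state that the five arrows themselves pass the test $y_1<x_2$, which they do. Your explicit surjection $\kk Q/\langle\gamma\alpha,\delta\beta-\varepsilon\gamma\rangle\to\operatorname{Alg}(\C_1(\Omega_{3,4}))$, together with the spanning bound $\dim\leq 12$, just makes precise the paper's closing remark that no further relation or basis morphism occurs.
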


\begin{proof}
The first coordinate is at most $2$.  If it is $1$, the second
coordinate can be $2,3,4$, while if it is $2$, the second coordinate can
be $3,4$.  This gives exactly the five listed vertices.

For $x=(x_1,x_2)$ and $y=(y_1,y_2)$, a morphism in $\C_1$ exists
precisely when $x_1\leq y_1<x_2\leq y_2$.
Besides the five identity morphisms, the nonidentity morphisms are
\[
\begin{gathered}
12\to13,\quad 12\to14,\quad
13\to14,\quad13\to23,\quad13\to24,\\
14\to24,\quad23\to24.
\end{gathered}
\]
Hence $\dim_{\kk}\C_1=5+7=12$.  Every nonidentity morphism is
represented by a path in the displayed quiver.  The two paths from $13$
to $24$ must agree because this Hom space is one-dimensional, giving
$\delta\beta=\varepsilon\gamma$.  The arrows
$\alpha\colon12\to13$ and $\gamma\colon13\to23$ are composable, but
$\Hom_{\C_1}(12,23)=0$, because the required strict inequality is
$2<2$.  Thus $\gamma\alpha=0$.

All remaining paths either represent one of the seven listed morphisms or
contain $\gamma\alpha$.  In particular,
$\delta\beta\alpha=\varepsilon\gamma\alpha=0$.  Therefore no additional
independent relation or basis morphism occurs, proving that
\eqref{eq:34-algebra} is a complete presentation.
\end{proof}

\subsection{The \texorpdfstring{$(4,5)$}{(4,5)} staircase}

Let $(a,b)=(4,5)$.  Then $s=1$, $m=3$, and
\[
 (H_1,H_2,H_3)=(2,4,6).
\]
Hence $\Omega_{4,5}=
 \{1\leq z_1<z_2<z_3:
 z_1\leq2,\ z_2\leq4,\ z_3\leq6\}$.
It has fourteen vertices:
\begin{align*}
 &123,124,125,126,\quad
 134,135,136,\quad
 145,146,\\
 &234,235,236,\quad
 245,246.
\end{align*}
There are two tilting steps:
\[
 \C_0(\Omega_{4,5})
 \simeq_{\mathrm{der}}
 \C_1(\Omega_{4,5})
 \simeq_{\mathrm{der}}
 \C_2(\Omega_{4,5}),
\]
and $\operatorname{Alg}(\C_2(\Omega_{4,5}))\cong B_0$.
Their Hom conditions are
\begin{align*}
 \C_0:\quad&
 x_i\leq y_i\quad(i=1,2,3),\\
 \C_1:\quad&
 x_i\leq y_i\quad(i=1,2,3),\qquad y_1<x_2,\\
 \C_2:\quad&
 x_i\leq y_i\quad(i=1,2,3),\qquad
 y_1<x_2,\quad y_2<x_3.
\end{align*}
For example, the arrows $123\lra 124\lra 134$
are nonzero in $\C_2$, but their composite is zero because the final pair
$123\to134$ violates
\[
 y_2=3<x_3=3.
\]
This is a genuine higher half-square zero relation.  The first relative
tilting step inserts the inequality $y_1<x_2$; the second inserts
$y_2<x_3$.

\begin{proposition}
\label{prop:45-full-check}
The set $\Omega_{4,5}$ has the fourteen vertices listed above.  The
dimensions of its three staircase categories are
\[
 \dim_{\kk}\C_0=84,\qquad
 \dim_{\kk}\C_1=68,\qquad
 \dim_{\kk}\C_2=55.
\]
Moreover, the multiplication in $\C_2$ is completely determined by the
outer-pair test: for nonzero basis morphisms $x\to y$ and $y\to z$,
their product is nonzero if and only if $x\preceq z$.
\end{proposition}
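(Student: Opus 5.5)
The vertex count follows from Proposition~\ref{prop:dyck-staircase}. With $(a,b)=(4,5)$ one has $s=1$, $m=3$ and $(H_1,H_2,H_3)=(2,4,6)$. I would split according to $z_1\in\{1,2\}$ and then according to $z_2$, running over $z_2\le 4$, and count the admissible values of $z_3\le 6$ in each case. For $z_1=1$ the choices $z_2=2,3,4$ give $4+3+2$ triples. For $z_1=2$ the choices $z_2=3,4$ give $3+2$ triples. This yields the fourteen listed vertices, exactly as in the proof of Proposition~\ref{prop:34-full-check}.

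The multiplication statement needs no computation. On $\Omega_{4,5}$ the Hom condition \eqref{eq:Cj-hom} for $j=m-1=2$ is exactly $x\preceq y$ from \eqref{eq:interlace}. The composition rule \eqref{eq:Cj-composition} sets $f_{zy}f_{yx}=f_{zx}$ precisely when $\Hom_{\C_2}(x,z)\neq0$, that is, when $x\preceq z$, and sets it to zero otherwise. So the outer-pair test is the definition itself, and associativity was already checked after Definition~\ref{def:Cj}. The example $123\to124\to134$ can be cited as a sanity check of this rule.

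The dimension counts are where the real work lies. Since every Hom space is $0$ or $\kk$, the dimension of $\C_j$ equals the number of ordered pairs $(x,y)$ with $x,y\in\Omega_{4,5}$ satisfying the $\C_j$ conditions. I would organise the count by the target $y$. For each of the fourteen $y$, list the $x\in\Omega_{4,5}$ with $x\le y$ coordinatewise. This is easy because $\Omega_{4,5}$ is a coordinate lower ideal, so these are just the increasing triples bounded by $y$. The sum of the sizes of these lists should be $84$.

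Among those pairs, I would then mark the ones violating $y_1<x_2$. Removing them should leave $68$, which gives the count for $\C_1$. Next, among the surviving pairs, I would mark the ones violating $y_2<x_3$. Removing these should leave $55$, the count for $\C_2$. Handling the two strict inequalities as successive deletions mirrors the one-step structure of Proposition~\ref{prop:one-step}, and it makes the figures $84-68=16$ and $68-55=13$ independently checkable. I would present the counts as a $14$-row table indexed by $y$ with three columns.

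The main obstacle is purely bookkeeping: not miscounting the comparable pairs for $\C_0$. As a cross-check I would recount $\C_0$ by source, counting the $y\ge x$ for each $x$, and confirm that the two totals agree.
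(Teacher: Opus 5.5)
Your proposal is correct and follows essentially the paper's proof: the same split on $z_1,z_2$ for the fourteen vertices, the observation that the outer-pair rule is just the definition of $\C_2$, and a direct enumeration of ordered pairs. The only difference is bookkeeping: the paper tabulates allowed targets per source, whereas you organise by target and treat the two strict inequalities as successive deletions; carrying this out does give $16$ pairs removed by $y_1<x_2$ and $13$ more by $y_2<x_3$, hence $84$, $68$, $55$.
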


\begin{proof}
If $z_1=1$, then $z_2\in\{2,3,4\}$.  The corresponding numbers of
choices for $z_3\leq6$ are $4,3,2$, respectively.  If $z_1=2$, then
$z_2\in\{3,4\}$, giving $3$ and $2$ choices.  Hence
\[
 |\Omega_{4,5}|=4+3+2+3+2=14,
\]
and writing out these choices gives exactly the displayed list.

For $j=0,1,2$, every nonzero Hom space is one-dimensional.  Therefore
\[
 \dim_{\kk}\C_j
 =
 \sum_{x,y\in\Omega_{4,5}}
 \mathbf 1\!\left[
 x_i\leq y_i\ (1\leq i\leq3),\
 y_i<x_{i+1}\ (1\leq i\leq j)
 \right].
\]
Sort the possible source vertices by their first coordinate and then their
second coordinate, in the order displayed above.  The numbers of allowed
targets for the fourteen sources are
\[
\begin{array}{c|rrrrrrrrrrrrrr}
j=0&14&13&10&5&10&8&4&4&2&5&4&2&2&1\\
j=1& 9& 8& 6&3&10&8&4&4&2&5&4&2&2&1\\
j=2& 4& 6& 6&3& 6&8&4&4&2&3&4&2&2&1.
\end{array}
\]
Summing the three rows gives, respectively,
\[
 84,\qquad68,\qquad55.
\]
This count is exhaustive because it runs over all ordered pairs of the
fourteen vertices.

Finally, the definition of $\C_2$, equivalently the corner multiplication
\eqref{eq:higher-composition}, declares the composite of the two basis
morphisms to be the basis morphism $x\to z$ exactly when the outer pair
satisfies
\[
 x_1\leq z_1<x_2\leq z_2<x_3\leq z_3.
\]
If an inequality fails, $\Hom_{\C_2}(x,z)=0$, so the composite must be
zero.  This proves the asserted complete multiplication rule.
\end{proof}

\subsection{Cartan and multiplication checks}

We record reproducible finite checks for the two examples.  Order the
vertices lexicographically and define the Cartan matrix of $\C_j(\Omega)$
by
\begin{equation}
 (C_j)_{x,y}
 =
 \dim_{\kk}\Hom_{\C_j(\Omega)}(x,y)
 =
 \begin{cases}
  1,&\begin{gathered}x_i\leq y_i\ (1\leq i\leq m),\\[-2pt]
      y_i<x_{i+1}\ (1\leq i\leq j),\end{gathered}\\
  0,&\text{otherwise}.
 \end{cases}
\label{eq:cartan-enumeration}
\end{equation}
These zero--one matrices are upper unitriangular, so their determinants are
one.  With the convention
\begin{equation}
 \Phi_j=-C_j^{-1}C_j^{\mathsf T},
\label{eq:coxeter-matrix}
\end{equation}
direct integer matrix calculation gives the following table.  In the
$(4,5)$ row the three algebra dimensions correspond to
$\C_0,\C_1,\C_2$, respectively.
\begin{equation}
\resizebox{0.98\textwidth}{!}{$
\begin{array}{c|c|c|c|c}
 (a,b)&|\Omega_{a,b}|&
 \dim_{\kk}\C_0,\ldots,\dim_{\kk}\C_{m-1}&
 \det C_0,\ldots,\det C_{m-1}&
 \det(tI-\Phi_j)\\ \hline
 (3,4)&5&14,\ 12&1,\ 1&(t+1)(t^4+1)\\
 (4,5)&14&84,\ 68,\ 55&1,\ 1,\ 1&p_{4,5}(t)
\end{array}$}
\label{eq:computed-invariants}
\end{equation}
Here the common Coxeter polynomial in the second row is
\begin{equation}
\begin{split}
 p_{4,5}(t)={}&t^{14}+t^{13}+t^{12}+t^{11}+t^{10}
 +2t^9+2t^8+2t^7\\
 &+2t^6+2t^5+t^4+t^3+t^2+t+1.
\end{split}
\label{eq:coxeter-45}
\end{equation}
Thus the Cartan determinants and Coxeter polynomials agree at every tilting
step, as required by derived equivalence.

The same enumeration checks the multiplication, not only the Hom
dimensions.  For every ordered triple $(x,y,z)$, retain it when
$\Hom(x,y)$ and $\Hom(y,z)$ are nonzero, and declare the product to be
the basis element of $\Hom(x,z)$ exactly when the inequalities in
\eqref{eq:cartan-enumeration} also hold for $(x,z)$.  For $(3,4)$ there
are $25$ such composable pairs of basis morphisms, of which $3$ have
zero product; for $(4,5)$ there are $188$, of which $48$ have zero
product.  The resulting multiplication tables have dimensions $12$ and
$55$, respectively, and agree entry by entry with the corner
multiplication rule \eqref{eq:higher-composition}.  In particular, the
relations $\gamma\alpha=0$ in \eqref{eq:34-algebra} and
$123\to124\to134=0$ are instances of the complete, rather than a sampled,
check.

\section{Further staircase consequences}
\label{sec:staircase-consequences}

\subsection{Arbitrary staircase truncations}

\begin{theorem}
\label{thm:arbitrary-staircase-application}
Let $H=(H_1,\ldots,H_m)$ be such that the coordinate staircase
$\Omega(H)$ is finite and nonempty.  Put $N=\max_i H_i$, let
$A=A_{N-m+1}^{m}$, and let
$e_\Omega=\sum_{z\in\Omega(H)}e_z$.  Then
\begin{equation}
 \Db{\kk\Omega(H)}
 \simeq\Db{e_{\Omega}Ae_{\Omega}}.
\label{eq:all-staircases}
\end{equation}
The equivalence is induced by the composite of the $m-1$ explicit tilting
objects occurring in Theorem \ref{thm:staircase}.
\end{theorem}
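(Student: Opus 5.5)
The plan is to combine Theorem~\ref{thm:staircase} with Proposition~\ref{prop:corner}. The only new point is to check that the specific choice $N=\max_i H_i$ and $A=A_{N-m+1}^{m}$ satisfies the hypotheses of Proposition~\ref{prop:corner}.

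First I will verify that $\Omega(H)\subseteq\os_m(N)$. Every $z\in\Omega(H)$ satisfies $1\le z_1<\cdots<z_m$ and $z_i\le H_i\le N$. Hence $z$ is a strictly increasing $m$-tuple in $[N]$. Nonemptiness forces $m\le N$, so $\os_m(N)$ is a genuine vertex set. By Xing's convention, $A_{N-m+1}^{m}$ has vertex set $\os_m((N-m+1)+m-1)=\os_m(N)$, with morphism spaces \eqref{eq:higher-hom} and composition \eqref{eq:higher-composition}. Thus $A$ is exactly the category algebra used in Proposition~\ref{prop:corner}, and $e_\Omega$ is the idempotent appearing there.

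Next I will invoke Theorem~\ref{thm:staircase}, which gives $\Db{\kk\Omega(H)}\simeq\Db{\C_{m-1}(\Omega(H))}$. This is realized as the composite of the $m-1$ tilting equivalences of Proposition~\ref{prop:one-step}, each induced by the Kan-extension tilting object of Theorem~\ref{thm:relative} applied to $(\X_j,Y_j,F_j)$. Proposition~\ref{prop:corner} then gives the algebra isomorphism $\Alg(\C_{m-1}(\Omega(H)))\cong e_\Omega Ae_\Omega$. The key input there is that $\Omega(H)$ is a coordinatewise lower ideal of $\os_m(N)$, so the corner acquires no extra paths. Composing these yields \eqref{eq:all-staircases}.

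For the final sentence I will record that derived equivalences induced by tilting complexes compose. By Rickard's theorem, each step is isomorphic to $\RHom(T^{(j)},-)$. Their composite is a standard derived equivalence, whose associated tilting complex over $\kk\Omega(H)$ is the image of $\Alg(\C_{m-1})$ under the inverse composite. The corner isomorphism is only a relabeling. The main, though mild, obstacle is the degenerate case $m=1$. There the chain is empty, $\C_0=\kk\Omega(H)$ is a chain incidence algebra, and $A_N^1$ is the path algebra of linearly oriented $A_N$; Proposition~\ref{prop:corner} still applies verbatim, and the equivalence is an isomorphism.
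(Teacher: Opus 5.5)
Your proposal is correct and follows essentially the same route as the paper. Theorem~\ref{thm:staircase} reduces the problem to $\C_{m-1}(\Omega(H))$, whose Hom condition and outer-pair multiplication are exactly those of the higher Auslander category. This gives $\operatorname{Alg}(\C_{m-1}(\Omega(H)))\cong e_\Omega Ae_\Omega$, which is the content of Proposition~\ref{prop:corner}.

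Your explicit check that $A_{N-m+1}^m$ has vertex set $\os_m(N)\supseteq\Omega(H)$, which the paper leaves implicit, is a welcome addition. The lower-ideal remark is harmless but not actually needed: a corner of a category algebra is automatically the category algebra of the corresponding full subcategory.
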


\begin{proof}
By Theorem \ref{thm:staircase}, successively splitting after coordinates
$1,\ldots,m-1$ gives $\Db{\kk\Omega(H)}
 \simeq\Db{\C_{m-1}(\Omega(H))}$.
For $x,y\in\Omega(H)$, the defining Hom condition in the last category is
\[
 x_1\leq y_1<x_2\leq y_2<\cdots<x_m\leq y_m,
\]
which is exactly the interlacing condition defining $A$.  The
multiplication rules also agree: a composite is the outer basis morphism
when the outer pair interlaces and is zero otherwise.  Thus
$\operatorname{Alg}(\C_{m-1}(\Omega(H)))\cong e_\Omega Ae_\Omega$, proving
\eqref{eq:all-staircases}.  At the $j$-th step the equivalence is induced
by the Kan-extension tilting object constructed in Section \ref{sec:relative}, so
their composite is explicit.
\end{proof}

\begin{remark}
The left side of \eqref{eq:all-staircases} is an incidence algebra, whereas
the right side has the commutative-square and half-square zero relations of
a higher Auslander category.  Thus rational Dyck staircases form a
distinguished subfamily of a general staircase construction.
\end{remark}

\begin{theorem}
\label{thm:replicated-staircase}
Let $H=(H_1,\ldots,H_m)$ be such that $\Omega(H)$ is finite and
nonempty.  Retain the notation
\[
 A=A_{N-m+1}^{m},
 \qquad
 B_\Omega=e_\Omega Ae_\Omega
\]
of Theorem \ref{thm:arbitrary-staircase-application}.  Then, for every integer
$r\geq1$, there is a triangle equivalence
\begin{equation}
 \Db{\kk\ovr A_r\otimes_{\kk}\kk\Omega(H)}
 \simeq
 \Db{B_\Omega^{(r)}}.
\label{eq:replicated-staircase}
\end{equation}
The equivalence on the left is induced by tensoring the explicit composite
staircase tilting complex with $\kk\ovr A_r$; the second step is
Ladkani's replicated-algebra equivalence.
\end{theorem}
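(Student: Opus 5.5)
The plan is to follow the chain used in the proof of Theorem \ref{thm:CLR-conjecture}, with the staircase corner $B_\Omega$ in place of $B_0$. There are two steps. First, transport the staircase tilting equivalence along the tensor product with $\kk\ovr A_r$. Second, replace the tensor product by the replicated algebra using Ladkani's theorem.

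\emph{Step 1.} Theorem \ref{thm:arbitrary-staircase-application} gives a derived equivalence between $\kk\Omega(H)$ and $B_\Omega=e_\Omega Ae_\Omega$. It is the composite of the $m-1$ Kan-extension tilting equivalences of Theorem \ref{thm:staircase}. Lemma \ref{lem:tensor-tilting} needs a single tilting complex, so I would not try to compose the tilting objects directly. Instead I apply Lemma \ref{lem:tensor-tilting} once at each step. At step $j$, with $C=\kk\ovr A_r$, $A=\operatorname{Alg}(\C_{j-1}(\Omega(H)))$ and $T=T^{(j)}$, it gives
\[
 \Db{\kk\ovr A_r\otimes\operatorname{Alg}(\C_{j-1})}\simeq\Db{\kk\ovr A_r\otimes\operatorname{Alg}(\C_j)}.
\]
Composing these equivalences for $j=1,\dots,m-1$ gives
\[
 \Db{\kk\ovr A_r\otimes\kk\Omega(H)}\simeq\Db{\kk\ovr A_r\otimes B_\Omega}.
\]
This step needs two checks. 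The first is the opposite-algebra convention: Proposition \ref{prop:end-relative} gives $\End(T^{(j)})^{\op}\cong\operatorname{Alg}(\C_j)$, which is exactly the hypothesis of Lemma \ref{lem:tensor-tilting}. The second is that $T^{(j)}$ is perfect, which holds by Proposition \ref{prop:T-generates}. One could also obtain a single tilting complex from the composite equivalence, since the composite of standard derived equivalences is standard. The stepwise version is cleaner.

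\emph{Step 2.} Apply Ladkani's equivalence \eqref{eq:Ladkani-replicated} to $\Lambda=B_\Omega$, then use the factor swap $\kk\ovr A_r\otimes B_\Omega\cong B_\Omega\otimes\kk\ovr A_r$. Ladkani's theorem requires $B_\Omega$ to be Gorenstein. I would get this from finite global dimension: $\kk\Omega(H)$ is the incidence algebra of a finite poset, so it is directed and has finite global dimension by Lemma \ref{lem:homologically-directed-global-dimension}. Finite global dimension is invariant under derived equivalence of finite-dimensional algebras, so $\gldim B_\Omega<\infty$. Alternatively, $\C_{m-1}(\Omega(H))$ is itself a directed Schur category, so Lemma \ref{lem:homologically-directed-global-dimension} applies to it directly. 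An algebra of finite global dimension is Gorenstein.

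The main obstacle is a convention check rather than a conceptual one. The replicated algebra in \eqref{eq:replicated-matrix} is Xing's $r$-block convention, while Ladkani counts $r-1$ replications. I need to confirm that \eqref{eq:Ladkani-replicated}, as stated, pairs $\kk\ovr A_r$ with the $r$-block algebra, including the placement of $D\Lambda$ above the diagonal under the left-module convention. If the sides turn out to be mismatched, I would fix it using Lemma \ref{lem:replicated-op} together with the fact that derived equivalences pass to opposite algebras, as in \eqref{eq:chain2}. The rest of the argument is formal.
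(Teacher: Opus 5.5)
Your proposal is correct and follows the paper's proof: Lemma~\ref{lem:tensor-tilting} with $C=\kk\ovr A_r$ transports the staircase equivalence to the tensor products, and Ladkani's equivalence \eqref{eq:Ladkani-replicated} then applies to $B_\Omega$, which has finite global dimension and is therefore Gorenstein. The only difference is cosmetic. The paper tensors a single composite tilting complex $T_\Omega$ with $\End(T_\Omega)^{\op}\cong B_\Omega$, whereas you tensor each Kan-extension step $T^{(j)}$ separately and compose, which is equally valid; your convention worry is already settled by \eqref{eq:Ladkani-replicated}, which the paper states for the $r$-block algebra of \eqref{eq:replicated-matrix}.
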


\begin{proof}
Let $T_\Omega$ be the composite tilting complex which induces
\eqref{eq:all-staircases}.  Thus $\End_{\Db{\kk\Omega(H)}}(T_\Omega)^{\op}
 \cong B_\Omega$.
Apply Lemma \ref{lem:tensor-tilting} with
$C=\kk\ovr A_r$.  It follows that
$\kk\ovr A_r\otimes T_\Omega$ is a tilting complex and that $\Db{\kk\ovr A_r\otimes\kk\Omega(H)}
 \simeq
 \Db{\kk\ovr A_r\otimes B_\Omega}$.

It remains to justify that Ladkani's theorem applies to $B_\Omega$.
The incidence category $\kk\Omega(H)$ is a finite directed Schur
category and therefore has finite global dimension.  Finiteness of global
dimension is invariant under derived equivalence for finite-dimensional
algebras.  By \eqref{eq:all-staircases}, $B_\Omega$ consequently has
finite global dimension.  In particular, it is Gorenstein.
Ladkani's equivalence \eqref{eq:Ladkani-replicated} now gives
\[
 \Db{\kk\ovr A_r\otimes B_\Omega}
 \simeq \Db{B_\Omega^{(r)}}.
\]
Composing the two equivalences proves
\eqref{eq:replicated-staircase}.
\end{proof}

\subsection{Non-coprime rational Dyck paths}

\begin{proposition}
\label{prop:noncoprime-application}
Let $a,b\geq2$.  With
\[
 s=\left\lceil\frac ab\right\rceil,\qquad m=a-s,
\]
and $\Omega_{a,b}$ as in Proposition \ref{prop:dyck-staircase}, there is a triangle
equivalence
\begin{equation}
 \Db{\kk\Dyck^{\mathrm{below}}_{a,b}}
 \simeq
 \Db{e_{\Omega_{a,b}}A_{b+1}^{m}e_{\Omega_{a,b}}}.
\label{eq:noncoprime-corner}
\end{equation}
No coprimality assumption is needed.
\end{proposition}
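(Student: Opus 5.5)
The plan is to assemble the equivalence from three earlier results, with one index check in the middle. First, Proposition \ref{prop:dyck-staircase} gives the order isomorphism
\[
\beta\colon\Dyck^{\mathrm{below}}_{a,b}\xrightarrow{\sim}\Omega_{a,b}
\]
for all $a,b\geq2$, and it uses only Lemma \ref{lem:forced-prefix}; neither result needs coprimality. Consequently $\kk\Dyck^{\mathrm{below}}_{a,b}\cong\kk\Omega_{a,b}$ as incidence categories. Second, $\Omega_{a,b}=\Omega(H^{a,b})$ is a finite nonempty coordinate staircase. It is nonempty because it contains $(1,2,\ldots,m)$, since $H^{a,b}_j\geq j$. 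Theorem \ref{thm:staircase} and Proposition \ref{prop:corner} therefore give
\[
\Db{\kk\Omega_{a,b}}\simeq\Db{e_{\Omega}Ae_{\Omega}},
\]
where $A$ is the higher Auslander algebra on any $\os_m(N)$ containing $\Omega_{a,b}$.

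The only real point is to choose $N$ so that $A$ is Xing's $A_{b+1}^{m}$. That algebra has vertex set $\os_m(b+m)=\os_m(a+b-s)$. I will check that $\Omega_{a,b}\subseteq\os_m(a+b-s)$, which amounts to bounding the largest entry:
\[
z_m\leq H^{a,b}_m=m+\left\lfloor\frac{b(a-1)}{a}\right\rfloor\leq m+b-1<b+m.
\]
This uses $s+m-1=a-1$ and $b(a-1)/a<b$. Every $z\in\Omega_{a,b}$ is therefore a strictly increasing $m$-tuple in $[b+m]$, and we may take $N=b+m$.

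Proposition \ref{prop:corner} only requires $\Omega(H)\subseteq\os_m(N)$, because the corner cannot pick up paths through vertices outside $\Omega$. With this $N$ it identifies $\operatorname{Alg}(\C_{m-1}(\Omega_{a,b}))$ with $e_{\Omega_{a,b}}A_{b+1}^{m}e_{\Omega_{a,b}}$. Equivalently, this is Proposition \ref{prop:B0-C}, whose proof never used $\gcd(a,b)=1$.

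There is one degenerate case. If $m=1$, which happens exactly when $s=a-1$, the staircase theorem has no tilting steps. Then $\C_0=\kk\Omega_{a,b}$ is a chain and $A_{b+1}^{1}$ is the path algebra of type $A_{b+1}$, so the claim reduces to the corner identification alone. Composing the pieces gives \eqref{eq:noncoprime-corner}. The main obstacle is only the bookkeeping that $N=b+m$ matches Xing's indexing; no part of the argument invokes Xing's cyclic-orbit construction, which is the sole source of the coprimality hypothesis elsewhere.
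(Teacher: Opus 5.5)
Your proposal is correct and follows essentially the same route as the paper: the bijection $\beta$ identifies the Dyck poset with $\Omega_{a,b}$, and the iterated staircase tilting plus the corner identification finish the argument, with no step using coprimality. Your explicit choice of $N=b+m=a+b-s$ in Proposition \ref{prop:corner}, together with the check $H_m\le m+b-1$, is in fact more careful than the paper's appeal to Theorem \ref{thm:arbitrary-staircase-application} ``with $\max H_j=a+b-s$''. That theorem fixes $N=\max_i H_i$, whereas the true maximum is $H_m\le a+b-s-1$; the discrepancy is harmless only because the corner $e_\Omega Ae_\Omega$ does not depend on $N$, which is exactly the fact your argument uses.
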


\begin{proof}
The path-to-sequence bijection of Section \ref{sec:dyck} uses only the elementary
height inequalities for a path below the line of slope $b/a$.  Deleting
the first $s$ forced coordinates gives the bounds
\[
 z_j\leq
 j+\left\lfloor\frac{b(s+j-1)}a\right\rfloor
 \qquad(1\leq j\leq m)
\]
regardless of $\gcd(a,b)$.  Hence the Dyck poset is the coordinate
staircase $\Omega_{a,b}$.  Applying
Theorem \ref{thm:arbitrary-staircase-application} with
$\max H_j=a+b-s$ identifies its incidence algebra with the indicated
corner of $A_{b+1}^{m}$.  Every step is independent of coprimality.
\end{proof}

\begin{remark}
\label{rem:degenerate-dyck}
If $a=1$ or $b=1$, there is exactly one rational Dyck path in either
the below-diagonal or the above-diagonal convention.  Consequently, $\kk\Dyck^{\mathrm{below}}_{a,b}
 \cong
 \kk\Dyck^{\mathrm{above}}_{a,b}
 \cong\kk$.
This statement is used directly in the degenerate cases.  We do not attach a
staircase corner to it, since the notation in the preceding results is
defined only for $m\geq1$.
\end{remark}

\begin{corollary}
\label{cor:replicated-rational-dyck}
Let $a,b\geq2$ and $r\geq1$, and put
\[
 B^{\mathrm{st}}_{a,b}
 =
 e_{\Omega_{a,b}}A_{b+1}^{m}e_{\Omega_{a,b}},
 \qquad
 m=a-\left\lceil\frac ab\right\rceil.
\]
Then
\begin{equation}
 \Db{\kk\ovr A_r\otimes
 \kk\Dyck^{\mathrm{below}}_{a,b}}
 \simeq
 \Db{B^{\mathrm{st}}_{a,b})^{(r)}}.
\label{eq:replicated-dyck-below}
\end{equation}
For the above-diagonal convention,
\begin{equation}
 \Db{\kk\ovr A_r\otimes
 \kk\Dyck^{\mathrm{above}}_{a,b}}
 \simeq
 \Db{(B^{\mathrm{st}}_{a,b})^{(r)}\bigr)^{\op}}.
\label{eq:replicated-dyck-above}
\end{equation}
Neither equivalence requires $\gcd(a,b)=1$.
\end{corollary}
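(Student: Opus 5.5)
The first equivalence \eqref{eq:replicated-dyck-below} is Theorem \ref{thm:replicated-staircase} specialized to the Dyck staircase. By Proposition \ref{prop:dyck-staircase}, the map $\beta$ is an order isomorphism $\Dyck^{\mathrm{below}}_{a,b}\cong\Omega_{a,b}$. Hence $\kk\Dyck^{\mathrm{below}}_{a,b}\cong\kk\Omega_{a,b}$, and tensoring with $\kk\ovr A_r$ gives an isomorphism of tensor algebras. The staircase $\Omega_{a,b}=\Omega(H^{a,b})$ with $H_j=H_j^{a,b}$ is finite and nonempty, since it contains $(1,\ldots,m)$ and $m\geq1$ when $a,b\geq2$. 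One index check is needed: $\max_j H_j^{a,b}=H_m^{a,b}=m+\lfloor b(a-1)/a\rfloor=a-s+b-1=a+b-s$. Then $N-m+1=b+1$, so the algebra $A$ of Theorem \ref{thm:arbitrary-staircase-application} is exactly $A_{b+1}^m$ and $B_\Omega=B^{\mathrm{st}}_{a,b}$. This is the same identification already used in Proposition \ref{prop:noncoprime-application}. Theorem \ref{thm:replicated-staircase} then gives \eqref{eq:replicated-dyck-below}. Neither step uses coprimality: the only external input is Ladkani's theorem, which applies because $B^{\mathrm{st}}_{a,b}$ has finite global dimension, inherited from $\kk\Omega_{a,b}$ by derived equivalence.

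For the above-diagonal statement \eqref{eq:replicated-dyck-above}, I would pass to opposites. Lemma \ref{lem:above-below} gives $\kk\Dyck^{\mathrm{above}}_{a,b}\cong(\kk\Dyck^{\mathrm{below}}_{a,b})^{\op}$. Since $\kk\ovr A_r\cong(\kk\ovr A_r)^{\op}$ (reverse the chain), we get
\[
\kk\ovr A_r\otimes\kk\Dyck^{\mathrm{above}}_{a,b}\cong\bigl(\kk\ovr A_r\otimes\kk\Dyck^{\mathrm{below}}_{a,b}\bigr)^{\op}.
\]
A derived equivalence of finite-dimensional algebras induces one between their opposites. This is the same remark used in the proof of Corollary \ref{cor:above-B0}: apply $D=\Hom_\kk(-,\kk)$, or note that the opposite of a tilting complex's endomorphism ring arises from the dual tilting complex. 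Applying it to \eqref{eq:replicated-dyck-below} yields \eqref{eq:replicated-dyck-above}. If a form with the opposite inside the replication is also wanted, Lemma \ref{lem:replicated-op} rewrites the right-hand side as $\Db{(B^{\mathrm{st},\op}_{a,b})^{(r)}}$.

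The main obstacle is bookkeeping, not new mathematics. Two points need care. First, the identification $\max H_j=a+b-s$ together with the index conversion to $A_{b+1}^m$; the computation $\lfloor b(a-1)/a\rfloor=b-1$ uses $b<a\cdot1$ only when... in fact it holds for all $b\geq1,a\geq2$, since $b-b/a\in[b-1,b)$. Second, the self-oppositeness of $\kk\ovr A_r$. I would verify both explicitly and otherwise cite the earlier results verbatim.
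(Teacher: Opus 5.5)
Your route is the paper's. For \eqref{eq:replicated-dyck-below} you use Proposition \ref{prop:dyck-staircase} and Theorem \ref{thm:replicated-staircase}. For the above-diagonal case you use Lemma \ref{lem:above-below}, $\kk\overrightarrow{A}_r\cong(\kk\overrightarrow{A}_r)^{\op}$ and passage to opposite algebras, with Lemma \ref{lem:replicated-op} needed only to move $\op$ inside the replication. The opposite-algebra half is correct as written.

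The index check you added to the first half is wrong, and it is the step your identification $B_\Omega=B^{\mathrm{st}}_{a,b}$ rests on. One has $\lfloor b(a-1)/a\rfloor=b-\lceil b/a\rceil$. This equals $b-1$ only when $b\leq a$; for $(a,b)=(4,5)$ it is $3$, since $b-b/a=3.75$. Even when $b\leq a$, $m+b-1=a+b-s-1$, not $a+b-s$. In fact, for all $a,b\geq2$,
\[
\max_j H_j^{a,b}=H_m^{a,b}=a+b-s-\lceil b/a\rceil<a+b-s .
\]
For instance $\max H=4$ for $(3,4)$ and $\max H=6$ for $(4,5)$, matching the lists in Section \ref{sec:examples}. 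So with $N=\max_j H_j$, as prescribed in Theorem \ref{thm:arbitrary-staircase-application}, the ambient algebra is $A^m_{N-m+1}$ with $N-m+1=b+1-\lceil b/a\rceil$, not $A^m_{b+1}$. Your claim that $B_{\Omega_{a,b}}$ is literally $B^{\mathrm{st}}_{a,b}$ therefore does not follow. The proof of Proposition \ref{prop:noncoprime-application} asserts the same equality, so citing it does not help.

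The repair is short: the corner does not depend on the ambient $\os_m(N)$. Hom spaces and composition in the higher Auslander category are given by the $N$-independent rules \eqref{eq:higher-hom} and \eqref{eq:higher-composition}. Hence for every $N$ with $\Omega\subseteq\os_m(N)$, the corner $e_\Omega A e_\Omega$ is $\operatorname{Alg}(\C_{m-1}(\Omega))$ (Proposition \ref{prop:corner}).

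Since $H_m^{a,b}\leq a+b-s$, you have $\Omega_{a,b}\subseteq\os_m(a+b-s)$, which is the vertex set of $A^m_{b+1}$. Proposition \ref{prop:B0-C} then gives $B^{\mathrm{st}}_{a,b}\cong\operatorname{Alg}(\C_{m-1}(\Omega_{a,b}))\cong B_{\Omega_{a,b}}$. Isomorphic algebras have isomorphic replicated algebras, so Theorem \ref{thm:replicated-staircase} yields \eqref{eq:replicated-dyck-below} as you intended. The finite-global-dimension input for Ladkani's theorem is unaffected.
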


\begin{proof}
By Proposition \ref{prop:dyck-staircase}, the below-diagonal Dyck poset is the
coordinate staircase $\Omega_{a,b}$.  Applying
Theorem \ref{thm:replicated-staircase} gives
\eqref{eq:replicated-dyck-below}.

By Lemma \ref{lem:above-below}, the incidence algebra in the
above-diagonal convention is the opposite of the one in the
below-diagonal convention.  Moreover,
$\kk\ovr A_r^{\op}\cong\kk\ovr A_r$, by reversing the order of
the vertices of the line.  Apply
\eqref{eq:replicated-dyck-below} to opposite algebras and use
Lemma \ref{lem:replicated-op}:
\[
 ((B^{\mathrm{st}}_{a,b})^{\op})^{(r)}
 \cong
((B^{\mathrm{st}}_{a,b})^{(r)})^{\op}.
\]
This proves \eqref{eq:replicated-dyck-above}.
\end{proof}

If $a=1$ or $b=1$, the corresponding replicated statement reduces
directly to the identity equivalence
\[
 \Db{\kk\ovr A_r\otimes\kk}
 \simeq
 \Db{\kk\ovr A_r}.
\]

\begin{remark}
When $\gcd(a,b)>1$, \eqref{eq:noncoprime-corner} remains valid.  A later
equivalence with the full lattice $L_{a,b}$ would require a different
cyclic input, because short cyclic orbits prevent a direct use of Xing's
uniform coprime window.  This motivates the future construction of an
orbit-weighted presilting candidate with the correct number of summands;
generation in arbitrary parameters remains an open problem.
\end{remark}


\begin{thebibliography}{99}

\bibitem{CLR}
F.~Chapoton, S.~Ladkani, and B.~Rognerud,
\emph{On derived equivalences for categories of generalized intervals of a
finite poset},
arXiv:1801.05154, 2018.

\bibitem{DiDedda}
I.~Di Dedda,
\emph{Symplectic higher Auslander correspondence for type $A$},
Quantum Topol. (2026), published online first,
doi:10.4171/QT/253.

\bibitem{DJL}
T.~Dyckerhoff, G.~Jasso, and Y.~Lekili,
\emph{The symplectic geometry of higher Auslander algebras: symmetric
products of disks},
Forum Math. Sigma \textbf{9} (2021), Paper No.~e10, 49~pp.
doi:10.1017/fms.2021.2.

\bibitem{Goguet}
M.~Goguet,
\emph{Derived equivalence of posets of torsion classes},
arXiv:2606.21239v1, 2026.

\bibitem{Gottesman}
T.~Gottesman,
\emph{Fractionally Calabi--Yau lattices that tilt to higher Auslander
algebras of type $A$},
Adv. Math. \textbf{488} (2026), Article~110785,
doi:10.1016/j.aim.2026.110785.

\bibitem{Iyama2007}
O.~Iyama,
\emph{Auslander correspondence},
Adv. Math. \textbf{210} (2007), no.~1, 51--82.

\bibitem{Iyama2011}
O.~Iyama,
\emph{Cluster tilting for higher Auslander algebras},
Adv. Math. \textbf{226} (2011), no.~1, 1--61.

\bibitem{JassoKulshammer}
G.~Jasso and J.~K\"ulshammer,
\emph{Higher Nakayama algebras I: Construction},
Adv. Math. \textbf{351} (2019), 1139--1200.
\href{https://doi.org/10.1016/j.aim.2019.05.026}
{doi:10.1016/j.aim.2019.05.026}.



\bibitem{Ladkani}
S.~Ladkani,
\emph{On derived equivalences of lines, rectangles and triangles},
J. Lond. Math. Soc. (2) \textbf{87} (2013), no.~1, 157--176.
doi:10.1112/jlms/jds034.

\bibitem{MinamotoYamaura}
H.~Minamoto and K.~Yamaura,
\emph{Homological dimension formulas for trivial extension algebras},
J. Pure Appl. Algebra \textbf{224} (2020), no.~8, Article~106344, 30~pp.,
doi:10.1016/j.jpaa.2020.106344.

\bibitem{OppermannThomas}
S.~Oppermann and H.~Thomas,
\emph{Higher-dimensional cluster combinatorics and representation theory},
J. Eur. Math. Soc. \textbf{14} (2012), no.~6, 1679--1737.

\bibitem{Rickard1989}
J.~Rickard,
\emph{Morita theory for derived categories},
J. Lond. Math. Soc. (2) \textbf{39} (1989), no.~3, 436--456.


\bibitem{Rickard1991}
J.~Rickard,
Derived equivalences as derived functors,
\emph{J. London Math. Soc. (2)}
\textbf{43} (1991), no.~1, 37--48.

\bibitem{Xing}
W.~Xing,
\emph{Replicated algebras derived equivalent to higher Auslander algebras
of type $A$},
arXiv:2511.22655, 2025.

\end{thebibliography}
\end{document}